\titleclass{\subsubsubsection}{straight}[\subsection]
\newcounter{subsubsubsection}[subsubsection]
\renewcommand\thesubsubsubsection{\thesubsubsection.\arabic{subsubsubsection}}
\renewcommand\paragraph{\@startsection{paragraph}{5}{\z@}%
 {3.25ex \@plus1ex \@minus.2ex}%
 {-1em}%
 {\normalfont\normalsize\bfseries}}
\renewcommand\subparagraph{\@startsection{subparagraph}{6}{\z@}
 {3.25ex \@plus1ex \@minus .2ex}%
 {-1em}%
 {\normalfont\normalsize\bfseries}}
\def\toclevel@subsubsubsection{4}
\def\toclevel@paragraph{5}
\def\toclevel@paragraph{6}
\def\l@subsubsubsection{\@dottedtocline{4}{7em}{4em}}
\def\l@paragraph{\@dottedtocline{5}{10em}{5em}}
\def\l@subparagraph{\@dottedtocline{6}{14em}{6em}}
\newcommand{\C}{\mathbb{C}}
\newcommand{\Z}{\mathbb{Z}}
\newcommand{\Q}{\mathbb{Q}}
\newcommand\GL{\mathrm{GL}_2(\mathbb{Z})}
\numberwithin{equation}{section}
\newtheorem{lemma}{Lemma}[section]
\newtheorem{theorem}[lemma]{Theorem}
\newtheorem{prop}[lemma]{Proposition}
\newtheorem{corollary}[lemma]{Corollary}
\newtheorem{remark}[lemma]{Remark}
\newtheorem{example}[lemma]{Example}
\newtheorem{definition}[lemma]{Definition}
\newtheorem{thmx}{Theorem}
\title{\vspace{-\baselineskip}\sffamily\bfseries Binary forms with the same value set I}
\author[1]{\'Etienne Fouvry\thanks{CNRS, Laboratoire de math\' ematiques d'Orsay, Universit\' e Paris--Saclay, 91405 Orsay, France, etienne.fouvry@universite-paris-saclay.fr}}
\author[2]{Peter Koymans\thanks{Institute for Theoretical Studies, ETH Zurich, 8092 Zurich, Switzerland, peter.koymans@eth-its.ethz.ch}}
\affil[1]{Universit\'e Paris--Saclay}
\affil[2]{ETH Zurich}
\date{\today}
\begin{document}
\maketitle

\begin{abstract}
Given a binary form $F \in \Z[X, Y]$, we define its value set to be $\{F(x, y) : (x, y) \in \Z^2\}$. Let $F, G \in \Z[X, Y]$ be two binary forms of degree $d \geq 3$ and with non-zero discriminant. In a series of three papers, we will give necessary and sufficient conditions on $F$ and $G$ to have the same value set. These conditions will be entirely in terms of the automorphism groups of the forms. 

In this paper, we will build the general theory that reduces the problem to a question about lattice coverings of $\Z^2$, and we solve this problem when $F$ and $G$ have a small automorphism group. The larger automorphism groups $D_4$ and $D_3, D_6$ will respectively be treated in part II and part III.
\end{abstract}

\section{Introduction}
\subsection{Origin of the problem} 
The idea of this work emerged when studying the work of Watson \cite{Wa1} where it is shown that if two positive definite binary quadratic forms $F$ and $G$ with real (not necessarily integral) coefficients have the same value set, which means $F(\Z^2) = G (\Z^2)$, and are not ${\rm GL}(2, \Z)$--equivalent, then they respectively are ${\rm GL}(2, \Z)$--equivalent to
\begin{equation}
\label{Del-Wat}
c(X^2+XY+Y^2) \text { and } c(X^2+3Y^2),
\end{equation}
for some real number $c > 0$. Actually, Watson \cite{Wa2} realized that this result was already known to Delone (see for instance \cite[Theorem p.163]{De}). This is the reason why we would like to give the name {\it Delone--Watson forms} to the forms appearing in \eqref{Del-Wat}.

The case of indefinite binary quadratic forms, with integral coefficients and with equal images apparently seems to have not been discussed in the literature set apart from the work of Schering giving general criteria for two forms, definite or indefinite, to have the same value set (\cite{Sch}, also mentioned in \cite[p. 26]{Di}). The case of indefinite quadratic forms exhibits completely different situations as it will be shown in \cite{FoKo} by appealing to recent statistics on the class groups of real quadratic fields.

In this paper we are concerned with binary forms with degree at least $3$. For a precise statement of our results, we introduce the following notations and definitions.

If $d \geq 1$ is an integer, we denote by ${\rm Bin}(d, \Q)$ the set of binary forms $F = F(X, Y)$ with degree $d$, with rational coefficients and with discriminant different from zero. The group ${\rm GL}(2, \Q)$ operates on ${\rm Bin}(d, \Q)$ by the natural change of variables: if $\gamma = \begin{pmatrix} a & b \\ c& d \end{pmatrix}$ belongs to ${\rm GL}(2, \Q)$ then, by definition
$$
(F \circ \gamma)(X, Y) := F(aX + bY, cX + dY).
$$
The {\it value set} (or {\it image}) of the form $F$ is by definition $F(\Z^2) := \{F(x, y) : (x, y) \in \Z^2\}$ and it satisfies the equality
\begin{equation}
\label{actionofgamma}
(F \circ \gamma)(\Z^2) = F(\Z^2)
\end{equation}
for every $\gamma \in {\rm GL}(2, \Z)$. In this paper, we are concerned with the following question, which in some sense can be interpreted as the converse of \eqref{actionofgamma} or an extension of the Delone--Watson result to higher degrees:
\vskip .2cm
\textit{Let $d \geq 3$ and let $F$ and $G$ be two forms of ${\rm Bin}(d, \Q)$ such that $F(\Z^2) = G(\Z^2)$. Does there exist $\gamma \in {\rm GL}(2, \Z)$
such that}
\begin{equation} 
\label{G= Fogamma}
G = F \circ \gamma\, ?
\end{equation}
We can always suppose that $F$ and $G$ both belong to ${\rm Bin}(d, \Z)$, by multiplication by some integer different from zero, to eliminate the denominators.

\begin{definition}
\label{defextraordinary} 
Let $d \geq 3$. A form $F \in {\rm Bin}(d, \Q)$ is said to be {\it extraordinary} if there exists $G \in {\rm Bin}(d, \Q)$ satisfying $G(\Z^2) = F(\Z^2)$ and no $\gamma \in {\rm GL}(2, \Z) $ satisfying $G = F \circ \gamma$. A form of ${\rm Bin}(d, \Z)$, which is not extraordinary, is called {\it ordinary}.
\end{definition} 

In Definition \ref{defextraordinary}, the word {\it extraordinary} has been chosen to indicate the scarcity of these forms, as it will appear below. The purpose of this paper is to classify extraordinary forms entirely in terms of elementary invariants attached to the form. It is more convenient to express the results in terms of the following two equivalence classes. For $F, G \in {\rm Bin}(d, \Q)$, we say that

\begin{itemize}
\item $F$ and $G$ have the {\it same value set} or {\it the same image} if and only if one has $F(\Z^2) = G(\Z^2)$. This equivalence relation is denoted by $F \sim_{{\rm val}} G$ and the equivalence class of $F$ in ${\rm Bin}(d, \Q)$ is denoted by $[F]_{\rm val}$,
\item $F$ and $G$ are ${\rm GL}(2, \Q)$--{\it equivalent} (resp. ${\rm GL}(2, \Z)$--{\it equivalent}) if and only if there exists $\gamma$ in ${\rm GL}(2, \Q)$ (resp. $ {\rm GL}(2, \Z)$) such that $G= F \circ \gamma$. This equivalence relation is denoted by $ \sim_{{\rm GL}(2, \Q)} $ (resp. $ \sim_{{\rm GL}(2, \Z)} $) and the corresponding equivalence class is denoted by $[F]_{{\rm GL}(2, \Q)}$ (resp. $[F]_{{\rm GL}(2, \Z)}$).
\end{itemize}

\noindent With these notations \eqref{actionofgamma} is equivalent to the inclusion
$$
[F ]_{{\rm GL}(2, \Z)} \subseteq [F]_{\rm val},
$$
and $F$ is extraordinary if and only if one has the strict inclusion
$$
[F ]_{{\rm GL}(2, \Z)} \subsetneq [F]_{\rm val}.
$$
To state our results when $d \geq 3$, we introduce some important algebraic notions attached to a binary form $F$: an element $\gamma \in {\rm GL}(2, \Q)$ is a $\Q$--{\it automorphism of the form $F$ }if it satisfies the equality $F \circ \gamma = F$. The set of all $\Q$--automorphisms of $F$ forms the {\it group of automorphisms}, denoted ${\rm Aut}(F, \Q)$. We also have the conjugation formula
\begin{equation}
\label{conjugationformula}
{\rm Aut}(F \circ \lambda, \Q) = \lambda^{-1} {\rm Aut}(F, \Q) \, \lambda, \text{ for all } \lambda \in {\rm GL}(2, \Q).
\end{equation}
Similarly, we speak of {\it $\Z$--automorphism of the form $F$} and of the group ${\rm Aut}(F, \Z)$, which is a subgroup of ${\rm Aut}(F, \Q)$. The formula \eqref{conjugationformula} remains true when replacing $\Q$ by $\Z$.

If $G$ and $H$ are two subgroups of ${\rm GL}(2, \Q)$, the notation
$$
G \simeq_{{\rm GL}(2, \Q)} H\ (\text {resp. } G \simeq_{{\rm GL}(2, \Z)} H)
$$
means that there exists an element $k$ of ${\rm GL}(2, \Q)$ (resp. ${\rm GL}(2, \Z)$) such that $G = kH k^{-1}$. If it is the case, we say that $G$ and $H$ are {\it ${\rm GL}(2, \Q)$--conjugate }(resp. {\it ${\rm GL}(2, \Z)$--conjugate}). Similarly, we speak of elements of ${\rm GL}(2, \Q)$ which are {\it ${\rm GL}(2, \Q)$--conjugate} (resp. {\it ${\rm GL}(2, \Z)$--conjugate}) and we use the same notation as for subgroups.

In Lemma \ref{possibleAut} below, we explicitly give the complete list $\mathfrak H$ of ten subgroups of ${\rm GL}(2, \Z)$ with the property that for every $F \in {\rm Bin}(d, \Q)$, the group ${\rm Aut}(F, \Q)$ is ${\rm GL}(2, \Q)$--conjugate to a unique element of $\mathfrak H$. The presentation of our results and of our proofs mandate the following partition of $\mathfrak H$ into three subsets 
\begin{equation}
\label{frakH}
\mathfrak H = \mathfrak H_1 \sqcup \mathfrak H_2 \sqcup \mathfrak H_3,
\end{equation}
with
\begin{gather*}
\mathfrak H_1 := \{{\bf C}_1, {\bf C}_2, {\bf C}_3, {\bf C}_4, {\bf C}_6, {\bf D}_1, {\bf D}_2\}, \\
\mathfrak H_2 := \{ {\bf D}_4\}, \\
\mathfrak H_3 := \{{\bf D}_3, {\bf D}_6\}.
\end{gather*}
In the present paper, we are mainly interested with the possible existence of extraordinary forms $F$ when ${\rm Aut}(F, \Q)$ is ${\rm GL}(2, \Q)$--conjugate to one of the seven groups of the list $\mathfrak H_1$. The cases $\mathfrak H_2$ and $\mathfrak H_3$ require more delicate techniques concerning sets of lattices covering $\Z^2$. They will be treated in two forthcoming works \cite{FKD4} and \cite{FKD3D6}, for comments, see \S \ref{othercases} and \S \ref{strategy}.

For $i \in \{1, 2, 3\}$, we introduce the following condition concerning a form $F$ of ${\rm Bin}(d, \Q)$
$$
(Ci) : \text{ there exists } H \in \mathfrak H_i \text{ such that } {\rm Aut}(F, \Q) \simeq_{{\rm GL}(2, \Q)} H.
$$

\subsection{Statement of the results}
Here is the first characterization of extraordinary forms. Actually this corollary remains true if one replaces the condition $(C1)$ concerning ${\rm Aut}(F, \Q)$ by $(C2)$ or $(C3)$. This will be respectively shown in the two forthcoming papers \cite{FKD4} and \cite{FKD3D6}. We have

\begin{corollary}
\label{illustration1} 
Let $d \geq 3$. Suppose that $F \in {\rm Bin}(d, \Q)$ satisfies the condition $(C1)$. Then $F$ is extraordinary if and only if ${\rm Aut}(F, \Q)$ contains an element 
\begin{equation}
\label{writingsigma}
\sigma =\begin{pmatrix}
a &b \\ c& d
\end{pmatrix}
\end{equation}
with the following properties
\begin{enumerate}
\item the order of $\sigma$ is equal to $3$,
\item the quadruple $(a, b, c, d)$ of rational numbers satisfies one of the four properties
\begin{enumerate}
\item \label{1} $(a, b, c, d) \in \Z^4$,
\item \label{2} $(a, d) \in \Z^2$ and $(b, c) \in \Z \times \left(\Z + \frac{1}{2}\right)$,
\item \label{3} $(a, d) \in \Z^2$ and $(b, c) \in \left(\Z + \frac{1}{2}\right) \times \Z$,
\item \label{4} $(a, b, c, d) \in \left(\Z + \frac{1}{2}\right)^4$.
\end{enumerate}
\end{enumerate}
\end{corollary}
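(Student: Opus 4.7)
The approach is to specialize the general lattice-covering theorem developed earlier in this paper — which characterizes extraordinariness of $F$ via a lattice $\Lambda \subset \Q^2$ satisfying $F(\Lambda) = F(\Z^2)$ but not equivalent to $\Z^2$ modulo ${\rm Aut}(F,\Q)$ and ${\rm GL}(2,\Z)$ — to the case of small automorphism groups, using the classification in Lemma~\ref{possibleAut}. Under $(C1)$, ${\rm Aut}(F,\Q)$ is one of the seven groups of $\mathfrak{H}_1$, which severely restricts the admissible covering data.

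\emph{Forward direction.} Starting from $F$ extraordinary with witness $G = F \circ \gamma$, $\gamma \in {\rm GL}(2,\Q) \setminus {\rm Aut}(F,\Q)\cdot {\rm GL}(2,\Z)$, the general theorem translates the equality $F(\gamma\Z^2) = F(\Z^2)$ into the existence of an element $\sigma \in {\rm Aut}(F,\Q)$ that cyclically permutes a family of index-$2$ sublattices of $\Z^2$ whose union is all of $\Z^2$. Since the only non-trivial cyclic permutation of the three index-$2$ sublattices of $\Z^2$ has order $3$, and the would-be order-$4$ or order-$6$ analogues belong to $\mathfrak{H}_2, \mathfrak{H}_3$, the element $\sigma$ must have order exactly $3$. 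Matching the coset of $\gamma$ modulo ${\rm Aut}(F,\Q)\cdot {\rm GL}(2,\Z)$ against the parity of the entries of $\sigma = \gamma g \gamma^{-1}$ for order-$3$ $g \in {\rm GL}(2,\Z)$ then yields exactly the four cases (a)--(d).

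\emph{Converse.} Given $\sigma \in {\rm Aut}(F,\Q)$ of order $3$ whose entries satisfy (a), (b), (c), or (d), I construct a matching $\gamma \in {\rm GL}(2,\Q)$ — for instance $\gamma = {\rm diag}(1,2)$ in case (a), and analogous denominator-$2$ matrices such as $\gamma = {\rm diag}(2,1)$ or $\gamma = \tfrac12\begin{pmatrix}1 & 1 \\ 1 & -1\end{pmatrix}$ in the other cases — for which the three lattices $\gamma\Z^2, \sigma\gamma\Z^2, \sigma^2\gamma\Z^2$ realize the three distinct index-$2$ sublattices of $\Z^2$ (or of a common overlattice). Their union covers $\Z^2$, and ${\rm Aut}(F,\Q)$-invariance of $F$ under $\sigma$ forces $F(\gamma\Z^2) = F(\Z^2)$. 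A direct check, using $(C1)$ to ensure ${\rm Aut}(F,\Q)$ is small enough not to absorb $\gamma$, gives $\gamma \notin {\rm Aut}(F,\Q)\cdot {\rm GL}(2,\Z)$, so $F$ is extraordinary.

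The principal obstacle I expect is the forward direction: ruling out hidden symmetries of orders $2$, $4$, and $6$ across the seven groups of $\mathfrak{H}_1$, and establishing the precise correspondence between the four admissible cosets of $\gamma$ (modulo ${\rm Aut}(F,\Q)\cdot {\rm GL}(2,\Z)$) and the four parity patterns (a)--(d). This boils down to an elementary but careful finite case analysis, leveraging the fact that the order-$3$ subgroup of ${\rm GL}(2,\mathbb{F}_2) \cong S_3$ pins down the permutation action on index-$2$ sublattices.
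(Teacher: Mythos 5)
Your plan is essentially the paper's own route: the hard (\emph{only if}) direction is delegated to the lattice-covering machinery (Theorem \ref{source}, equivalently Corollary \ref{illustration2}), after which the four parity patterns \eqref{1}--\eqref{4} come from conjugating an integral order-$3$ matrix by the determinant-$\pm 2$ witness together with unimodular matrices, and the (\emph{if}) direction is the Delone--Watson covering trick of Lemma \ref{433}.

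Three points in your sketch need repair, though none is fatal. First, in the converse, your justification that $\gamma \notin {\rm Aut}(F, \Q)\cdot {\rm GL}(2, \Z)$ ``because $(C1)$ keeps ${\rm Aut}(F, \Q)$ small'' is not the right reason and is not needed: by \eqref{invarianceofarea} every $\Q$--automorphism of $F$ has determinant of absolute value $1$, so every element of ${\rm Aut}(F, \Q)\cdot {\rm GL}(2, \Z)$ does as well, while $\vert \det \gamma\vert \in \{2, 1/2\}$; this settles non-equivalence independently of $(C1)$. Second, your sample witnesses in the half-integral cases are not all admissible as stated: for instance, for $\sigma = \begin{pmatrix} 0 & 2 \\ -1/2 & -1 \end{pmatrix}$ (an order-$3$ matrix of type \eqref{2}) the choice $\gamma = \begin{pmatrix} 2 & 0 \\ 0 & 1\end{pmatrix}$ fails, because $\sigma$ stabilizes $2\Z \times \Z$, so the three lattices $\gamma \Z^2$, $\sigma \gamma \Z^2$, $\sigma^2 \gamma \Z^2$ coincide and do not cover $\Z^2$. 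The correct move, as in the paper, is to pass to the overlattice: take $\gamma = \begin{pmatrix} 1 & 0 \\ 0 & 1/2 \end{pmatrix}$ or $\begin{pmatrix} 1/2 & 0 \\ 0 & 1\end{pmatrix}$ according to which of $b$, $c$ is half-integral (after the shear $\begin{pmatrix} 1 & 1 \\ 0 & 1 \end{pmatrix}$ in case \eqref{4}); then $\sigma$ becomes integral in the new basis and one is reduced to case \eqref{1}. Third, in the forward direction, the assertion that extraordinariness yields ``an order-$3$ element permuting three index-$2$ sublattices covering $\Z^2$'' is not a direct translation of the covering statement (Proposition \ref{proposition6.1}): passing from a covering by the lattices $L(\gamma\sigma_1)$ to the three index-$2$ sublattices and the normal form $\begin{pmatrix} 1 & 0 \\ 0 & 2\end{pmatrix}$ is the bulk of the paper's work (minimal-index choice of $\rho$, Smith normal form, the P\'olya--Bhargava Lemma \ref{Lemma5.6}, and the exclusion of $(D, \nu) = (2, 4)$). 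So your argument is valid exactly insofar as it cites Theorem \ref{source} or Corollary \ref{illustration2} as a black box, which is indeed how the paper itself deduces the present corollary; if you intended to rederive that step from the covering proposition alone, the sketch hides the main difficulty.
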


Since $\sigma$ is of order $3$ if and only if $a + d = -1$ and $ad - bc =1$, each of the four possibilities \eqref{1}, \eqref{2}, \eqref{3} and \eqref{4} hold infinitely often. The only groups of the list $\mathfrak H_1$ with cardinality divisible by $3$ are ${\bf C}_3$ and ${\bf C}_6$, thus we directly obtain the next consequence.

\begin{corollary} 
Let $d \geq 3$. There is no extraordinary form $F \in {\rm Bin}(d, \Q)$ such that ${\rm Aut}(F, \Q)$ is ${\rm GL}(2, \Q)$--conjugate to ${\bf C}_1$, ${\bf C}_2,$ ${\bf C}_4,$ ${\bf D}_1$ or ${\bf D}_2$.
\end{corollary}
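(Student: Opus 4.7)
The plan is to derive this statement as a direct consequence of Corollary \ref{illustration1}. First I observe that the hypothesis places us in the setting of condition $(C1)$: each of the five groups $\mathbf{C}_1, \mathbf{C}_2, \mathbf{C}_4, \mathbf{D}_1, \mathbf{D}_2$ belongs to $\mathfrak{H}_1$, so if $\mathrm{Aut}(F, \Q)$ is $\mathrm{GL}(2, \Q)$--conjugate to one of them, then $F$ satisfies $(C1)$ and Corollary \ref{illustration1} applies.

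Next I record the elementary fact that these five groups have orders $1, 2, 4, 2, 4$ respectively, none of which is divisible by $3$. Since $\mathrm{GL}(2, \Q)$--conjugation preserves cardinality, the same holds for $\mathrm{Aut}(F, \Q)$. By Lagrange's theorem, $\mathrm{Aut}(F, \Q)$ cannot contain any element of order $3$.

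Consequently, the existence of an element $\sigma \in \mathrm{Aut}(F, \Q)$ of order exactly $3$ required by Corollary \ref{illustration1} fails, so the necessary condition for $F$ to be extraordinary is not met. Therefore $F$ is ordinary, which is exactly the claim. There is no real obstacle in this argument; everything reduces to the order computation for the finite subgroups listed in $\mathfrak{H}_1$, which is precisely the remark already made in the paragraph preceding the statement of the corollary.
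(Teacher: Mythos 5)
Your proposal is correct and is essentially the paper's own argument: the corollary follows from Corollary \ref{illustration1} because the five groups ${\bf C}_1$, ${\bf C}_2$, ${\bf C}_4$, ${\bf D}_1$, ${\bf D}_2$ in $\mathfrak H_1$ have orders $1, 2, 4, 2, 4$, none divisible by $3$, so by Lagrange ${\rm Aut}(F, \Q)$ contains no element of order $3$ and the necessary condition for extraordinariness fails. This matches the remark the authors make immediately before the corollary.
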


Let $F$ be an extraordinary form. In \S \ref{theifpart}, we give a construction of a form $G$ satisfying $F(\Z^2) = G(\Z^2)$, but $G \not \sim_{{\rm GL}(2, \Z)} F$. The formula depends on the condition $\eqref{1}, \dots, \eqref{4}$ satisfied by the form $F$. 

The next criterion is the companion of Corollary \ref{illustration1}. It gives an explicit decomposition of $[F]_{\rm val}$ in terms of distinct equivalence classes for $\sim_{{\rm GL}(2, \Z)}$. When $F$ is extraordinary, this number of ${{\rm GL}(2, \Z)}$--equivalence classes is always two (a situation to be compared with \eqref{Del-Wat}).

\begin{corollary} 
\label{illustration2} 
Let $d \geq 3$. A form $F \in {\rm Bin}(d, \Z)$ satisfying $(C1)$ is extraordinary if and only if there exists $G \in [F]_{\rm val}$ such that
\begin{equation}
\label{268}
\{\sigma \in {\rm Aut}(G, \Q) : \sigma^3 = {\rm id}\} \subseteq {\rm GL}(2, \Z) \textup{ and } 3 \mid \vert {\rm Aut}(G, \Q)\vert. 
\end{equation}
Furthermore, we have a decomposition into two disjoint classes $\sim_{{\rm GL}(2, \Z)}$:
\begin{equation}
\label{269}
[F]_{\rm val} = [G]_{{\rm GL}(2, \Z)} \cup [H]_{{\rm GL}(2, \Z)},
\end{equation}
where $H$ is the binary form $H(X, Y) = G(2X, Y)$.
\end{corollary}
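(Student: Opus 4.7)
The plan is to translate the ``order-3 element in one of four patterns'' criterion of Corollary \ref{illustration1} into the ``integer order-3 automorphism'' criterion \eqref{268}, by conjugating $F$ by a well-chosen $\lambda \in {\rm GL}(2, \Q)$ that rectifies $\sigma$ to an integer matrix while preserving the value set.

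For the forward implication, Corollary \ref{illustration1} supplies $\sigma = \left(\begin{smallmatrix} a & b \\ c & d \end{smallmatrix}\right) \in {\rm Aut}(F, \Q)$ of order $3$ in one of the patterns \eqref{1}--\eqref{4}; a short arithmetic check using $a + d = -1$ and $ad - bc = 1$ moreover forces $b \in 2\Z$ in pattern \eqref{2} and $c \in 2\Z$ in pattern \eqref{3}. I would then choose $\lambda$ case-by-case: $\lambda = I$ in \eqref{1}, $\lambda = {\rm diag}(1, 1/2)$ in \eqref{2}, $\lambda = {\rm diag}(1/2, 1)$ in \eqref{3}, and $\lambda = \left(\begin{smallmatrix} 1 & 1/2 \\ 0 & 1/2 \end{smallmatrix}\right)$ in \eqref{4}, so that $\lambda^{-1}\sigma\lambda \in {\rm GL}(2, \Z)$ in each case (using precisely the parity constraints above). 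Setting $G := F \circ \lambda$, formula \eqref{conjugationformula} then makes both halves of \eqref{268} immediate, modulo the equality $G(\Z^2) = F(\Z^2)$, i.e.\ $F(\lambda \Z^2) = F(\Z^2)$. By construction $\lambda \Z^2$ is $\sigma$-stable, and both inclusions would be obtained by combining the $\sigma$-invariance $F(v) = F(\sigma v) = F(\sigma^2 v)$ with the identity $v_1 + (\sigma v)_1 + (\sigma^2 v)_1 = 0$ coming from $\sigma^2 + \sigma + I = 0$, which prevents all three first coordinates from being simultaneously odd. Each of the four patterns demands a small bespoke parity analysis of this kind.

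For the reverse implication, given $G$ satisfying \eqref{268}, I would set $H(X, Y) = G(2X, Y)$. The same ``sum-is-zero'' parity argument, now applied to the integer order-3 element $\tau \in {\rm Aut}(G, \Q)$, yields $G(\Z^2) = G(2\Z \times \Z) = H(\Z^2)$, whence $H \in [F]_{\rm val}$. The inequivalence $G \not\sim_{{\rm GL}(2, \Z)} H$ follows because $H = G \circ {\rm diag}(2, 1)$ and any element of ${\rm Aut}(G, \Q)$ has determinant $\pm 1$ (via the transformation law ${\rm Disc}(F \circ \mu) = (\det \mu)^{d(d-1)} {\rm Disc}(F)$ together with $d \geq 3$), while $|\det({\rm diag}(2, 1) \cdot \gamma)| = 2$ for every $\gamma \in {\rm GL}(2, \Z)$. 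Thus $[F]_{\rm val}$ contains two distinct ${\rm GL}(2, \Z)$-classes, which forces $F$ to be extraordinary.

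The main obstacle is the exhaustiveness in the decomposition \eqref{269}, namely that $[F]_{\rm val}$ contains no third ${\rm GL}(2, \Z)$-class. This does not follow from elementary manipulations but from the paper's general reduction of the value-set problem to a lattice-covering condition on $\Z^2$; under hypothesis $(C1)$ the small size and explicit structure of the groups in $\mathfrak H_1$ make the resulting covering analysis tractable and force exactly two classes.
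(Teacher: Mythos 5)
Your ``if'' direction and the construction of $H$ are sound and essentially reproduce the paper's Lemma \ref{433}: the integral order-$3$ element gives $G(\Z^2)=H(\Z^2)$ (your parity argument via ${\rm id}+\sigma+\sigma^2=0$ is a neat shortcut compared with the paper's explicit computation of the entries of $\sigma$), and the determinant/discriminant argument correctly rules out $G \sim_{{\rm GL}(2,\Z)} H$, so $F$ is extraordinary. The problem lies entirely in the other half of the statement, and there the proposal has a genuine gap in two places. First, your forward implication (``$F$ extraordinary $\Rightarrow$ there exists $G\in[F]_{\rm val}$ satisfying \eqref{268}'') is obtained by invoking the ``only if'' half of Corollary \ref{illustration1}; but in this paper Corollary \ref{illustration1} is itself deduced from Corollary \ref{illustration2} (see \S 3), and both rest on Theorem \ref{source}. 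So, as written, the hard direction of the corollary is proved by citing a statement whose only available proof presupposes the corollary --- a circularity, unless you supply an independent argument. That independent argument is exactly the deep content of the paper: Lemma \ref{existenceofrho} to get a rational isomorphism, the Smith-normal-form reduction to the pair $(G_1,G_2)$ with $G_2(DX,DY)=G_1(X,\nu Y)$, the counting of common values via lines on the surface $\mathbb W(G_1\cap G_2)$ (Propositions \ref{984}, \ref{984*}, \ref{proposition6.1}), and the classification of coverings of $\Z^2$ by two or three lattices (Lemmas \ref{2lattices}, \ref{3lattices}), culminating in Theorem \ref{source}, which is what actually produces a $G$ with integral order-$3$ automorphisms and $3\mid\vert{\rm Aut}(G,\Q)\vert$.

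Second, the exhaustiveness in \eqref{269} --- that $[F]_{\rm val}$ contains no third ${\rm GL}(2,\Z)$-class --- is acknowledged by you as ``the main obstacle'' but not proved; you only gesture at the lattice-covering machinery. In the paper this step is a short consequence of Theorem \ref{source}: if $F_1,F_2,F_3\in[F]_{\rm val}$ were pairwise inequivalent, the connecting elements $\gamma_1,\gamma_2\in{\rm GL}(2,\Q)$ would each have $\vert\det\gamma_i\vert\in\{1/2,2\}$, and so would $\gamma_1\gamma_2$, which is impossible by multiplicativity of the determinant. So the two genuinely nontrivial assertions of Corollary \ref{illustration2} --- existence of $G$ satisfying \eqref{268} when $F$ is extraordinary, and the two-class decomposition \eqref{269} --- are respectively handled by a circular citation and by deferral; to repair the proposal you would need to route both through Theorem \ref{source} (or reprove its content), as the paper does in \S 4.
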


Here and also in Theorem \ref{source} below, we will replace the condition $(C1)$ by $(C2)$ or $(C3)$ in \cite {FKD4} and \cite{FKD3D6}. Actually, these two corollaries are consequences of the following theorem, the proof of which occupies the most important part of this paper.

\begin{theorem}
\label{source}
Let $d \geq 3$. Let $F_1 \in {\rm Bin}(d, \Q)$ be an extraordinary form satisfying $(C1)$. Let $F_2 \in {\rm Bin}(d, \Q)$ be such that 
\begin{equation}
\label{hypoforF1F2}
F_1 \sim_{\rm val} F_2 \textup{ and } F_1 \not \sim_{{\rm GL}(2, \Z)} F_2.
\end{equation} 
Then we have $3 \mid \vert{\rm Aut}(F_1, \Q)\vert$. Furthermore, there exist two forms $G_1$ and $G_2$ such that $G_i \sim_{{\rm GL}(2, \Z)} F_i$ and such that
\begin{equation}
\label{G1 = G2}
G_1(X, 2Y) = G_2(X, Y) \textup{ or } G_1(X, 2Y) = G_2(2X, 2Y).
\end{equation}
In the first case we have
$$
\{\sigma \in {\rm Aut}(G_1, \Q) : \sigma^3 = {\rm id}\} \subseteq {\rm GL}(2, \Z),
$$
and in the second case, we have
$$
\{\sigma \in {\rm Aut}(G_2, \Q) : \sigma^3 = {\rm id}\} \subseteq {\rm GL}(2, \Z).
$$
\end{theorem}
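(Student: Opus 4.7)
The plan is as follows. Using a general reduction step that I expect has been established earlier in the paper (relying on the degree $d\geq 3$ and the density/distribution of the value set), the hypothesis $F_1 \sim_{\rm val} F_2$ forces $F_1 \sim_{{\rm GL}(2,\Q)} F_2$; hence I may write $F_2 = F_1 \circ \lambda$ for some $\lambda \in {\rm GL}(2,\Q)$. The value-set equality then becomes the lattice identity
$$
F_1(\lambda \Z^2) = F_1(\Z^2),
$$
while $F_1 \not\sim_{{\rm GL}(2,\Z)} F_2$ says precisely that $\lambda \notin {\rm Aut}(F_1,\Q)\cdot{\rm GL}(2,\Z)$. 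The theorem is thus a classification, modulo this double coset, of the matrices $\lambda$ that equalize the $F_1$-values on the two lattices $\Z^2$ and $\lambda\Z^2$.

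Next I exploit the two natural freedoms. Replacing $F_2$ by $F_2\circ\mu^{-1}$ for $\mu\in{\rm GL}(2,\Z)$ preserves the $\Z$-equivalence class of $F_2$ and substitutes $\lambda\mu^{-1}$ for $\lambda$; replacing $F_1$ by $F_1\circ\sigma = F_1$ for $\sigma\in{\rm Aut}(F_1,\Q)$ substitutes $\sigma^{-1}\lambda$ for $\lambda$. Using elementary-divisor theory on $\lambda$ (viewed as having a common denominator $n$), I normalize $\lambda$ so that $\Z^2$ and $\lambda\Z^2$ sit in a canonical relative position, with a single integer $n\geq 2$ measuring the obstruction to $\Z$-equivalence. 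The two alternatives $G_1(X,2Y)=G_2(X,Y)$ and $G_1(X,2Y)=G_2(2X,2Y)$ in \eqref{G1 = G2} will correspond to whether the normalized $\lambda$ has $|\det\lambda| = 1/2$ or $|\det\lambda| = 2$, i.e.\ on which side of the equivalence the denominator $2$ appears.

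The heart of the proof is then a case analysis over the seven groups of $\mathfrak H_1$ combined with a prime-by-prime analysis of $n$. For each prime $p \mid n$, the lattice $\lambda\Z^2$ contains points of $p$-adic denominator that must nevertheless map under $F_1$ to integers already attained on $\Z^2$. Since the fibres of $F_1$ on $\Z^2$ are finite and organised as orbits of ${\rm Aut}(F_1,\Z)$, the abundance of such coincidences (forced by degree $\geq 3$ and the covering condition) strictly constrains both $n$ and the symmetry of $F_1$. The enumeration shows that the only admissible configuration is $p=2$, $n=2$, with ${\rm Aut}(F_1,\Q)$ containing an element of order $3$, yielding $3\mid|{\rm Aut}(F_1,\Q)|$. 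The integrality claim for the order-$3$ elements of ${\rm Aut}(G_i,\Q)$ then drops out of the normalization: any order-$3$ automorphism of $G_i$ must stabilise the integral lattice appearing on its side of \eqref{G1 = G2}, which forces its matrix entries to be in $\Z$.

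I expect the main obstacle to be the negative half of this case analysis: proving that when ${\rm Aut}(F_1,\Q)$ belongs to $\{{\bf C}_1,{\bf C}_2,{\bf C}_4,{\bf D}_1,{\bf D}_2\}$, no non-trivial $\lambda$ can equalize the value sets. This requires using very precisely the explicit ${\rm GL}(2,\Q)$-conjugacy representatives of $\mathfrak H_1$ furnished by Lemma \ref{possibleAut}, together with a sharp arithmetic count of the coincidences $F_1(v)=F_1(v')$ with $v\in\Z^2$ and $v'\in\lambda\Z^2\setminus\Z^2$, to show that without an order-$3$ symmetry there are too few fibre matchings to cover all of $F_1(\Z^2)$. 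The positive half (constructing the covering when an order-$3$ automorphism is present) should then be a direct verification using the explicit form of $\sigma$ and the matrix $\mathrm{diag}(1,2)$.
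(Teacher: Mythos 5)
Your skeleton matches the paper's (value-set equality forces ${\rm GL}(2,\Q)$-equivalence via the earlier lemmas; Smith normal form plus the double-coset freedom to normalize $\lambda$; a case analysis over $\mathfrak H_1$), but the heart of your plan has a genuine gap. You write that ``the fibres of $F_1$ on $\Z^2$ are finite and organised as orbits of ${\rm Aut}(F_1,\Z)$'' and that ``the abundance of such coincidences\dots strictly constrains both $n$ and the symmetry of $F_1$.'' Fibres are \emph{not} automorphism orbits in general; they are so only for \emph{almost all} represented integers (the ``essentially represented'' values, a result of Stewart--Xiao), and converting the value-set equality into a usable structural statement is exactly the hard step. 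The paper does this via a covering proposition: if $G_1=G_2\circ\gamma$ and $G_1(\Z^2)=G_2(\Z^2)$, then $\Z^2=\bigcup_{\sigma\in{\rm Aut}(G_1,\Q)}L(\gamma\sigma)$ (and the three companion identities), where $L(\cdot)$ is the lattice of integral preimages. Its proof needs the classification of lines on the surface $G_1(X,Y)-G_2(Z,T)=0$ by isomorphisms of the forms (Boissi\`ere--Sarti), Salberger's bound $O(B^{\xi_d})$, $\xi_d<2$, for points of bounded height off the lines, and the essential-representation count --- none of which your ``sharp arithmetic count of coincidences'' supplies or replaces. Without this covering statement you have no mechanism to force an order-$3$ symmetry, nor the combinatorial lemmas (a covering of $\Z^2$ by two lattices has a trivial member; by three proper lattices it must be the three index-$2$ sublattices) that pin down the configuration.

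Two further specific points would fail as stated. First, your dichotomy ``$|\det\lambda|=1/2$ or $2$'' does not exhaust what must be excluded: after normalization the candidate $\gamma=\mathrm{diag}(2,1/2)$ (the case $(D,\nu)=(2,4)$ in the paper's notation) has determinant $1$ yet is not integral, and ruling it out is a separate, nontrivial argument (the paper excludes it through index computations in canonical form, explicit test points such as $(1,0)$ and $(0,1)$, and, in the subcase where the order-$3$ automorphism is already integral, by invoking the equivalence $G_2(2X,Y)\sim_{{\rm GL}(2,\Z)}G_2(X,2Y)$ to contradict $G_1\not\sim_{{\rm GL}(2,\Z)}G_2$). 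Second, your claim that the integrality of the order-$3$ automorphisms ``drops out of the normalization'' because such an automorphism ``must stabilise the integral lattice'' is unsubstantiated --- there is no a priori reason an order-$3$ element of ${\rm Aut}(G_i,\Q)$ preserves $\Z^2$; in the paper this is deduced from the covering identity by writing the automorphism in canonical form $\frac1a\begin{pmatrix}a_1&a_2\\a_3&a_4\end{pmatrix}$, computing indices of the lattices $L(\gamma^{-1}\tau)$, $L(\gamma^{-1}\tau^2)$, and eliminating the case $a=2$ with an explicit lattice point. So the plan identifies the right landmarks but omits the analytic-geometric engine and the matrix-level case analysis that actually carry the proof.
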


\begin{remark} 
The second case case of \eqref{G1 = G2} is symmetrical with the first one: the relation $G_1(X, 2Y) = G_2(2X, 2Y)$ is equivalent to $G_1(X, Y) = G_2(2X, Y)$. Let, for $i \in \{1, 2\}$, $H_i$ be the binary form
$$
H_i = G_i \circ \begin{pmatrix} 0 &1 \\ 1 & 0 \end{pmatrix}.
$$ 
The $H_i$ are ${\rm GL}(2, \Z)$--equivalent to the $G_i$ and they satisfy the equality
$$
H_2(X, 2Y) = H_1(X, Y).
$$
We recognize the first case of \eqref{G1 = G2} after inverting the indices.
\end{remark}

It is also natural to wonder what happens for \emph{coprime value sets}. This question was also considered by Watson \cite[p. 73]{Wa1}, who observed that the Delone--Watson forms have different coprime value sets. 

More formally, define
\[
W(F) = \{F(x, y) : x, y \in \Z, \gcd(x, y) = 1\}.
\]
This induces an equivalence relation $\sim_{\text{co}}$ on the set of binary forms by setting $F \sim_{\text{co}} G$ if $W(F) = W(G)$. We have inclusions
\begin{align}
\label{eCoprimeValue}
[F]_{\GL} \subseteq [F]_{\text{co}} \subseteq [F]_{\text{val}}.
\end{align}
The first inclusion uses that $\GL$ permutes the set of primitive elements of $\Z^2$, while the second inclusion uses homogeneity. The following result shows that the situation becomes completely rigid for coprime value sets.

\begin{theorem}
\label{tCoprimeValue}
Let $d \geq 3$ and let $F \in {\rm Bin}(d, \Q)$. Then we have $[F]_{\GL} = [F]_{\textup{co}}$.
\end{theorem}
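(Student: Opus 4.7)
The inclusion $[F]_{\GL} \subseteq [F]_{\textup{co}}$ is already recorded in \eqref{eCoprimeValue}, so the work lies in proving the reverse inclusion. Suppose $F, G \in \mathrm{Bin}(d, \Z)$ (after clearing denominators) satisfy $W(F) = W(G)$; the goal is $F \sim_{\GL} G$. A first observation upgrades $W(F) = W(G)$ to $F \sim_{\textup{val}} G$: each nonzero $(x,y) \in \Z^2$ writes uniquely as $k(x',y')$ with $k \geq 1$ and $(x',y')$ primitive, so homogeneity gives $F(\Z^2) = \{0\} \cup \bigcup_{k \geq 1} k^d\, W(F)$, and similarly for $G$.

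Now assume for contradiction $F \not\sim_{\GL} G$. Then $F$ is extraordinary with witness $G$, and I invoke Theorem \ref{source} (case $(C1)$) together with the analogous structural theorems for $(C2)$ and $(C3)$ from \cite{FKD4, FKD3D6}. This produces $G_1 \sim_{\GL} F$ and $G_2 \sim_{\GL} G$ with $G_1(X, 2Y) = G_2(X, Y)$ (the other case $G_1(X, Y) = G_2(2X, Y)$ reduces to this via the swap $(X, Y) \leftrightarrow (Y, X)$, which preserves $W$) and the further property that the order-$3$ elements of $\Aut(G_1, \Q)$ all lie in $\GL$. For $\mathbf{C}_3$ this already gives $\Aut(G_1, \Q) \subseteq \GL$; for $\mathbf{C}_6$ the generator $\tau$ of order $6$ has $\textup{tr}\,\tau = \det\tau = 1$, so Cayley--Hamilton yields $\tau = \tau^2 + I$, and $\tau^2 \in \GL$ forces $\tau \in \GL$. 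Thus $\Aut(G_1, \Q) \subseteq \GL$, and since $\GL$--equivalence preserves $W$, $W(G_1) = W(G_2)$.

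Write $W(G_1) = W^{\textup{ev}}(G_1) \cup W^{\textup{odd}}(G_1)$, split according to the parity of the second coordinate of a primitive representing pair. Parameterizing primitive $(u, v)$ by the parity of $u$ — when $u$ is odd, $(u, 2v)$ is itself primitive; when $u = 2u'$ is even, $v$ is necessarily odd and $G_1(u, 2v) = 2^d G_1(u', v)$ with $(u', v)$ primitive — yields the clean identity
\[
W(G_2) \;=\; \{G_1(u, 2v) : \gcd(u, v) = 1\} \;=\; W^{\textup{ev}}(G_1) \cup 2^d\, W^{\textup{odd}}(G_1).
\]
Combined with $W(G_1) = W^{\textup{ev}}(G_1) \cup W^{\textup{odd}}(G_1)$, the assumed equality $W(G_1) = W(G_2)$ forces in particular $2^d W^{\textup{odd}}(G_1) \subseteq W(G_1)$. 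To contradict this, I pick a primitive $(u, v)$ with $v$ odd and $|G_1(u, v)|$ sufficiently large. By Evertse's uniform version of Thue's theorem, the integer solutions of $G_1(x, y) = 2^d G_1(u, v)$ are, for all but finitely many such $(u, v)$, contained in the orbit $\{\pm\sigma(2u, 2v) : \sigma \in \Aut(G_1, \Q)\} = \{\pm 2\sigma(u, v) : \sigma \in \Aut(G_1, \Q)\}$; since $\Aut(G_1, \Q) \subseteq \GL$, each $\sigma(u, v)$ is primitive, so every element of this orbit lies in $2\Z^2$ and is \emph{not} primitive. Hence $2^d G_1(u, v) \notin W(G_1)$, contradicting $W(G_1) = W(G_2)$.

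The main obstacle is the appeal to Evertse's uniform Thue bound in order to rule out ``sporadic'' primitive representations of $2^d G_1(u, v)$ outside the expected orbit of $(2u, 2v)$; once that input is in place the parity argument is elementary. A secondary, conceptual point is the reduction to the ``nice'' representative $G_1$ with $\Aut(G_1, \Q) \subseteq \GL$, which is what makes the orbit consist entirely of non-primitive vectors and therefore makes the contradiction sharp.
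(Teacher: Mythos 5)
Your reduction to $G_1(X,2Y)=G_2(X,Y)$ and the parity identity $W(G_2)=W^{\textup{ev}}(G_1)\cup 2^d\,W^{\textup{odd}}(G_1)$ are sound, and your endgame is genuinely different from the paper's (the paper iterates the doubling step to produce primitive solutions of $F(x,y)=2^{(N-1)d}m_0$ for every $N$ and then invokes the Thue--Mahler finiteness theorem of Lewis--Mahler \cite{LM}). However, two steps of your endgame are not justified. The appeal to ``Evertse's uniform version of Thue's theorem'' does not give what you use: Evertse and Bombieri--Schmidt \cite{E, BS} bound the \emph{number} of primitive solutions of $G_1(x,y)=m$; they do not say that for all but finitely many $(u,v)$ every integral solution of $G_1(x,y)=2^dG_1(u,v)$ lies in the orbit $\{\sigma(2u,2v):\sigma\in\Aut(G_1,\Q)\}$. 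The relevant notion is essential representation (Lemma \ref{notunique}, via \cite[Lemma 2.4]{SX}), and that is only a counting statement: the represented integers up to $X$ that are not essentially represented number $O(X^\alpha)$ with $\alpha<2/d$, a set which may perfectly well be infinite. To make your argument work you would need a lower bound $\gg X^{2/d-\varepsilon}$ for the number of values $2^dG_1(u,v)$ with $(u,v)$ primitive and $v$ odd (a primitivity-plus-congruence refinement of Lemma \ref{879}) so as to exhibit at least one good value; as written, the existence of even one suitable $(u,v)$ is not established.

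The second and more serious gap is the claim $\Aut(G_1,\Q)\subseteq\GL$. You prove it only when the automorphism group is conjugate to ${\bf C}_3$ or ${\bf C}_6$ (the Cayley--Hamilton step for ${\bf C}_6$ is correct), but Theorem \ref{source} and its extension to $(C3)$ assert integrality only for the \emph{order-$3$} elements; in the ${\bf D}_3$ and ${\bf D}_6$ cases the order-$2$ reflections need not lie in $\GL$, and your final contradiction collapses there: a non-integral $\sigma\in\Aut(G_1,\Q)$ with $2\sigma(u,v)\in\Z^2$ can produce a \emph{primitive} representation of $2^dG_1(u,v)$, so the orbit of $(2u,2v)$ is no longer contained in $2\Z^2$. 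The paper's proof is arranged precisely to avoid both issues: it needs only one integral automorphism of order $3$, used to adjust the parity of a primitive representing pair as in Lemma \ref{433}, after which the iteration and \cite{LM} give the contradiction with no essential-representation input and no hypothesis on the rest of $\Aut(G_1,\Q)$.
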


In words, a binary form is completely determined (up to $\GL$) by its coprime value set. Our proof of Theorem \ref{tCoprimeValue} will rest in an essential way on our results for $\sim_{\text{val}}$. We will prove Theorem \ref{tCoprimeValue} in \S \ref{sCoprime}.

A second variation of the notion of equality of value sets concerns {\it value sets with multiplicities} that was suggested to us by Michel Waldschmidt. For $F \in {\rm Bin}(d, \Z)$ and $m$ an integer different from zero, we introduce the finite number
$$
R(F; m) := \vert \{(x, y) \in \Z^2: F(x, y) = m\} \vert.
$$
We say the forms $F$ and $G\in {\rm Bin}(d, \Z)$ have the {\it same value sets with multiplicities} if for every $m\neq 0$, one has the equality
$$
R(F;m) =R (G; m).
$$
If this condition is satisfied, we write $F \sim_{{\rm mu}} G$ and the analogue of \eqref{eCoprimeValue} is the following straightforward inclusions between equivalence classes
$$
[F]_{\GL} \subseteq [F]_{\text{mu}} \subseteq [F]_{\text{val}}.
$$
Here also the situation becomes completely rigid for value sets with multiplicities.

\begin{theorem}
\label{tMultiValue}
Let $d \geq 3$ and let $F \in {\rm Bin}(d, \Z)$. Then we have $[F]_{\GL} = [F]_{\textup{mu}}$.
\end{theorem}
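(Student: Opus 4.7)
The plan is to bootstrap from Theorem \ref{source} (and its extensions to conditions $(C2)$ and $(C3)$ promised in \cite{FKD4} and \cite{FKD3D6}). The inclusions $[F]_{\GL} \subseteq [F]_{\text{mu}} \subseteq [F]_{\text{val}}$ already noted reduce the problem to showing $[F]_{\text{mu}} \subseteq [F]_{\GL}$. So I pick $G \in [F]_{\text{mu}}$ and aim to prove $G \sim_{\GL} F$. The passage to value sets is free: $F \sim_{\text{mu}} G$ implies $F \sim_{\text{val}} G$. If $F \sim_{\GL} G$ we are done, so I may assume $F$ is extraordinary.

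Under this assumption, I invoke Theorem \ref{source} (plus its analogues for $(C2)$, $(C3)$) with $F_1 = F$ and $F_2 = G$. This supplies integral forms $G_1 \sim_{\GL} F$ and $G_2 \sim_{\GL} G$ satisfying either
$$
G_1(X, 2Y) = G_2(X, Y) \quad \textup{ or } \quad G_1(X, Y) = G_2(2X, Y),
$$
the second relation being the rewriting of the second case of \eqref{G1 = G2} indicated in the remark. Because $\sim_{\text{mu}}$ is invariant under the $\GL$-action (which merely permutes $\Z^2$), the hypothesis $F \sim_{\text{mu}} G$ transports to $G_1 \sim_{\text{mu}} G_2$, that is, $R(G_1; m) = R(G_2; m)$ for every nonzero integer $m$.

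In the first case, the identity $G_2(x, y) = G_1(x, 2y)$ shows that $(x, y) \mapsto (x, 2y)$ injects the representations of $m$ by $G_2$ into those of $m$ by $G_1$, its image being exactly the representations with even second coordinate. The equality $R(G_1; m) = R(G_2; m)$ then forces every representation of every $m \neq 0$ by $G_1$ to have an even second coordinate. But $G_1$ has nonzero discriminant, so the univariate polynomial $a \mapsto G_1(a, 1)$ has only finitely many roots; pick $a_0 \in \Z$ with $m_0 := G_1(a_0, 1) \neq 0$. Then $(a_0, 1)$ is a representation of $m_0$ by $G_1$ with odd second coordinate, contradicting what we just proved. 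The second case is symmetric: pick $b_0$ with $G_2(1, b_0) \neq 0$, and the relation $G_1(x, y) = G_2(2x, y)$ forces every representation of any $m$ by $G_2$ to have even first coordinate, again contradicted by $(1, b_0)$.

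I do not anticipate a genuine obstacle here: once the structural classification of extraordinary forms is in hand, the argument is a short bookkeeping translation of the multiplicity equality into a parity constraint on coordinates, which cannot be satisfied by a form of nonzero discriminant. The only point that requires a little care is the rewriting of the second case of \eqref{G1 = G2}, together with the observation that both $G_1$ and $G_2$ remain in $\textup{Bin}(d, \Z)$, so that the representation counts on the two sides of the comparison are genuinely counting integer points.
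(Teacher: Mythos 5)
Your proposal is correct and follows essentially the same route as the paper: reduce via Theorem \ref{source} (together with Theorems \ref{A} and \ref{B}) to the relation $G(X,Y)=F(X,2Y)$ up to $\GL$-equivalence, then observe that $(x,y)\mapsto(x,2y)$ injects the representations of $m$ by one form into those by the other, and derive a contradiction from a representation with odd second coordinate (the paper picks $(x_0,y_0)$ with $y_0$ odd, you pick $(a_0,1)$). The only cosmetic difference is that you treat the two cases of \eqref{G1 = G2} symmetrically rather than interchanging $F$ and $G$ at the outset.
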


This means that a binary form is completely determined (up to $\GL$) by its value set with multiplicities. The proof of Theorem \ref{tMultiValue} will rest in an essential way on our results for $\sim_{\text{val}}$. We will prove Theorem \ref{tMultiValue} in \S \ref{sMulti}.
 
\subsection{Some comments and illustrations} 
We will frequently meet the matrix
\begin{equation}
\label{defR}
R := \begin{pmatrix} 0& 1\\-1 & -1
\end{pmatrix}.
\end{equation}
This matrix is of order $3$ and it generates the cyclic subgroup of ${\rm GL}(2, \Z)$ called ${\bf C}_3 := \{{\rm id}, R, R^2\}$, see Lemma \ref{possibleAut}.

\begin{example}
We return to the Delone--Watson forms appearing in \eqref{Del-Wat}, with $c = 1$. We respectively call them $F$ and $G$. By computing their discriminants, these forms are not ${\rm GL}(2, \Z)$--equivalent. We have the equalities
$$
(G \circ R^2) (X, Y) = (-X-Y)^2 + 3X^2 = 4X^2 +2XY +Y^2 =: H(X, Y)
$$
with $H(X, Y) = F(2X, Y)$. So we have $G \sim_{{\rm GL}(2, \Z)} H$ and $F \not \sim_{{\rm GL}(2, \Z)} H$. To prove that $F(\Z^2) = G(\Z^2)$, it is sufficient to prove that $F(\Z^2) = H(\Z^2)$. We obviously have $H(\Z^2) \subseteq F(\Z^2)$. For the opposite inclusion, we use the fact that $F = F \circ R = F \circ R^2 $, which implies that, for any integers $m$ and $n$, we have the equalities
\begin{equation}
\label{G= G= G}
F(m, n) = F(n, -m - n) = F(-m - n, m).
\end{equation}
For any $(m, n) \in \Z^2$, at least one of the integers $m$, $n$ and $-m-n$ is even, thus we can always write $F(m, n) = F(2m', n') = F(m', n')$ for some integers $m'$ and $n'$. So we have the equality $F(\Z^2) = H(\Z^2)$. By \eqref{Del-Wat} we have the decomposition 
$$
[F]_{\rm val} = [G]_{\rm val} = [H]_{\rm val} = [F]_{{\rm GL}(2, \Z)} \cup [H]_{{\rm GL}(2, \Z)}.
$$
Such a decomposition, in degree $d = 2$, shows the analogy between the content of Corollary \ref{illustration2} in degree $d \geq 3$ and the Delone--Watson result.
\end{example}

\begin{example}
\label{example1.7}
Now we pass to the case where $d = 3$. A direct computation shows that a binary form $F$ over $\Q$ with degree $3$ or $-\infty$ is stable by the action of ${\bf C}_3$ if and only if it is of the form
$$
F(X, Y) = F_{a, b}(X, Y) := aX^3 + b X^2Y + (b - 3a) XY^2 - a Y^3 \quad (a, b \in \Q),
$$
see for example \cite[p.818]{S}. Its discriminant is equal to $(9a^2 - 3ab + b^2)^2$. Choose $a = 1$ and $b = 0$ so that\footnote{The polynomial $F_{1, m}(x, 1)$ is the polynomial $f_m(x)$ introduced in \cite[p. 712]{FrIwMaRu} in a different context to parametrize cubic cyclic number fields such that the subgroup of totally positive units coincides with the subgroup of square of units. This idea has its origin in a paper of Shanks \cite{Sh}.}
\begin{equation}
\label{F10}
F_{1, 0} (X, Y) = X^3 -3XY^2 -Y^3.
\end{equation}
We check that $F_{1, 0}$ belongs to ${\rm Bin}(3, \Z)$ and that we have $F_{1, 0} \circ R = F_{1, 0}$. So we have the inclusion 
\begin{equation}
\label{inclusionAutC3}
{\mathbf C}_3 \subseteq {\rm Aut}(F_{1, 0}, \Q).
\end{equation} 
According to the inclusion \eqref{inclusionAutC3} and the list $\mathfrak H$ (see \eqref{frakH}), the group ${\rm Aut}(F_{1, 0}, \Q)$ can only be ${\rm GL}(2, \Q)$--conjugate to $\mathbf C_3$, $\mathbf C_6$, $\mathbf D_3$ or $\mathbf D_6$. Actually, \eqref{inclusionAutC3} may be refined to the equality
\begin{equation}
\label{Aut=C3}
{\rm Aut}(F_{1, 0}, \Q) = {\bf C}_3
\end{equation}
thanks to the following remark: any subgroup which is ${\rm GL}(2, \Q)$--conjugate with the groups ${\bf C}_6$, ${\bf D}_3$ and ${\bf D}_6$ contains an element of order two. But if ${\rm Aut}(F_{1, 0}, \Q)$ had an element $\gamma = \begin{pmatrix} a & b\\ c& d \end{pmatrix}$ with order two, the associated homography
$$
z \in \mathbb{P}^1(\mathbb C) \mapsto \mathfrak{h}(z) = \frac{az + b}{cz + d},
$$ 
considered as a bijection of the set of roots of the polynomial $F_{1, 0} (z,1) = z^3 -3z-1$, would satisfy the equality
$$
\mathfrak{h}(\zeta) = \zeta,
$$
for some root $\zeta$. So $\zeta$ would be rational or quadratic. But elementary considerations on the polynomial $F_{1, 0}(z, 1)$ show that all its roots have degree three. This gives a contradiction and \eqref{Aut=C3} is proved. 
 
We apply Corollary \ref{illustration2} with the natural choice $G = F_{1, 0}$. Let $H(X, Y) := G(2X, Y)$. By considering the discriminant, we see that the form $H$ is not ${\rm GL}(2, \Z)$--equivalent to $G$. Since $G = G \circ R = G \circ R^2$, we deduce, similarly to the proof of \eqref{G= G= G}, that the following equality
$$
G(\Z^2) = H(\Z^2)
$$
holds. By Corollary \ref{illustration2}, we deduce that $[F_{1, 0}]_{\rm val}$ breaks into exactly two equivalence classes under $\sim_{{\rm GL}(2, \Z)}$:
$$
[F_{1, 0}]_{\rm val} = [F_{1, 0}]_{{\rm GL}(2, \Z)} \cup [H]_{{\rm GL}(2, \Z)}.
$$
\end{example}

\begin{example} 
Consider now the cubic form $F(X, Y) = F_{2, 1}(X, Y) = 2X^3 +X^2Y -5XY^2 -2Y^3$. By the same type of remarks leading to \eqref{Aut=C3}, we deduce that $F$ is extraordinary. Starting from $F$, we build the two forms
$$
G(X, Y) := \frac{1}{2} F(X, Y),
$$
and 
$$
H(X, Y) := \frac{1}{2} F(2X, Y) = 8X^3 +2X^2Y -5 XY^2 -Y^3.
$$
Reasoning as in Example \ref{example1.7}, we deduce that the pair $(G,H)$ is an example of two extraordinary forms with equal images, but one of these forms has integer coefficients and the other one has a non-integral coefficient.
\end{example} 

\begin{example} 
The most studied binary form of degree $3$ is certainly the form $F(X, Y) = X^3 +Y^3$. We have ${\rm Aut}(F, \Q) = {\bf D}_1$, which is a group with two elements, see Lemma \ref{possibleAut}. By Corollary \ref{illustration1}, we deduce that the value set of a binary form is the set of sums of two cubes if and only if this form is ${\rm GL}(2, \Z)$--equivalent to $F$.
 
More generally, we find a complete description of the group ${\rm Aut}(aX^d + bY^d, \Q)$ in \cite[Lemma 3.3]{SX}, where $d \geq 3$ and $a$ and $b$ are integers of any sign with $ab \neq 0$. We check that the cardinality of this group is never divisible by $3$ and we deduce that the forms $aX^d + bY^d$ are always ordinary.
\end{example} 
 
\begin{example}
The last example in degree $d = 3$ concerns a perturbation of $F_{1, 0}$ (see \eqref{F10}) by the matrix $\rho = \begin{pmatrix} 4 & 0 \\0 & 1\end{pmatrix}$. Let 
$$ 
J(X, Y) := (F_{1, 0} \circ \rho) (X, Y) = 64 X^3 -12 XY^2 -Y^3.
$$
By the conjugacy property \eqref{conjugationformula} and the equality \eqref{Aut=C3}, we have
$$
{\rm Aut}(J, \Q) = \rho^{-1} {\bf C}_3 \rho = \left\{{\rm id}, \begin{pmatrix} 0 & 1/4 \\ -4 & -1\end{pmatrix}, \begin{pmatrix} -1 & -1/4 \\ 4 & 0 \end{pmatrix} \right\} =: \left\{ {\rm id}, \sigma, \sigma^2\right\}.
$$
Neither $\sigma$ nor $\sigma^2$ satisfies one of the conditions $\eqref{1}, \dots, \eqref{4}$ of Corollary \ref{illustration1}. Thus the form $J$ is ordinary.
\end{example}

It is natural to wonder if there are ``infinitely many different'' extraordinary forms. We make this precise in the following extension of Definition \ref{defextraordinary}.

\begin{definition}
Let $d \geq 3$. A form $F \in {\rm Bin}(d, \Q)$ is said to be {\it extraordinary} if there exists $G \in {\rm Bin}(d, \Q)$ satisfying $G(\Z^2) = F(\Z^2)$ and no $\gamma \in {\rm GL}(2, \Z) $ satisfying $G = F \circ \gamma$. The pair $(F, G)$ is called a pair of linked extraordinary forms.

Let $a\in \Q^*$, $\gamma \in {\rm GL}(2, \Z)$ and $F \in {\rm Bin}(d, \Q)$. Then the form 
$$
a(F \circ \gamma)
$$
is extraordinary if and only if $F$ is extraordinary. Let $(F, G)$ and $(F', G')$ be two pairs of linked extraordinary forms. These two pairs are called equivalent if there exists $a \in \Q^*$ and $\gamma, \delta \in {\rm GL}(2, \Z)$ such that 
$$ 
F' = a F \circ \gamma \textup{ and } G'= a G \circ \delta. 
$$
\end{definition} 

By \eqref{Del-Wat}, we know that there exist only two non-equivalent pairs of linked extraordinary forms, if one restricts to the set of definite (positive or negative) quadratic forms. As we prove below, this is not the case in the set of binary cubic forms.

\begin{corollary} 
There are infinitely many non-equivalent pairs of linked extraordinary forms in ${\rm Bin}(3, \Q)$.
\end{corollary}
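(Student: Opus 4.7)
The plan is to exhibit a one-parameter $\Z$-indexed family of extraordinary cubic forms coming from Example \ref{example1.7} and then separate infinitely many equivalence classes of pairs by using the discriminant modulo $(\Q^*)^4$ as an invariant of the pair equivalence.

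For each $b \in \Z$, set
$$
F_b(X, Y) := F_{1, b}(X, Y) = X^3 + b X^2 Y + (b-3) X Y^2 - Y^3,
$$
which lies in ${\rm Bin}(3, \Z)$ because its discriminant $(b^2 - 3b + 9)^2$ is a positive integer. It is stabilised by $R$, so $\mathbf{C}_3 \subseteq {\rm Aut}(F_b, \Q)$. The rational root theorem applied to the monic polynomial $F_b(X, 1) = X^3 + bX^2 + (b-3)X - 1$ leaves only $X = \pm 1$ as candidate rational roots, and $F_b(1, 1) = 2b - 3 \neq 0$ together with $F_b(-1, 1) = 1 \neq 0$ for every $b \in \Z$. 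Hence $F_b(X, 1)$ is irreducible over $\Q$ and all its roots have degree $3$. The homography argument of Example \ref{example1.7} then forces ${\rm Aut}(F_b, \Q) = \mathbf{C}_3$, so $F_b$ satisfies condition $(C1)$, and since $R$ has integer entries (case \eqref{1} of Corollary \ref{illustration1}) $F_b$ is extraordinary with linked partner $H_b(X, Y) := F_b(2X, Y)$ provided by Corollary \ref{illustration2}.

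Next I extract an invariant of pair equivalence from the discriminant. Suppose the pairs $(F_{b_1}, H_{b_1})$ and $(F_{b_2}, H_{b_2})$ are equivalent, so there exist $a \in \Q^*$ and $\gamma \in {\rm GL}(2, \Z)$ with $F_{b_2} = a\, F_{b_1} \circ \gamma$. Since $\det(\gamma) = \pm 1$ the transformation rule for the discriminant of a binary cubic gives
$$
(b_2^2 - 3 b_2 + 9)^2 = {\rm Disc}(F_{b_2}) = a^4\, {\rm Disc}(F_{b_1}) = a^4 (b_1^2 - 3 b_1 + 9)^2.
$$
Writing $N(b) := b^2 - 3b + 9 > 0$ and taking positive square roots yields $N(b_2)/N(b_1) = a^2 \in (\Q^*)^2$, so the positive squarefree kernels of $N(b_1)$ and $N(b_2)$ must coincide.

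It therefore suffices to show that as $b$ varies over $\Z$, the squarefree kernel of $N(b)$ takes infinitely many values. Suppose for contradiction that only finitely many squarefree integers $k_1, \ldots, k_r \ge 1$ occur. Then every $b \in \Z$ satisfies $(2b - 3)^2 - 4 k_i s^2 = -27$ for some index $i$ and some $s \in \Z$. If $k_i = 1$ the equation factors as $(2b - 3 - 2s)(2b - 3 + 2s) = -27$ and has only finitely many integer solutions; otherwise $4k_i$ is a non-square and the equation is a generalised Pell equation whose integer solutions split into finitely many orbits under the fundamental unit of $\Z[\sqrt{k_i}]$, each contributing only $O(\log B)$ values of $b$ with $|b| \le B$. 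Summing over $1 \le i \le r$ gives $O(\log B)$ admissible integers in $[-B, B]$, contradicting the presence of $2B + 1$ such integers for $B$ large. The main obstacle lies precisely in this final step: one must rule out that a single Pell orbit secretly absorbs infinitely many $b$, and it is the exponential growth of integer points on a Pell curve that prevents this.
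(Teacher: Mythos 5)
Your proposal is correct, and it follows the paper's construction in its skeleton --- the same family $F_{1,b}$, the same partner $F_{1,b}(2X,Y)$ obtained from Corollary \ref{illustration2}, and the same invariant ${\rm Disc} = (b^2-3b+9)^2$ transformed by $a^4\det(\gamma)^6$ --- but the crucial separation step is handled by a genuinely different argument. The paper invokes classical analytic number theory to produce infinitely many $b$ for which $b^2-3b+9$ is squarefree, and then distinct squarefree discriminant values cannot differ by a fourth power; you instead extract only the squarefree kernel of $N(b)=b^2-3b+9$ as the invariant and prove it takes infinitely many values by an elementary counting argument: if only kernels $k_1,\dots,k_r$ occurred, every $b$ would solve $(2b-3)^2-4k_i s^2=-27$, and each such Pell-type equation has only $O(\log B)$ solutions with $|b|\le B$ (finitely many orbit classes, exponential growth within each class), contradicting the $2B+1$ available integers. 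This buys you independence from the sieve-theoretic input on squarefree values of quadratic polynomials, at the price of a slightly longer finish; it also forces you to verify ${\rm Aut}(F_{1,b},\Q)={\bf C}_3$ for every $b$, which you do cleanly via the rational root theorem (the paper attributes this to Example \ref{example1.7}, which literally treats only $b=0$, so your check is a welcome completion). One cosmetic imprecision: the orbits should be taken under the automorph group of the form $x^2-4k_iy^2$ (equivalently a suitable power of the fundamental unit of $\Z[\sqrt{k_i}]$) rather than the fundamental unit itself, but this does not affect the finiteness of classes or the logarithmic count, so the argument stands.
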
 
 
\begin{proof} 
We adopt the notations of Example \ref{example1.7} to consider the form
$$
\Phi_b(X, Y) := F_{1,b} (X, Y) = X^3 + b X^2 Y + (b - 3) XY^2 -Y^3,
$$
where $b$ is an integer. The discriminant of $\Phi_b$ is the polynomial
$$
{\rm disc}(\Phi_b) = b^2(b - 3)^2 -18b (b - 3) - 27 + 4b^3 - 4(b-3)^3 = (b^2 - 3b + 9)^2.
$$
Since ${\rm disc}(\Phi_b) \neq 0$, we have $\Phi_b \in {\rm Bin}(3, \Z)$. As it was proved in Example \ref{example1.7}, we have ${\rm Aut}(\Phi_b, \Q) ={\bf C}_3.$ Thanks to Corollary \ref{illustration2} we know that $(\Phi_b(X, Y), \Phi_b(2X, Y))$ is a pair of linked extraordinary forms.
 
Let $\gamma \in {\rm GL}(2, \Z)$ and $a \in \Q^*$ be such that $\Phi_{b'} = a \Phi_b \circ \gamma$. By using the transformation formulas of the discriminant, we have
$$
{\rm disc} (\Phi_{b'}) = a^4 \det(\gamma)^{3(3-1)} {\rm disc}(\Phi_b) = a^4 {\rm disc}(\Phi_b).
$$
By classical analytic number theory we may find infinitely many $b$ such that $b^2 - 3b + 9 > 1$ is squarefree. By the transformation formula, the resulting $\Phi_b$ for such $b$ can never be equivalent, and the corollary follows.
\end{proof} 

\subsection{The other cases} 
\label{othercases} 
In \cite{FKD4}, we will prove

\begin{thmx} 
\label{A} 
Let $d \geq 3$. There does not exist an extraordinary form $F \in {\rm Bin}(d, \Q)$ with ${\rm Aut}(F, \Q) \simeq_{{\rm GL}(2, \Q)}{\bf D}_4$.
\end{thmx}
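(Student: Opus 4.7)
The plan is to establish, within the scope of \cite{FKD4}, the analogue of Theorem \ref{source} in which the condition $(C1)$ is replaced by $(C2)$, as the authors announce after the statement of that theorem. Once this analogue is available, Theorem A follows immediately: if $F \in {\rm Bin}(d, \Q)$ were extraordinary and satisfied $(C2)$, then by Definition \ref{defextraordinary} there would exist $F_2 \in {\rm Bin}(d, \Q)$ with $F \sim_{\rm val} F_2$ but $F \not\sim_{{\rm GL}(2, \Z)} F_2$; applied to $(F_1, F_2) := (F, F_2)$, the extended Theorem \ref{source} would force $3 \mid |{\rm Aut}(F, \Q)|$. Since however $(C2)$ means ${\rm Aut}(F, \Q) \simeq_{{\rm GL}(2, \Q)} {\bf D}_4$ and $|{\bf D}_4| = 8$, this is a contradiction, and no such $F$ exists.

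The real work therefore lies in proving the extension of Theorem \ref{source} itself. The broad strategy parallels the $(C1)$ case treated in this paper. After replacing $F_1$ by a ${\rm GL}(2, \Q)$-conjugate via \eqref{conjugationformula}, one may assume ${\bf D}_4 \subseteq {\rm Aut}(F_1, \Z)$. The general reduction built earlier in this paper then translates the hypothesis $F_1 \sim_{\rm val} F_2$ with $F_1 \not\sim_{{\rm GL}(2, \Z)} F_2$ into a covering problem for $\Z^2$: one seeks a finite family of sublattices $\Lambda_i \subseteq \Z^2$ of finite index together with automorphisms $\tau_i \in {\rm Aut}(F_1, \Q)$ satisfying $\Z^2 = \bigcup_i \tau_i \Lambda_i$, plus compatibility conditions that control the rational matrix relating $F_1$ to $F_2$. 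The new ingredient under $(C2)$ is that ${\rm Aut}(F_1, \Q)$ contains the order-$4$ rotation generating the cyclic subgroup of ${\bf D}_4$ together with the four reflections, all of which act simultaneously by permutation on the family of admissible sublattices of a given index.

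The main obstacle will be a careful case analysis of ${\bf D}_4$-equivariant sublattice coverings of $\Z^2$. The target statement is a dichotomy: any admissible covering either is trivial, in which case $F_1 \sim_{{\rm GL}(2, \Z)} F_2$, contradicting extraordinariness, or else it produces an order-$3$ element of ${\rm Aut}(F_1, \Q)$ lying in ${\rm GL}(2, \Z)$ in the sense of Corollary \ref{illustration1}, forcing $3 \mid |{\rm Aut}(F_1, \Q)|$. The delicate point will be to show that the reflections in ${\bf D}_4$ close off the loopholes that the rotational part alone would leave open; the order-$4$ rotation admits its own natural coverings through $2\Z \times \Z$-type sublattices, and the reflections are what rule out the construction of a genuinely new form $F_2$. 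Enumerating the orbits of ${\bf D}_4$ acting on sublattices of $\Z^2$ of small index via their Smith normal form, and matching them against the integrality constraints on the coefficients of $F_2$, is what \cite{FKD4} has to carry out in detail.
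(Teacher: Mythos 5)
Your overall plan---push the problem into the lattice-covering machinery and show that under $(C2)$ no admissible configuration survives---is the strategy the paper itself announces for \cite{FKD4} (see \S\ref{strategy}), but as written your proposal contains a genuine error and leaves the decisive step undone. The error is the reduction ``after replacing $F_1$ by a ${\rm GL}(2,\Q)$--conjugate one may assume ${\bf D}_4\subseteq{\rm Aut}(F_1,\Z)$''. Extraordinariness and the relation $\sim_{\rm val}$ are invariant under ${\rm GL}(2,\Z)$, not under ${\rm GL}(2,\Q)$: composing with a rational matrix changes the value set $F(\Z^2)$, and the paper's own example $J=F_{1,0}\circ\begin{pmatrix}4&0\\0&1\end{pmatrix}$ is an \emph{ordinary} form whose automorphism group is ${\rm GL}(2,\Q)$--conjugate to ${\bf C}_3$, while $F_{1,0}$ itself is extraordinary. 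So you cannot normalize the automorphism group to be integral without losing exactly the forms Theorem \ref{A} is about; the entire difficulty is to control the denominators of the rational automorphisms, which is what the canonical form of Lemma \ref{canonicalmatrix}, the index formula of Lemma \ref{lemma5.10}, and the explicit case analysis of \S\ref{Section11.1}--\S\ref{Section11.2} accomplish in the ${\bf C}_3/{\bf C}_6$ case. Assuming ${\bf D}_4\subseteq{\rm GL}(2,\Z)$ begs the question.

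Second, even granting the setup, your argument stops where the proof has to begin. Since $-{\rm id}\in{\bf D}_4$, Lemma \ref{gamma-gamma} reduces the covering from Proposition \ref{proposition6.1} to at most four lattices, and the substance of \cite{FKD4} is (i) the classification of all coverings of $\Z^2$ by four proper lattices (the paper stresses this alone is non-trivial, and that there are exactly four such coverings) and (ii) the verification that each of them, combined with the minimality constraint \eqref{minimalindexforgamma}, the conditions \eqref{conditionsonDnu}, and the explicit matrices, forces $G_1\sim_{\GL}G_2$ or an arithmetic contradiction. Your proposed dichotomy ``trivial covering, or an order-$3$ element of ${\rm Aut}(F_1,\Q)$ inside ${\rm GL}(2,\Z)$'' is also not the right target statement: ${\bf D}_4$ has no element of order $3$, so the second horn can never occur, and deducing Theorem \ref{A} from a ``$(C2)$ version of Theorem \ref{source}'' is circular in practice---proving that version non-vacuously \emph{is} proving Theorem \ref{A}. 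The paper's logical order is the reverse: Theorem \ref{A} is established directly by the covering analysis, after which the $(C2)$ analogues of Corollaries \ref{illustration1}, \ref{illustration2} and Theorem \ref{source} hold because they are empty.
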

 
Of course, Corollaries \ref{illustration1}, \ref{illustration2} and Theorem \ref{source} remain true if one replaces the condition $(C1)$ by $(C2)$ since they are empty. In \cite{FKD3D6}, we will prove 

\begin{thmx}
\label{B} 
Corollaries \ref{illustration1}, \ref{illustration2} and Theorem \ref{source} remain true if one replaces the condition $(C1)$ by $(C3)$.
\end{thmx}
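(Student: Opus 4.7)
The plan is to reduce Theorem~B to proving the analog of Theorem~\ref{source} when condition $(C1)$ is replaced by $(C3)$. Once that analog is established, Corollaries~\ref{illustration1} and~\ref{illustration2} should follow by the same formal derivation used in this paper for $(C1)$: those derivations pass through the structural conclusion on the order-$3$ element $\sigma$ and the scaling relation \eqref{G1 = G2}, and do not rely on the specific shape of $\operatorname{Aut}(F,\Q)$ beyond what Theorem~\ref{source} provides. So I would concentrate entirely on extending Theorem~\ref{source} to the $\mathbf{D}_3, \mathbf{D}_6$ setting.

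The approach is to apply the general lattice-covering framework built in the first part of this paper. The relation $F_1 \sim_{\mathrm{val}} F_2$ is converted into the existence of finite families of sublattices $\{\Lambda_i^{(1)}\}$ and $\{\Lambda_j^{(2)}\}$ of $\Z^2$ such that $\operatorname{Aut}(F_1,\Q)\cdot\{\Lambda_i^{(1)}\}$ and $\operatorname{Aut}(F_2,\Q)\cdot\{\Lambda_j^{(2)}\}$ jointly cover $\Z^2$. The decisive structural input is that both $\mathbf{D}_3$ and $\mathbf{D}_6$ contain $\mathbf{C}_3$ as a (normal) subgroup, so an order-$3$ element $\sigma$ is automatically available. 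Using the index and volume estimates provided by the framework, I would bound the denominators appearing in $\Lambda_i^{(1)}, \Lambda_j^{(2)}$, and then show that up to $\operatorname{GL}(2,\Z)$-change of variables one is forced into the single covering configuration $(\Z^2, \bigl(\begin{smallmatrix} 2 & 0 \\ 0 & 1\end{smallmatrix}\bigr)\Z^2)$ that already appears for $\mathbf{C}_3$. Tracking the entries of $\sigma$ through this reduction exhibits the four parity patterns \eqref{1}--\eqref{4}, and then a further $\operatorname{GL}(2,\Z)$-conjugation places $\sigma$ into $\operatorname{GL}(2,\Z)$ in the appropriate $G_i$, yielding both \eqref{G1 = G2} and the integrality conclusion.

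The main obstacle is the $\mathbf{D}_6$ case, and for two distinct reasons. First, the group has $12$ elements, so the orbit of a candidate sublattice under $\operatorname{Aut}(F,\Q)$ can be large, and one must carefully enumerate the orbits of small-index sublattices of $\Z^2$ under the $\mathbf{D}_6$-action and check, case by case, that no covering is possible other than the one arising from the $\mathbf{C}_3$-subgroup. Second, $\mathbf{D}_6$ contains $-I$ and three further reflections beyond those in $\mathbf{D}_3$, and these reflections mix the four patterns \eqref{1}--\eqref{4} of Corollary~\ref{illustration1}, so extra bookkeeping is required to ensure that after all reductions the resulting order-$3$ element genuinely lies in $\operatorname{GL}(2,\Z)$ inside the correct $G_i$. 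For $\mathbf{D}_3$ the analysis should be essentially parallel to the argument given here for $\mathbf{C}_3$, with the three reflections identifying what would otherwise be distinct configurations and hence shrinking the search rather than enlarging it.

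Finally, after the extended Theorem~\ref{source} is in hand, the deduction of Corollary~\ref{illustration2} in case $(C3)$ proceeds exactly as in the $(C1)$ case: the splitting \eqref{269} of $[F]_{\mathrm{val}}$ into two $\operatorname{GL}(2,\Z)$-classes is a direct consequence of \eqref{G1 = G2} together with the invariance $G = G \circ \sigma$, since at least one of the three integers $m, n, -m-n$ (transformed through the integral $\sigma$) is even, so $F(\Z^2) = H(\Z^2)$ with $H(X,Y)=G(2X,Y)$; discriminant considerations then prevent $F$ and $H$ from being $\operatorname{GL}(2,\Z)$-equivalent.
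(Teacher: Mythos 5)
Theorem \ref{B} is not proved in this paper at all: it is deferred to the forthcoming part III \cite{FKD3D6}, and the strategy section (\S \ref{strategy}) makes clear where the real work lies, namely in classifying the coverings of $\Z^2$ by up to six proper lattices (there are $9$ possible coverings of cardinality $5$ and $49$ of cardinality $6$, a classification the authors obtain with computer assistance) and then running a delicate case analysis on the explicit lattices $L(\gamma\sigma)$ produced by Proposition \ref{proposition6.1}. Your proposal correctly isolates the formal part: once the $(C3)$ analogue of Theorem \ref{source} is available, Corollaries \ref{illustration1} and \ref{illustration2} follow by the same deductions as in the $(C1)$ case, and the relevant order-$3$ element exists because $\mathbf{C}_3 \subseteq \mathbf{D}_3, \mathbf{D}_6$. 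But the heart of the matter is exactly what you assert rather than prove: you claim that the index and volume estimates force "the single covering configuration" already seen for $\mathbf{C}_3$, yet you give no argument excluding the many other coverings by four, five or six proper lattices that can a priori arise, and this exclusion is precisely the content of part III. (Note also that the $\mathbf{C}_3$ analysis does not end with the pair $(\Z^2, \bigl(\begin{smallmatrix} 2 & 0 \\ 0 & 1\end{smallmatrix}\bigr)\Z^2)$; it forces the covering by the three index-two sublattices of Lemma \ref{3lattices}, and the conclusion $(D,\nu)\in\{(1,2),(2,2)\}$ plus the integrality statement in \eqref{G1 = G2} comes out of a further computation with canonical forms.)

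A second, concrete error is your assessment of $\mathbf{D}_3$. The only identification of lattices used in the paper is $L(-\gamma)=L(\gamma)$ (Lemma \ref{gamma-gamma}); reflections do not identify the lattices $L(\gamma\sigma)$ in general. Since $-\mathrm{id}\notin \mathbf{D}_3$, the covering coming from Proposition \ref{proposition6.1} can genuinely consist of six lattices in the $\mathbf{D}_3$ case, exactly as for $\mathbf{D}_6$ (where $|\mathbf{D}_6|/2=6$). So $\mathbf{D}_3$ is not "essentially parallel to $\mathbf{C}_3$" with a smaller search space; it shares the full six-lattice combinatorial difficulty, which is why the paper treats $\mathbf{D}_3$ and $\mathbf{D}_6$ together in \cite{FKD3D6}. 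As it stands, your proposal is a plausible restatement of the intended strategy, but it leaves the decisive combinatorial classification and case analysis entirely unaddressed, and its one substantive structural prediction (that $\mathbf{D}_3$ is easy) is contradicted by the paper's own counting.
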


\subsection*{Acknowledgements} 
The first author thanks Michel Waldschmidt for inspiring the thema of this paper, for sharing his ideas and for his encouragements. The second author gratefully acknowledges the support of Dr. Max R\"ossler, the Walter Haefner Foundation and the ETH Z\"urich Foundation.

\section{A first list of definitions and lemmas regarding binary forms} 
Let $F$ be a binary form with degree $d \geq 3$. The area of its fundamental domain is
$$
A_F := \iint_{\vert F (x, y) \vert \leq 1} dx dy.
$$
We have $0 < A_F < \infty$. Furthermore, for every $\gamma$ in ${\rm GL}(2, \mathbb R)$, we have the equality
\begin{equation}
\label{invarianceofarea}
A_{F \circ \gamma} = \vert \, \det \gamma\, \vert^{-1} \cdot A_F.
\end{equation}
Our first lemma bounds the number of solutions to $F(x, y) = m$.

\begin{lemma}
\label{EverBomSch} 
Let $d \geq 3$ and $F \in {\rm Bin}(d, \Z)$. Then for every $\varepsilon > 0$, there exists $c = c(F, \varepsilon)$ such that for every integer $m\neq 0$, the number of solutions in unknown integers $(x, y)$ to the equation $F(x, y) = m$ is less than $c \vert m\vert^\varepsilon.$
\end{lemma}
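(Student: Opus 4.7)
The plan is to reduce the estimate to the case of primitive solutions, where one can invoke a classical bound of Evertse for Thue equations (refining results of Thue, Mahler, Bombieri--Schmidt), and then absorb the extra count using the divisor bound $\tau(m) = O_\varepsilon(|m|^\varepsilon)$.

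First I would decompose an arbitrary solution $(x, y) \in \Z^2$ of $F(x, y) = m$ according to $k := \gcd(x, y) \geq 1$. Writing $x = k x_0$, $y = k y_0$ with $\gcd(x_0, y_0) = 1$, the homogeneity of $F$ of degree $d$ gives
$$
F(x_0, y_0) = m / k^d,
$$
so in particular $k^d \mid m$. The number of integers $k \geq 1$ with $k^d \mid m$ is at most $\tau(m)$, the number of positive divisors of $m$, which satisfies $\tau(m) \ll_\varepsilon |m|^\varepsilon$ by the standard divisor estimate.

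Next, for each such $k$, I would bound the number of primitive representations $F(x_0, y_0) = m/k^d$. Since $F$ has degree $d \geq 3$ and non-zero discriminant, this is a Thue equation, and by the theorem of Evertse (Invent. Math. 1984) the number of coprime integer solutions $(x_0, y_0)$ to $F(x_0, y_0) = n$ is at most $C(d) \cdot 2^{\omega(n)}$ where $\omega(n)$ denotes the number of distinct prime divisors of $n$. Using $2^{\omega(n)} \leq \tau(n) \ll_\varepsilon |n|^\varepsilon$, the number of primitive solutions for each fixed $k$ is $O_\varepsilon(|m/k^d|^\varepsilon) \leq O_\varepsilon(|m|^\varepsilon)$.

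Combining the two bounds, the total number of solutions is at most
$$
\sum_{k \,:\, k^d \mid m} O_\varepsilon(|m|^\varepsilon) \ll_\varepsilon \tau(m) \cdot |m|^\varepsilon \ll_\varepsilon |m|^{2\varepsilon},
$$
and replacing $\varepsilon$ by $\varepsilon/2$ yields the stated bound. The main ingredient is Evertse's uniform theorem on Thue equations; the reduction from all solutions to primitive ones via the divisor bound is routine. No other step presents any real obstacle.
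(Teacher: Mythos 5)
Your reduction to primitive solutions via $k=\gcd(x,y)$ and the divisor bound $\tau(m)\ll_\varepsilon |m|^\varepsilon$ is exactly the paper's own second step, and for irreducible $F$ the appeal to Evertse (with the Bombieri--Schmidt refinement) is also what the paper does. The gap is in the sentence ``Since $F$ has degree $d\geq 3$ and non-zero discriminant, this is a Thue equation, and by the theorem of Evertse\dots'': membership in ${\rm Bin}(d,\Z)$ only requires non-zero discriminant, i.e.\ no repeated linear factors, so $F$ may perfectly well be reducible over $\Q$ (e.g.\ $XY(X+Y)$, $X(X^2+Y^2)$, or a product of quadratic forms). The classical counting theorems you invoke are stated for forms irreducible over $\Q$ --- and indeed the paper itself only cites Evertse and Bombieri--Schmidt in the irreducible case --- so your blanket application does not cover reducible $F$. (A smaller point: the bound you quote, $C(d)\cdot 2^{\omega(n)}$, is not the shape of those results, which give bounds of the type $C(d)^{1+\omega(n)}$; this is harmless for the $O_\varepsilon(|n|^\varepsilon)$ conclusion, but the irreducibility hypothesis is not.)

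The reducible case is precisely where the paper spends most of its proof: it factors $F=F_1\cdots F_t$ into distinct irreducible forms; if the factor of largest degree has degree $\geq 3$, the irreducible case is applied to that factor, whose value runs over the $O_\varepsilon(|m|^\varepsilon)$ divisors of $m$; if all factors have degree $\leq 2$ (forcing $t\geq 2$), one bounds the solutions by intersecting the level sets $F_t=m_1$ and $F_{t-1}=m_2$ with $m_1m_2\mid m$ (two distinct conics, a conic and a line, or two distinct lines), giving at most four solutions per pair $(m_1,m_2)$ and again an $O_\varepsilon(|m|^\varepsilon)$ total. Your argument can be repaired either by adding such an elementary treatment of reducible forms, or by citing instead a Thue--Mahler-type theorem valid for binary forms having at least three pairwise non-proportional linear factors over $\overline{\Q}$ (a hypothesis that \emph{is} implied by $d\geq 3$ and non-zero discriminant), but as written the proposal proves the lemma only for irreducible $F$.
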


\begin{proof} 
The proof proceeds in several steps.
\vskip .3cm 
\noindent $\bullet$ If $F$ is irreducible and if we restrict to counting primitive solutions, which means with the restriction ${\rm gcd} (x, y)=1$, 
this is a direct consequence of \cite[Theorem 6.4]{E} and \cite[Theorem]{BS} containing improvements in the constant $c$. 
\vskip .3cm
\noindent $\bullet$ To drop the condition ${\rm gcd} (x, y) =1$, we use the homogeneity of $F$ to consider at most $\tau (\vert m \vert)$ equations of the following form
$$
F(x', y') = m /\kappa^d,
$$
where $\tau (n) $ is the number of positive divisors of the integer $n\geq 1 $, where $\kappa \geq 1$ is an integer satisfying $\kappa^d \mid m$ and where the unknowns $(x', y')$ now are coprime integers. Summing over all the possible $\kappa$ we obtain the result.
\vskip .3cm
\noindent $\bullet$ Suppose now that $F$ is reducible over $\Q$. We factorize it as $F_1= F_1 \cdots F_t$, where $t\geq 2$, where the $F_i$ are distinct and irreducible over $\Q$ and where $1 \leq \deg F_1 \leq \cdots \leq \deg F_t.$
\vskip .2cm
-- If $\deg F_t \geq 3$, the equality $F(x, y) =m$ implies that $F_t (x, y) =m'$, where $m'$ is some positive or negative divisor of $m$. Then apply the above part of the discussion to the equation $F_t (x, y) = m'$ and sum over all the possible $m' \mid m$ to get the required bound.
\vskip .2cm
-- If $\deg F_t \leq 2$, we necessarily have $t \geq 2$. The number of solutions to $F(x, y) =m$ is less than the total number of solutions to the two simultaneous equations $F_t( x, y) =m_1$ and $F_{t-1} (x, y) = m_2$, where the product $m_1m_2$ divides $m$. When $m_1m_2$ is fixed, we find at most four solutions to this pair of equations, since it corresponds to the intersection of two distinct conics, of a conic and a line, or of two distinct lines. Summing over all the possible pairs $(m_1,m_2)$ such that $m_1m_2\mid m$, we complete the proof. 
\end{proof}

Let $X \geq 1$ be a real number. We introduce the counting function
\begin{equation}
\label{defNF}
N(F, X) := \vert \{m: \vert m \vert \leq X, m = F(x_1, x_2) \text{ for some } (x_1, x_2) \in \Z^2\} \vert.
\end{equation}
We will use the following weak information on this function (for a precise study see \cite{SX}).

\begin{lemma}
\label{resultStewartXiaobis}
Let $d \geq 3$ and $F \in {\rm Bin}(d, \Z)$. Then, for every $\varepsilon > 0$, there exists $c(\varepsilon) > 0$ and $B_0(\varepsilon) \geq 1$, such that, for $B \geq B_0(\varepsilon)$ one has the inequality 
$$
N(F, B) \geq c(\varepsilon) B^{2/d-\varepsilon}.
$$
\end{lemma}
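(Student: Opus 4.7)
The plan is to sandwich the value-set counting function $N(F,B)$ between a lower bound on the number of lattice points in the region $|F|\le B$ and the per-value multiplicity bound supplied by Lemma \ref{EverBomSch}. Let $M^\ast(F,B):=|\{(x,y)\in\Z^2:\,1\le |F(x,y)|\le B\}|$. The strategy is to prove $M^\ast(F,B)\gg_F B^{2/d}$, and then, since every nonzero $m$ with $|m|\le B$ contributes at most $c(F,\varepsilon)|m|^\varepsilon\le c(F,\varepsilon)B^\varepsilon$ pairs to $M^\ast(F,B)$, to conclude $N(F,B)\ge M^\ast(F,B)/(c(F,\varepsilon)B^\varepsilon)\gg B^{2/d-\varepsilon}$.

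For the lower bound on $M^\ast(F,B)$ I would avoid the area formula (because $|F|\le B$ is unbounded if $F$ has real linear factors) and argue by the crude bound $|F(x,y)|\le C_F\,\max(|x|,|y|)^d$, where $C_F$ is the sum of absolute values of the coefficients of $F$. Setting $R:=\lfloor(B/C_F)^{1/d}\rfloor$, every pair $(x,y)\in\Z^2$ with $\max(|x|,|y|)\le R$ satisfies $|F(x,y)|\le B$, and there are $(2R+1)^2\asymp B^{2/d}$ such pairs.

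The only subtlety is handling the contribution of the zeros of $F$, which is relevant precisely when $F$ has a linear factor over $\Q$. The zero set $\{F=0\}$ in $\R^2$ is a union of at most $d$ lines through the origin, and the integer points on any one of these lines that lie in the box $\max(|x|,|y|)\le R$ number at most $2R+1$ (and the line contributes no integer points off the origin at all if its slope is irrational). Hence the number of lattice points with $\max(|x|,|y|)\le R$ and $F(x,y)=0$ is $O_F(R)=O_F(B^{1/d})$. Subtracting gives
\[
M^\ast(F,B)\ge (2R+1)^2-O_F(R)\ \gg_F\ B^{2/d}
\]
for $B\ge B_0(F)$.

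Combining with Lemma \ref{EverBomSch} applied to each nonzero value, we obtain
\[
c_1(F)\,B^{2/d}\ \le\ M^\ast(F,B)\ \le\ c(F,\varepsilon)\,B^\varepsilon\,N(F,B),
\]
and hence $N(F,B)\ge c(\varepsilon)\,B^{2/d-\varepsilon}$ for $B\ge B_0(\varepsilon)$, as required. The main (and only) obstacle is the possible presence of rational linear factors of $F$, which prevents a clean area-based count of $|F|\le B$; the brute force box $\max(|x|,|y|)\le (B/C_F)^{1/d}$, together with the linear bound on integer zeros in that box, sidesteps this issue because $2/d-\varepsilon>1/d$ is not needed—any $\varepsilon>0$ is absorbed by the main term $B^{2/d}$ dwarfing the zero-locus error $B^{1/d}$ once $d\ge 3$.
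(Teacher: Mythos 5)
Your proposal is correct and follows essentially the same route as the paper: a box of side $\asymp B^{1/d}$ determined by the sum of the absolute values of the coefficients gives $\gg B^{2/d}$ pairs with $|F(x,y)|\le B$, the $O(B^{1/d})$ lattice points on the rational linear factors are discarded, and Lemma \ref{EverBomSch} bounds the multiplicity of each nonzero value by $O_\varepsilon(B^\varepsilon)$, yielding $N(F,B)\gg B^{2/d-\varepsilon}$. No gaps.
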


\begin{proof} 
Let $\eta := (\sum _{0 \leq i \leq d} \vert a_i\vert)^{-1}$ be the inverse of the sum of the absolute values of the coefficients of the form $F(X,Y) = \sum_i a_i X^i Y^{d-i}$. Then for $\vert x\vert, \vert y \vert \leq (\eta B)^{1/d}$, one has the inequality $\vert F(x, y) \vert \leq B$. So we exhibited a set $\mathcal E$ of $\gg B^{2/d}$ pairs of integers $(x, y)$ such that $\vert F (x, y) \vert \leq B$. Furthermore, we exclude from $\mathcal E$ all the pairs $(x, y)$ such that $F(x ,y) = 0$. This number is readily verified to be $O(B^{1/d})$, as they correspond to the rational linear factors of the form $F$.

Finally, to bound the number of distinct values of $F(x, y)$ we use Lemma \ref{EverBomSch}. It asserts that the number of solutions to the equation $F(x, y) = m$, where the unknowns are the integers $(x, y)$ and where $m \neq 0$ is a parameter, is bounded by $\ll_{\varepsilon} \vert m \vert^\varepsilon$. It remains to combine these informations to complete the proof.
\end{proof}

A result of Fouvry and Waldschmidt, see \cite[Theorem 1.1]{FW}, shows that the intersection of the images of two forms, not ${\rm GL}(2, \Q)$--equivalent, is much smaller than each of these images.

\begin{lemma} 
\label{intersectionofimages} 
For every $d \geq 3$, there exists a constant $\kappa_d <2/d$ such that for every $F, G \in {\rm Bin}(d, \Z)$, which are not ${\rm GL}(2, \Q)$--equivalent, and for every $N \geq 1$, we have the upper bound
$$
\left\vert\{m: \vert m \vert \leq N, m = F(x_1, x_2) = G(x_3, x_4) \textup{ for some } (x_1, x_2, x_3, x_4) \in \Z^4\}\right\vert = O_{F, G}\left(N^{\kappa_d} \right). 
$$
\end{lemma}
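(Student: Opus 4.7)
The statement is precisely Theorem 1.1 of \cite{FW}, so the plan is to invoke that result directly. To describe the underlying strategy one would use if reproving it from scratch: I would first reduce the counting of common values to a point count on the affine surface
$$
V := \{(x_1, x_2, x_3, x_4) \in \mathbb A^4 : F(x_1, x_2) = G(x_3, x_4)\}.
$$
Each $m$ in the set we wish to bound produces at least one integer point of $V$ with $|F(x_1, x_2)| = |m| \leq N$, so an upper bound for the number of integer points of $V$ inside a box of side $\ll N^{1/d}$, combined with a dyadic decomposition of $|m|$, immediately translates into a bound for the number of common values.

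Next I would analyse the geometry of $V$ by slicing it: fixing $x_3, x_4$ reduces $V$ to a Thue equation $F(x_1, x_2) = G(x_3, x_4)$, and fixing $x_1, x_2$ reduces it to a Thue equation in the remaining variables. The hypothesis $F \not\sim_{{\rm GL}(2, \Q)} G$ is essential at precisely this point: were $G = F \circ \gamma$ for some $\gamma \in {\rm GL}(2, \Q)$, the trivial parametrization $(x_1, x_2, \gamma^{-1}(x_1, x_2))$ would produce $\gg N^{2/d}$ integer points on $V$ and rule out any improvement over the trivial exponent $2/d$. Under the non-equivalence hypothesis, one expects these sliced curves to be non-degenerate, so that their integer solutions contribute much less than the trivial estimate suggests.

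The hard part will be extracting a \emph{strict} saving, i.e. an exponent $\kappa_d < 2/d$, from the mere hypothesis $F \not\sim_{{\rm GL}(2, \Q)} G$. The approach of \cite{FW} is to combine a gap principle for Thue equations (handling small solutions of each sliced equation uniformly) with an effective estimate for large solutions stemming from Baker's method, exploiting the non-equivalence to guarantee genuine linear independence of the relevant logarithms. Once the bound $O_{F,G}(N^{\kappa_d})$ for the integer points of $V$ is secured, the lemma follows by combining it with Lemma \ref{EverBomSch} (used to pass from 4-tuples to distinct values of $m$ with only a negligible $N^{\varepsilon}$ loss, which is absorbed into $\kappa_d < 2/d$) and summing dyadically over $|m| \leq N$.
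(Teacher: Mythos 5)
Your proposal matches the paper exactly: the paper gives no proof of this lemma and simply cites \cite[Theorem 1.1]{FW}, which is also your main step. (Your sketched strategy for reproving it from scratch --- gap principles plus Baker's method --- is not actually how \cite{FW} proceeds, which instead counts integer points on the surface $F(X,Y)=G(Z,T)$ via Heath-Brown--Salberger type bounds together with an analysis of the lines on that surface, but this is immaterial since the citation itself carries the proof.)
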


Combining Lemmas \ref{resultStewartXiaobis} and \ref{intersectionofimages}, we deduce 

\begin{lemma}
\label{existenceofrho} 
Let $F_1, F_2 \in {\rm Bin}(d, \Q)$ be such that $F_1(\Z^2) = F_2(\Z^2)$. Then there exists $\rho \in {\rm GL}(2, \Q)$ such that 
\begin{equation}
\label{F1rho= F2}
F_1 \circ \rho = F_2.
\end{equation}
\end{lemma}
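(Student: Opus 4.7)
The plan is to argue by contradiction, exploiting the tension between Lemma \ref{resultStewartXiaobis} and Lemma \ref{intersectionofimages}. First I would reduce to the case of integer coefficients: choose a positive integer $M$ such that $MF_1, MF_2 \in {\rm Bin}(d, \Z)$. The hypothesis $F_1(\Z^2) = F_2(\Z^2)$ scales to $(MF_1)(\Z^2) = (MF_2)(\Z^2)$, and the conclusion $F_1 \circ \rho = F_2$ is equivalent to $(MF_1) \circ \rho = MF_2$ since composition commutes with scalar multiplication. So it suffices to prove the statement for $F_1, F_2 \in {\rm Bin}(d, \Z)$.

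Suppose now, towards a contradiction, that $F_1 \not\sim_{{\rm GL}(2,\Q)} F_2$. Lemma \ref{intersectionofimages} furnishes a constant $\kappa_d < 2/d$ such that
$$
\bigl\vert \{m : \vert m \vert \le B, \ m \in F_1(\Z^2) \cap F_2(\Z^2)\}\bigr\vert = O_{F_1, F_2}\bigl(B^{\kappa_d}\bigr).
$$
By the equality $F_1(\Z^2) = F_2(\Z^2)$, the set on the left is exactly $\{m : \vert m\vert \le B, m \in F_1(\Z^2)\}$, which has cardinality $N(F_1, B)$ in the notation of \eqref{defNF}. On the other hand, Lemma \ref{resultStewartXiaobis} gives, for every $\varepsilon > 0$,
$$
N(F_1, B) \gg_\varepsilon B^{2/d - \varepsilon}
$$
for all sufficiently large $B$. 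Choosing any $\varepsilon \in (0, 2/d - \kappa_d)$ produces a direct contradiction as $B \to \infty$.

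Hence $F_1 \sim_{{\rm GL}(2,\Q)} F_2$, and there exists $\rho \in {\rm GL}(2,\Q)$ with $F_1 \circ \rho = F_2$. No real obstacle arises here: the argument is essentially forced once one recognizes that when the two value sets coincide, their intersection coincides with a single image, so a lower bound of order $B^{2/d - \varepsilon}$ must be compatible with an upper bound of order $B^{\kappa_d}$ with $\kappa_d < 2/d$. The only mild subtlety is the passage from $\Q$-coefficients to $\Z$-coefficients at the outset, which is handled by the common-denominator trick above.
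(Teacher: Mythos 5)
Your proof is correct and is essentially the argument the paper intends: the lemma is stated right after the sentence ``Combining Lemmas \ref{resultStewartXiaobis} and \ref{intersectionofimages}, we deduce'', i.e.\ the lower bound $N(F_1,B)\gg_\varepsilon B^{2/d-\varepsilon}$ against the upper bound $O(B^{\kappa_d})$ with $\kappa_d<2/d$ for non-${\rm GL}(2,\Q)$--equivalent forms, after clearing denominators exactly as you do.
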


This lemma answers positively the question \eqref{G= Fogamma}, if we impose the much weaker condition $\gamma \in {\rm GL}(2, \Q)$ instead of $\gamma \in{\rm GL}(2, \Z)$. 

\section{From Corollary \ref{illustration2} to Corollary \ref{illustration1}}
The goal of this section is to deduce Corollary \ref{illustration1} from Corollary \ref{illustration2}. We first recall some classical results classifying all the finite subgroups of ${\rm GL}(2, \Q)$ up to conjugacy. Then we prove Lemma \ref{433}, which plays a central role in our paper. This lemma gives sufficient conditions for a form to be extraordinary. The proof generalizes various constructions we have already seen in the introduction. We will eventually see that the conditions in Lemma \ref{433} are also necessary, although this result lies substantially deeper.

\subsection{A second list of definitions and lemmas} 
We generalize the notion of automorphism as follows.

\begin{definition}
Let $R \in \{\Z, \Q, \C\}$. Let $d \geq 3$ and let $F, G \in {\rm Bin}(d, R)$. An element $\rho \in {\rm GL}(2, R)$ is said to be an isomorphism from $F$ to $G$ if it satisfies the equality
$$
F \circ \rho = G. 
$$
The set of all isomorphisms from $F$ to $G$ is denoted by ${\rm Isom}(F \rightarrow G, R)$. We have
$$
{\rm Isom}(F\rightarrow F, \C) = {\rm Aut}(F, R).
$$ 
\end{definition}

The next lemma is obvious.

\begin{lemma} 
\label{959} 
Let $d \geq 3$ and let $F, G \in {\rm Bin}(d, \C)$. We have the equality 
$$
{\rm Isom}(G \rightarrow F, \C) = {\rm Isom}^{-1}(F \rightarrow G, \C),
$$ 
and the conjugation formula 
$$
{\rm Isom}(F \circ \gamma \rightarrow G\circ \delta , \C) = \gamma^{-1}{\rm Isom}(F\rightarrow G, \C) \delta
$$ 
for any $\gamma, \delta \in {\rm GL}(2, \C)$. 

Suppose that ${\rm Isom}(F\rightarrow G, \C)$ is not empty. Let $\rho$ be one of its elements. We then have the equality
$$
{\rm Isom}(F \rightarrow G, \C) = \rho \circ {\rm Aut}(G, \C) = {\rm Aut}(F, \C) \circ \rho.
$$
\end{lemma}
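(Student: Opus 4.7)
The lemma is a bundle of three formal identities asserting that $\mathrm{Isom}(F \to G, \C)$ behaves like a right torsor under $\mathrm{Aut}(G, \C)$ and a left torsor under $\mathrm{Aut}(F, \C)$. All three parts will follow from manipulating the defining relation $F \circ \rho = G$, using only the fact that the action of $\mathrm{GL}(2, \C)$ on binary forms given by $(F, \gamma) \mapsto F \circ \gamma$ is a right action, i.e.\ $F \circ (\gamma_1 \gamma_2) = (F \circ \gamma_1) \circ \gamma_2$, which is visible directly from the definition $(F \circ \gamma)(X, Y) = F(aX + bY, cX + dY)$. There is no essential obstacle; the authors themselves label the statement obvious.

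For the first identity, I would simply note that $F \circ \rho = G$ is equivalent, by right-composition with $\rho^{-1}$, to $G \circ \rho^{-1} = F$. Thus the map $\rho \mapsto \rho^{-1}$ is a bijection $\mathrm{Isom}(F \to G, \C) \to \mathrm{Isom}(G \to F, \C)$, which is precisely the claim. For the conjugation formula, I would expand $(F \circ \gamma) \circ \mu = G \circ \delta$ as $F \circ (\gamma \mu) = G \circ \delta$ by associativity of the right action, and then right-compose by $\delta^{-1}$ to obtain $F \circ (\gamma \mu \delta^{-1}) = G$. Hence $\mu$ lies in $\mathrm{Isom}(F \circ \gamma \to G \circ \delta, \C)$ if and only if $\gamma \mu \delta^{-1} \in \mathrm{Isom}(F \to G, \C)$, which unpacks to the stated equality $\mathrm{Isom}(F \circ \gamma \to G \circ \delta, \C) = \gamma^{-1} \mathrm{Isom}(F \to G, \C) \delta$.

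For the final assertion, fix $\rho \in \mathrm{Isom}(F \to G, \C)$. Any other $\rho' \in \mathrm{Isom}(F \to G, \C)$ satisfies $F \circ \rho' = F \circ \rho$. Right-composing by $\rho^{-1}$ gives $F \circ (\rho' \rho^{-1}) = F$, so $\rho' \rho^{-1} \in \mathrm{Aut}(F, \C)$ and therefore $\rho' \in \mathrm{Aut}(F, \C) \circ \rho$. Alternatively, using the already-established relation $G \circ \rho^{-1} = F$, the equality $F \circ \rho' = G$ becomes $G \circ (\rho^{-1} \rho') = G$, so $\rho^{-1} \rho' \in \mathrm{Aut}(G, \C)$ and $\rho' \in \rho \circ \mathrm{Aut}(G, \C)$. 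The reverse inclusions are immediate from the definitions, since composing $\rho$ with any automorphism of $G$ (resp.\ of $F$) on the right (resp.\ left) preserves membership in $\mathrm{Isom}(F \to G, \C)$. This completes all three parts.
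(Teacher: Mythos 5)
Your proof is correct: each identity follows, exactly as you argue, from the fact that $F \circ (\gamma_1\gamma_2) = (F \circ \gamma_1) \circ \gamma_2$ together with right-composition by inverses. The paper gives no proof at all (it declares the lemma obvious), and your direct verification is precisely the intended routine argument.
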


Theorem \ref{source} shows the crucial importance of the group ${\rm Aut}(F, \Q)$. As announced by \eqref{frakH}, the following lemma recalls all the possibilities for such a group. We adopt the notations of \cite[Table 1, p.137]{SX}.

\begin{lemma}
\label{possibleAut} 
Let $d \geq 3$ and let $F \in {\rm Bin}(d, \Q)$. Then ${\rm Aut}(F, \Q)$ is ${\rm GL}(2, \Q)$--conjugate to one of the ten following subgroups
of ${\rm GL}(2, \Q)$:

$$
\begin{matrix}
{\bf Subgroup}&{\bf Generators} & &{\bf Cardinality}\\
{\bf C}_1 :& \begin{pmatrix} 1 & 0\\0 & 1 \end{pmatrix} && 1&\\
{\bf C}_2: & \begin{pmatrix} -1& 0\\0 & -1 \end{pmatrix}& & 2&\\
{\bf C}_3 :& \begin{pmatrix} 0 & 1\\-1 & -1 \end{pmatrix}& & 3&\\
{\bf C}_4 :& \begin{pmatrix} 0 & 1\\-1 & 0 \end{pmatrix} && 4&\\
{\bf C}_6 :& \begin{pmatrix} 0 & -1\\1 & 1 \end{pmatrix} && 6&\\
{\bf D}_1: & \begin{pmatrix} 0 & 1\\1 & 0 \end{pmatrix} & &2&\\
{\bf D}_2: & \begin{pmatrix} 0 & 1\\1 & 0 \end{pmatrix} ,& \begin{pmatrix} -1& 0\\ 0 & -1 \end{pmatrix}& 4&\\
{\bf D}_3 :& \begin{pmatrix} 0 & 1\\1 & 0 \end{pmatrix} ,& \begin{pmatrix} 0& 1\\ -1 & -1 \end{pmatrix}& 6&\\
{\bf D}_4 :& \begin{pmatrix} 0 & 1\\1 & 0 \end{pmatrix} ,& \begin{pmatrix} 0 & 1\\ -1 & 0 \end{pmatrix}& 8&\\
{\bf D}_6 :& \begin{pmatrix} 0 & 1\\1 & 0 \end{pmatrix} ,& \begin{pmatrix} 0& 1\\ -1& 1 \end{pmatrix}& 12&\\
\end{matrix}
$$
This list can not be shortened. The set of these ten subgroups is denoted by $\mathfrak H$.
\end{lemma}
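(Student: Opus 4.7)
The plan is to prove Lemma \ref{possibleAut} in three stages: finiteness, abstract classification, and conjugacy-class enumeration.

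First, I would establish that $\mathrm{Aut}(F,\Q)$ is a finite group. Since $F \in \mathrm{Bin}(d,\Q)$ has nonzero discriminant, the dehomogenization $F(z,1)$ has $d$ distinct roots in $\mathbb{P}^1(\C)$, and the full factorization of $F$ over $\C$ exhibits exactly $d \geq 3$ distinct root-points. An element $\gamma \in \mathrm{Aut}(F,\Q)$ acts on $\mathbb{P}^1(\C)$ as a M\"obius transformation sending the root-set bijectively to itself, so the natural map $\mathrm{Aut}(F,\Q)/\{\pm I\} \to \mathrm{Sym}(d)$ is well-defined. Because a M\"obius transformation is determined by its action on any three points, this map is injective, and thus $\mathrm{Aut}(F,\Q)$ is finite (of order dividing $2 \cdot d!$).

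Next I would classify finite subgroups of $\mathrm{GL}(2,\Q)$ up to $\mathrm{GL}(2,\Q)$-conjugacy. Any finite subgroup $G \subset \mathrm{GL}(2,\Q) \subset \mathrm{GL}(2,\R)$ is conjugate in $\mathrm{GL}(2,\R)$ to a subgroup of $\mathrm{O}(2,\R)$ via an averaged inner product, so $G$ is abstractly cyclic $C_n$ or dihedral $D_n$. For a rotation of order $n$, the eigenvalues are primitive $n$-th roots of unity, and the characteristic polynomial is $\Phi_n(x) \in \Q[x]$ of degree $\varphi(n)$; this must be at most $2$, forcing $n \in \{1,2,3,4,6\}$. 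So the possible abstract isomorphism types are exactly $C_1,C_2,C_3,C_4,C_6$ and $D_1,D_2,D_3,D_4,D_6$.

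Then I would pin down the $\mathrm{GL}(2,\Q)$-conjugacy classes within each abstract type using rational representation theory. For each $n \in \{3,4,6\}$ the cyclic group $C_n$ has a unique faithful two-dimensional rational irreducible representation (of dimension $\varphi(n) = 2$), so a faithful embedding $C_n \hookrightarrow \mathrm{GL}(2,\Q)$ is unique up to $\mathrm{GL}(2,\Q)$-conjugacy, giving the listed subgroups $\mathbf{C}_3,\mathbf{C}_4,\mathbf{C}_6$. The abstract group $C_2$ has two non-isomorphic faithful two-dimensional rational representations, namely $\mathrm{sgn}\oplus\mathrm{sgn}$ (realized by $-I$, giving $\mathbf{C}_2$) and $\mathbf{1}\oplus\mathrm{sgn}$ (realized by any non-central involution, giving $\mathbf{D}_1$); these are inconjugate since they have different traces. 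An analogous but slightly longer analysis using rational character theory for the dihedral groups $D_n$ with $n \in \{2,3,4,6\}$ gives a unique faithful rational representation in each case, yielding $\mathbf{D}_2,\mathbf{D}_3,\mathbf{D}_4,\mathbf{D}_6$. Pairwise non-conjugacy of the ten groups follows by comparing cardinalities and, within a fixed cardinality, by comparing invariants such as the trace spectrum of elements or the number of central involutions.

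The main obstacle is the dihedral case: verifying that each $\mathbf{D}_n$ is unique up to $\mathrm{GL}(2,\Q)$-conjugacy requires checking that the $\mathrm{GL}(2,\Q)$-conjugacy class of the reflection is forced by that of the rotation, which amounts to the vanishing of a certain $H^1$ and can be done explicitly by solving the equation $\tau \sigma \tau^{-1} = \sigma^{-1}$ with $\tau \in \mathrm{GL}(2,\Q)$ an involution. To finish, I would exhibit an explicit $F \in \mathrm{Bin}(d,\Q)$ realizing each of the ten groups (e.g.\ $X^d + Y^d$ and its obvious variants, plus the form $F_{1,0}$ from Example \ref{example1.7} for $\mathbf{C}_3$), which shows the list cannot be shortened. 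Since this classification is essentially due to Newman and appears in \cite[Table 1, p.137]{SX}, I would keep the exposition brief and cite that source for the tabulation, detailing only the finiteness step and the non-conjugacy verifications that are needed later.
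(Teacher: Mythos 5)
Your proposal is correct in substance, but note that the paper does not prove Lemma \ref{possibleAut} at all: it simply imports the classification from Stewart--Xiao \cite[Table 1, p.137]{SX}, i.e.\ the known list of finite subgroups of ${\rm GL}(2, \Q)$ up to ${\rm GL}(2, \Q)$--conjugacy. Your sketch supplies the standard self-contained argument instead: finiteness of ${\rm Aut}(F, \Q)$ via the faithful action modulo scalars on the $d \geq 3$ distinct projective roots of $F$, the crystallographic restriction $\varphi(n) \leq 2$ forcing element orders in $\{1, 2, 3, 4, 6\}$, uniqueness of each cyclic class (the characteristic polynomial of a rational matrix of order $3$, $4$ or $6$ must be the corresponding cyclotomic polynomial, so the rational canonical form is determined), and uniqueness of each dihedral class by solving $\tau \sigma \tau^{-1} = \sigma^{-1}$, which, as you note, reduces to a Hilbert--90 type computation in $\Q(\zeta_n)$ for $n \in \{3, 4, 6\}$. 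What the citation buys the paper is brevity; what your route buys is independence from the reference, except that the realizability of all ten groups by forms of degree $\geq 3$ (the ``list cannot be shortened'' clause) is exactly where you, like the paper, fall back on \cite{SX}. Two small imprecisions to repair if you write this up: the injection into the symmetric group should be from ${\rm Aut}(F, \Q)$ modulo its subgroup of scalar matrices, which is $\{\pm {\rm id}\}$ only when $d$ is even and is trivial when $d$ is odd, so writing ${\rm Aut}(F, \Q)/\{\pm I\}$ is not literally correct; and for ${\bf D}_2$ the Klein four group has three inequivalent faithful two-dimensional rational representations, which however all have image conjugate to $\{\pm {\rm id}, \pm \, {\rm diag}(1, -1)\}$, so the right assertion there is uniqueness of the subgroup up to conjugacy, not uniqueness of the faithful representation.
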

 
The two lemmas below will be used to deduce Corollary \ref{illustration2} from Theorem \ref{source} in \S \ref{section4}. Recall the notation \eqref{defR}.

\begin{lemma} 
\label{sigma<->C3}
Let $A_1$ be an integral matrix with order $3$ and let $\left\langle A_1\right\rangle$ be the subgroup generated by $A_1$. Then we have
$$
\left\langle A_1\right\rangle \simeq_{{\rm GL}(2, \Z)} {\mathbf C}_3.
$$
Furthermore, we have 
$$
A_1 \simeq_{{\rm GL}(2, \Z)} R.
$$
\end{lemma}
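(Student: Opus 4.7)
The plan is to exploit the action of $A_1$ to endow $\Z^2$ with the structure of a module over $\Z[\omega]$, where $\omega = e^{2\pi i/3}$, and then to invoke the fact that $\Z[\omega]$ is a PID (a Euclidean domain, in fact). The second assertion will imply the first via $\langle A_1 \rangle = P\,\mathbf{C}_3\,P^{-1}$ for the conjugating matrix $P$ that we build.

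First I would pin down the minimal polynomial of $A_1$. Since $A_1$ has order $3$, its minimal polynomial divides $X^3 - 1 = (X-1)(X^2+X+1)$; it is not $X - 1$ because $A_1 \neq \mathrm{id}$, and its degree is at most $2$ because $A_1 \in M_2(\Z)$. Hence the minimal polynomial is $X^2 + X + 1$, which then also equals the characteristic polynomial, and one has the relation $A_1^2 = -A_1 - \mathrm{id}$.

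Next, let $X$ act on $\Z^2$ by $A_1$; this turns $\Z^2$ into a module over $\Z[X]/(X^2 + X + 1) \cong \Z[\omega]$. As a $\Z[\omega]$-module it is finitely generated and torsion-free (since it is so as a $\Z$-module). Because $\Z[\omega]$ is a PID, the structure theorem yields $\Z^2 \simeq \Z[\omega]^r$ as $\Z[\omega]$-modules; comparing $\Z$-ranks forces $r = 1$. Pick a $\Z[\omega]$-generator $v \in \Z^2$, so that $\{v, A_1 v\}$ is a $\Z$-basis of $\Z^2$; equivalently, $\{v, -A_1 v\}$ is a $\Z$-basis of $\Z^2$. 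Let $P \in \mathrm{GL}(2,\Z)$ be the matrix with columns $v$ and $-A_1 v$. Using $A_1^2 = -A_1 - \mathrm{id}$, we compute $A_1 \cdot v = -(-A_1 v)$ and $A_1 \cdot(-A_1 v) = v - (-A_1 v)$, which matches exactly the action of $R$ on the standard basis of $\Z^2$. Therefore $P^{-1} A_1 P = R$, and conjugating the cyclic group $\langle A_1 \rangle$ by $P$ gives $\mathbf{C}_3 = \langle R \rangle$, as required.

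The essential content is the PID step, which guarantees that $\Z^2$ is free of rank one over $\Z[\omega]$; everything else is bookkeeping. The only small trap is the sign/ordering in the choice of the second basis vector: with the naive choice $\{v, A_1 v\}$ one lands on an order-$3$ matrix conjugate to $R^2$ rather than $R$, and one must use $\{v, -A_1 v\}$ (or equivalently interchange the two basis vectors and take $v$ to be a generator paired with $A_1^2 v$) to match $R$ exactly.
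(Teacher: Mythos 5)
Your proof is correct, but it follows a genuinely different route from the paper. The paper disposes of the subgroup statement by citing the classification (Stewart, Table 1) of the thirteen finite subgroups of ${\rm GL}(2, \Z)$ up to ${\rm GL}(2, \Z)$--conjugation, noting that ${\bf C}_3$ is the only one of order $3$, and then upgrades to the matrix statement $A_1 \simeq_{{\rm GL}(2,\Z)} R$ with the single observation that $R$ and $R^2$ are conjugate via $\begin{pmatrix} 0 & 1 \\ 1 & 0 \end{pmatrix}$. You instead give a self-contained Latimer--MacDuffee--style argument: the minimal polynomial is forced to be $X^2+X+1$, so $\Z^2$ becomes a finitely generated, torsion-free $\Z[\omega]$--module, and since $\Z[\omega]$ is a PID (class number one) it is free of rank one, which produces an explicit conjugating matrix $P$ with $P^{-1}A_1P = R$; the subgroup statement then follows by conjugating $\langle A_1\rangle$. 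What your approach buys is independence from the classification table and an explicit construction of $P$; what the paper's approach buys is brevity and a statement (the thirteen conjugacy classes) it wants on record anyway for comparison with Lemma 3.3. Two small points: your torsion-freeness parenthesis deserves the one-line norm argument (if $\alpha m = 0$ with $\alpha \neq 0$ then $\Norm(\alpha)\, m = \overline{\alpha}\alpha m = 0$, so $m = 0$), and your closing remark about the ``trap'' is not quite right as stated: the naive basis $\{v, A_1 v\}$ yields an order-$3$ integral matrix which, like every such matrix, is ${\rm GL}(2,\Z)$--conjugate to \emph{both} $R$ and $R^2$ (they are conjugate to each other by the swap matrix), so that choice would also suffice after one extra conjugation; your explicit choice $\{v, -A_1 v\}$ simply lands on $R$ directly, which is fine.
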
 

\begin{proof} 
By \cite[p. 817, Table 1]{S}, we know that there exist exactly thirteen finite subgroups of ${\rm GL}(2, \Z)$, up to ${\rm GL}(2, \Z)$--conjugation (compare with Lemma \ref{possibleAut} which deals with ${\rm GL}(2, \Q)$--conjugation). This list contains only one subgroup with cardinality $3$. This is ${\bf C}_3$. The last statement follows from the fact that $R$ and $R^2$ are ${\rm GL}(2, \Z)$--conjugate by the matrix $\begin{pmatrix} 0 & 1\\ 1& 0\end{pmatrix}$.
\end{proof}

The following lemma gives some key properties of the binary forms with an automorphism of order $3$.

\begin{lemma}
\label{433}
Let $d \geq 3$ and let $F \in {\rm Bin}(d, \Z)$. Assume that ${\rm Aut}(F, \Z)$ contains an element $\sigma$ of order $3$. Then we have the following equivalences between forms 
$$ 
F(X, Y) \sim_{\rm val} F(2X, Y) \quad \textup{ and } \quad F(X, Y) \sim_{\rm val} F(X, 2Y).
$$
Furthermore, the forms $F(X, Y)$ and $F(2X, Y)$ are not ${\rm GL}(2, \Z)$--equivalent, but $F(2X, Y)$ and $F(X, 2Y)$ are ${\rm GL}(2, \Z)$--equivalent.
\end{lemma}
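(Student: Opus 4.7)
The plan is to exploit the fact that any element $\sigma$ of order $3$ in ${\rm GL}(2, \Z)$ has minimal polynomial $x^2 + x + 1$: indeed its minimal polynomial divides $x^3 - 1 = (x-1)(x^2+x+1)$, has degree at most $2$ since $\sigma$ is $2 \times 2$, and cannot be $x - 1$ because $\sigma \neq {\rm id}$. Hence $\sigma^2 + \sigma + {\rm id} = 0$ as a matrix, so for every $v \in \Z^2$ one has $v + \sigma v + \sigma^2 v = 0$.

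For the value-set equivalences, the three first coordinates of $v$, $\sigma v$, $\sigma^2 v$ are integers summing to $0$, so an even number of them are odd; in particular at least one is even. Since $\sigma \in {\rm Aut}(F, \Z)$, the function $F$ is constant on the orbit $\{v, \sigma v, \sigma^2 v\}$, so the value $F(v)$ is attained at some point of $2\Z \times \Z$. This gives $F(\Z^2) \subseteq F(2\Z \times \Z)$, and the opposite inclusion is trivial, which yields $F \sim_{\rm val} F(2X, Y)$; the same argument applied to second coordinates yields $F \sim_{\rm val} F(X, 2Y)$.

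For the non-equivalence of $F$ and $F(2X, Y)$, I would invoke the transformation formula ${\rm disc}(F \circ \gamma) = (\det \gamma)^{d(d-1)} {\rm disc}(F)$. Since $d(d-1)$ is even, the discriminant is a ${\rm GL}(2, \Z)$-invariant; but ${\rm disc}(F(2X, Y)) = 2^{d(d-1)} {\rm disc}(F) \neq {\rm disc}(F)$, because $d \geq 3$ and ${\rm disc}(F) \neq 0$.

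The most substantive step is producing the ${\rm GL}(2, \Z)$-equivalence between $F(2X, Y)$ and $F(X, 2Y)$. Writing $\sigma = \bigl(\begin{smallmatrix} a & b \\ c & e \end{smallmatrix}\bigr)$, the identity $\sigma^2 = -\sigma - {\rm id}$ shows that the $(1,1)$-entries of $\sigma$ and $\sigma^2$ sum to $-1$, hence exactly one of them is even; call the corresponding power $\sigma^{\ast}$. Set $D_1 = {\rm diag}(2, 1)$, $D_2 = {\rm diag}(1, 2)$ and define $\gamma := D_1^{-1} \sigma^{\ast} D_2$. A direct computation shows that $\gamma$ has integer entries (thanks to the parity of the $(1,1)$-entry of $\sigma^{\ast}$) and $\det \gamma = \det \sigma^{\ast} = 1$, so $\gamma \in {\rm SL}(2, \Z)$. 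By construction $D_1 \gamma = \sigma^{\ast} D_2$, whence $(F(2X, Y)) \circ \gamma = F \circ \sigma^{\ast} \circ D_2 = F \circ D_2 = F(X, 2Y)$. The only real (and still mild) obstacle in the whole proof is the parity bookkeeping needed to ensure $\gamma \in {\rm GL}(2, \Z)$; everything else falls out almost immediately from the single identity $\sigma^2 + \sigma + {\rm id} = 0$.
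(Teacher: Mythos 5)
Your proof is correct, and its overall skeleton is the same as the paper's: use the order-$3$ orbit $\{v, \sigma v, \sigma^2 v\}$ to get the value-set equalities, an invariant to rule out ${\rm GL}(2,\Z)$-equivalence of $F$ and $F(2X,Y)$, and an explicit integral matrix of the form $D_1^{-1}\sigma^{\ast}D_2$ (which is literally the matrix $\rho = \gamma'\tau$ or $\gamma'\tau^2$ appearing in the paper) to conjugate $F(2X,Y)$ into $F(X,2Y)$. The two points where you deviate are both legitimate and slightly more economical: (i) you extract all the needed parity information from the minimal-polynomial identity $\sigma^2 + \sigma + {\rm id} = 0$ (trace $-1$, so the first coordinates of $v,\sigma v,\sigma^2 v$ sum to $0$ and the $(1,1)$-entries of $\sigma,\sigma^2$ sum to $-1$), whereas the paper first invokes Lemma \ref{sigma<->C3} — which rests on the classification of finite subgroups of ${\rm GL}(2,\Z)$ — to write $\sigma$ as an explicit ${\rm GL}(2,\Z)$-conjugate of $R$ and read off the parities of its entries; (ii) you separate $F$ from $F(2X,Y)$ via the discriminant transformation rule ${\rm disc}(F\circ\gamma) = (\det\gamma)^{d(d-1)}{\rm disc}(F)$ (valid since ${\rm disc}(F)\neq 0$ by definition of ${\rm Bin}(d,\Z)$ and $d(d-1)$ is even), while the paper uses the area invariance \eqref{invarianceofarea}; both are one-line invariant arguments and the paper itself uses the discriminant for exactly this purpose in its examples. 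So your argument buys a small amount of self-containedness at no cost, and no step is missing: the parity of the $(1,1)$-entry of $\sigma^{\ast}$ does guarantee $D_1^{-1}\sigma^{\ast}D_2$ is integral, its determinant is $1$, and $F\circ\sigma^{\ast}=F$ gives $F(2X,Y)\circ\gamma = F(X,2Y)$ as you state.
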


\begin{proof} 
Lemma \ref{sigma<->C3} implies that $\langle \sigma\rangle$ and ${\bf C}_3$ are ${\rm GL}(2, \Z)$--conjugate. Now let $\sigma$ be an element of ${\rm Aut}(F, \Z)$, which is ${\rm GL}(2, \Z)$--conjugate with $R$ defined in \eqref{defR}. Write $\sigma$ as
$$
\sigma:=\begin{pmatrix} a & b \\ c& d \end{pmatrix},
$$ 
and notice that $\det \sigma = 1$. Then for some integers $t_1$, $t_2$, $t_3$ and $t_4$ such that
\begin{equation}
\label{tt-tt=}
t_1t_4 -t_2 t_3 = \pm 1,
\end{equation}
one has the equality
\begin{equation}
\label{sigma=...}
\sigma = \pm \begin{pmatrix} t_4 & -t_2 \\ -t_3 & t_1 \end{pmatrix} \cdot R \cdot \begin{pmatrix} t_1 & t_2 \\ t_3 & t_4 \end{pmatrix} = \pm \begin{pmatrix} t_1t_2+t_2t_3+t_3t_4 & t_2^2 +t_4^2 +t_2t_4\\ -t_1^2 -t_3^2 -t_1t_3 & -t_1 t_2 -t_1 t_4 -t_3 t_4\end{pmatrix}.
\end{equation}
From \eqref{tt-tt=}, we see that
\begin{itemize}
\item $t_1$ and $t_3$ are not both even,
\item $t_2$ and $t_4$ are not both even,
\item $t_1, t_2, t_3, t_4$ are not all odd.
\end{itemize}
It follows from these facts and from \eqref{sigma=...} that $b$ and $c$ are odd and that precisely one of $a$ and $d$ is odd. Write $G_1(X, Y) := F(2X, Y)$ and $G_2(X, Y) := F(X, 2Y)$. We claim that
$$
F(X, Y) \sim_{\rm val} G_1(X, Y).
$$ 
One can similarly prove that $F(X, Y) \sim_{\rm val} G_2(X, Y)$. We visibly have $G_1(\Z^2) \subseteq F(\Z^2)$, so it suffices to prove the claim $F(\Z^2) \subseteq G_1(\Z^2)$. Suppose that the integers $m$ and $n$ are such that 
$$
F(m, n) =h.
$$
We generalize the transformations appearing in \eqref{G= G= G} as follows. Since $\sigma$ and $\sigma^{-1}$ are automorphisms of $F$, we have 
$$
F(m, n) = F(am + bn, cm + dn) = F(dm - bn, -cm + an).
$$
Because at least one of the three integers $m$, $am+bn$ and $dm-bn$ is even, we see that $h$ is also represented by $G_1$. This gives the claim. 

It remains to establish the last part of Lemma \ref{433}. Define
$$
\gamma := \begin{pmatrix} 2 & 0\\ 0 & 1\end{pmatrix},
$$ 
so that $F \circ \gamma = G_1$. As a consequence of \eqref{invarianceofarea}, the forms $F$ and $G_1$ are not ${\rm GL}(2, \Z)$--equivalent. We now show that $G_1$ and $G_2$ are ${\rm GL}(2, \Z)$--equivalent. We have $G_1 \circ \gamma' = G_2$, where
$$
\gamma' := \begin{pmatrix} 1/2 & 0 \\ 0 & 2 \end{pmatrix}.
$$
Let $\rho \in {\rm GL}(2, \Q)$. Then by Lemma \ref{959}, one has the equality 
\begin{equation}
\label{482}
G_1\circ \rho = G_2
\end{equation} 
if and only if
$$
\rho \in \{\gamma' \tau : \tau \in {\rm Aut}(G_2, \Q)\}.
$$
As a consequence of the definition of $G_2$ and of the conjugation formula \eqref{conjugationformula}, we have
$$
{\rm Aut}(G_2, \Q) = \begin{pmatrix} 1& 0\\ 0 & 2 \end{pmatrix}^{-1}\cdot {\rm Aut}(F, \Q) \cdot \begin{pmatrix} 1 &0\\ 0 & 2 \end{pmatrix}.
$$
Consider the element of order $3$
$$
\tau := \begin{pmatrix} 1 & 0 \\ 0 & 1/2\end{pmatrix} \cdot \sigma \cdot \begin{pmatrix} 1 & 0 \\0 & 2 \end{pmatrix} = \begin{pmatrix} a & 2b \\ c/2 & d \end{pmatrix},
$$
which is an element of ${\rm Aut}(G_2, \Q)$. We have the equalities
$$
\gamma' \tau = \begin{pmatrix} a/2 & b \\ c & 2d \end{pmatrix}, \quad \gamma' \tau^2 = \gamma' \tau^{-1} = \begin{pmatrix} d/2 &- b \\ -c & 2a \end{pmatrix}.
$$
Exactly one of these two matrices is integral. Call it $\rho$ and we obtain \eqref{482}, with $\rho \in {\rm GL}(2, \Z)$. Thus $G_1$ and $G_2$ are ${\rm GL}(2, \Z)$--equivalent, completing the proof of Lemma \ref{433}.
\end{proof}

We now have the tools to deduce Corollary \ref{illustration1} from Corollary \ref{illustration2}. We separate this proof in two parts.

\subsection{Deduction of Corollary \ref{illustration1} from Corollary \ref{illustration2}} 
\subsubsection{Proof of the {\it if} part} 
\label{theifpart} 
Let $F \in {\rm Bin}(d, \Z)$ be such that ${\rm Aut}(F, \Q)$ contains an element $\sigma$ of order $3$, written as in \eqref{writingsigma} and satisfying one of the conditions $\eqref{1}, \dots, \eqref{4}$. Let us prove that $F$ is extraordinary.

\begin{itemize}
\item If $\sigma$ satisfies \eqref{1}, it has integer entries. Lemma \ref{possibleAut} implies that 
$$
\{\tau \in {\rm Aut}(F, \Q) : \tau^3 = {\rm id}\} \subseteq {\rm GL}(2, \Z)
$$
and that $3 \mid |{\rm Aut}(F, \Q)|$. We now apply Corollary \ref{illustration2} with $G = F$.
\item Suppose that $\sigma $ satisfies \eqref{2} or \eqref{3}. Since $\det \sigma = 1$, we see that $bc$ must be an integer and therefore exactly one of the integers $b$ or $c$ is an even integer. Then exactly one of the following matrices 
$$
\begin{pmatrix}
1 &0\\ 0 & 2\end{pmatrix} \cdot \begin{pmatrix} a&b\\ c& d \end{pmatrix} \cdot \begin{pmatrix}
1 &0\\ 0 & 1/2\end{pmatrix} 
\text { or } 
\begin{pmatrix}
2 &0\\ 0 & 1\end{pmatrix} \cdot \begin{pmatrix} a&b\\ c& d \end{pmatrix} \cdot \begin{pmatrix}
1/2 &0\\ 0 & 1 \end{pmatrix}
$$
is an integral matrix. In the former case we apply Lemma \ref{433} to the form
\begin{equation}
\label{dag}
F^\dag := F \circ \begin{pmatrix} 1&0\\ 0 &1/2\end{pmatrix}.
\end{equation}
By \eqref{conjugationformula} we have
\begin{equation}
\label{534}
\{ {\rm id} \} \subsetneq\{\tau \in {\rm Aut}(F^\dag, \Q) : \tau^3 = {\rm id}\} \subseteq {\rm GL}(2, \Z).
\end{equation}
In the latter case we apply Lemma \ref{433} to 
\begin{equation}
\label{ddag}
F^\ddag := F \circ \begin{pmatrix} 1/2 & 0 \\ 0 & 1\end{pmatrix},
\end{equation}
which satisfies a property similar to \eqref{534}.

Let us concentrate on the case \eqref{dag}. Since ${\rm Aut}(F^\dag, \Z)$ contains an element of order $3$, we apply Lemma \ref{433} to $F^\dag$. We obtain that
$$
F^\dag (X, Y) \sim_{\rm val} F^\dag (X, 2Y) = F(X, Y),
$$ 
and that $F^\dag \not \sim_{{\rm GL}(2, \Z)} F$. The case \eqref{ddag} is similar. In both cases we conclude that $F$ is extraordinary.
\item Suppose that $\sigma$ satisfies \eqref{4}. We will reduce to the previous case by now considering the form $F^*$ defined by
$$
F^* = F \circ \begin{pmatrix} 1 & 1\\ 0& 1 \end{pmatrix}.
$$
The form $F^*$ is ${\rm GL}(2, \Z)$--equivalent to $F$ and its group of automorphisms contains the following element of order $3$
$$
\begin{pmatrix} 
1 & -1\\ 0 & 1 
\end{pmatrix}
\cdot
\begin{pmatrix} 
a & b \\ c & d 
\end{pmatrix}
\cdot 
\begin{pmatrix} 
1 & 1 \\ 0& 1
\end{pmatrix}
=
\begin{pmatrix}
a-c & a + b - c - d \\ c & c + d
\end{pmatrix}.
$$
This element satisfies \eqref{2}. So we are led to the previous item.
\end{itemize}

\subsubsection{Proof of the {\it only if} part} 
Let $F$ be an extraordinary form. By Corollary \ref{illustration2} we associate a form $G\in [F]_{\rm val}$ satisfying \eqref{268} and \eqref{269}. Remark that if $G$ satisfies these properties, then so does $G \circ \alpha$ for $\alpha \in {\rm GL}(2, \Z)$. Therefore, thanks to Lemma 
\ref{sigma<->C3}, we may assume that $R\in {\rm Aut}(G, \Z)$. Then by \eqref{269} we have
\begin{equation}
\label{twocases}
\text{ either } \ \ F = G \circ \gamma \ \text{ or } \ F = G \circ \begin{pmatrix} 2 & 0 \\ 0 & 1 \end{pmatrix} \circ \gamma
\end{equation}
for some $\gamma \in {\rm GL}(2, \Z)$. We will use the conjugation property \eqref{conjugationformula}. 

\begin{itemize}
\item If $G = F \circ \gamma$, it is clear that ${\rm Aut}(F, \Z)$ contains $ \sigma := \gamma^{-1} R \gamma $. This element $\sigma$ has order 3, has integer coefficient and satisfies \eqref{1}. 
\item Now suppose that we are in the second case of \eqref{twocases}. Since $R$ belongs to ${\rm Aut}(G, \Q)$, a non trivial element 
of ${\rm Aut}(F, \Q)$ is
$$
\gamma^{-1} \circ \begin{pmatrix} 1/2 & 0 \\ 0 & 1 \end{pmatrix} \circ R \circ \begin{pmatrix} 2 & 0 \\ 0 & 1 \end{pmatrix} \circ \gamma =\gamma^{-1} \circ \begin{pmatrix} 0 & 1/2 \\ -2 & -1 \end{pmatrix} \circ \gamma.
$$
Writing
$$
\gamma := \begin{pmatrix} a_1 & a_2 \\ a_3 & a_4 \end{pmatrix},
$$
where the $a_i$ are integers such that $a_1a_4 -a_2 a_3 = \pm1$, we get
\begin{align*} 
\gamma^{-1} \circ \begin{pmatrix} 
0 & 1/2 \\ -2 & -1 \end{pmatrix} \circ \gamma &= \pm \begin{pmatrix} a_4 & -a_2 \\ -a_3 & a_1\end{pmatrix}\cdot 
\begin{pmatrix} 
0 & 1/2 \\ -2 & -1 \end{pmatrix} \cdot \begin{pmatrix} a_1 & a_2 \\ a_3 & a_4 \end{pmatrix}\\
& = \pm \begin{pmatrix} 2a_1a_2 +a_3 (a_2 +a_4/2) & 2 a_2^2 +a_4 (a_2 +a_4/2) \\
-2a_1^2 -a_3 (a_1 +a_3/2)
 & -2a_1a_2 -a_4 (a_1+a_3/2)
\end{pmatrix}.
\end{align*}
Since $a_3$ and $a_4$ are not simultaneously even, the above matrix corresponds to the cases \eqref{2}, \eqref{3} or \eqref{4} of Corollary \ref{illustration1}. This proves the {\it only if} part.
\end{itemize}

\noindent The proof of Corollary \ref{illustration1} is complete assuming the truth of Corollary \ref{illustration2}.

\section{From Theorem \ref{source} to Corollary \ref{illustration2}}
\label{section4}
We adopt the hypotheses of Corollary \ref{illustration2}. So let $F \in {\rm Bin}(d, \Z)$ with $d \geq 3$ satisfy $(C1)$. Let us first show that $[F]_{\rm val}$ breaks into at most two equivalence classes under $\sim_{{\rm GL}(2, \Z)}$. Suppose for the sake of contradiction that $[F]_{\rm val}$ breaks into at least three equivalence classes. Let $F_1$, $F_2$ and $F_3$ be three elements in $[F]_{\rm val}$, that are pairwise not ${\rm GL}(2, \Z)$--equivalent. Since they have the same image, they are ${\rm GL}(2, \Q)$--equivalent by Lemma \ref{existenceofrho}. So write
$$
F_2 = F_1\circ \gamma_1, \quad F_3= F_2 \circ \gamma_2
$$
with $\gamma_1, \gamma_2 \in {\rm GL}(2, \Q)$. Applying Theorem \ref{source} to the forms $F_1$ and $F_2$, we obtain that $\vert \det \gamma_1 \vert \in \{1/2, 2\}$. Similarly, we obtain $\vert \det \gamma_2 \vert \in \{1/2, 2\}$. However, the equality $F_3 = F_1 \circ (\gamma_1 \cdot \gamma_2)$ also implies that 
$$
\vert \det (\gamma_1 \cdot \gamma_2) \vert = \vert \det \gamma_1 \vert \cdot \vert \det \gamma_2 \vert \in \{1/2, 2\},
$$ 
which contradicts the fact that both $\vert \det \gamma_1 \vert$ and $\vert \det \gamma_2 \vert$ belong to $\{1/2, 2\}$. 

We now claim that $[F]_{\rm val}$ breaks into two equivalence classes if and only if there exists $G \in [F]_{\rm val}$ such that $\{\sigma \in{\rm Aut}(G, \Q) : \sigma^3 = {\rm id}\} \subseteq {\rm GL}(2, \Z)$ and $3 \mid \vert {\rm Aut}(G, \Q)\vert$. The {\it if } part is an immediate consequence of Lemma \ref{433}, applied to the form $G$. The {\it only if} part is a consequence of Theorem \ref{source}.

The final part of Corollary \ref{illustration2} is a consequence of Lemma \ref{433}.

\section{Strategy of the proof of Theorem \ref{source}}
\label{strategy} 
We now pass to the proof of Theorem \ref{source}. This proof is quite complicated, so we take some time to present its main steps.

We start from two forms $F_1$ and $F_2$ satisfying \eqref{hypoforF1F2}, i.e. $F_1$ and $F_2$ have the same image but are not ${\rm GL}(2, \Z)$--equivalent. By Lemma \ref{existenceofrho}, there exists some $\rho \in {\rm GL}(2, \Q) \setminus {\rm GL}(2, \Z)$ such that $F_1\circ \rho = F_2. $ This $\rho$ is not unique, however we choose a $\rho$ or $\rho^{-1}$ , which, in some sense, is economical in terms of denominators (see \eqref{minimalindex}). Bringing $\rho$ into Smith normal form leads to the existence of two forms $G_1$ and $G_2$ respectively ${\rm GL}(2, \Z)$--equivalent to $F_1$ and $F_2$ and integers $D$ and $\nu$ such that $G_2(DX, DY)= G_1(X,\nu Y)$ (see \eqref{G1G2Dnu}). We are now dealing with integers only. By construction $G_1$ and $G_2$ have the same image, but they are not ${\rm GL}(2, \Z)$--equivalent. This implies the conditions \eqref{conditionsonDnu} on $D$ and $\nu$.

It is now natural to consider not only the projective surface $\mathbb W (G_1 \cap G_2)$ defined by the equation $G_1(X, Y) - G_2(Z, T) = 0$, but also the set of integers
$$
\{m \in \Z : m = G_1(x, y) = G_2(z, t) \text{ for } (x, y, z, t) \in \Z^4\}. 
$$
Actually, we will prove that most of the $m$ come from points situated on lines of the surface $\mathbb W (G_1 \cap G_2)$. Again by a counting process based on the geometry of varieties over $\Q$, we exhibit a non--trivial covering $\mathcal C$ of $\Z^2$ by lattices corresponding to the lines mentioned above. 

These lines are defined by the isomorphisms between $G_1$ and $G_2$ (see Proposition \ref{Bois-Sarti}). The equality $G_2(DX, DY)= G_1(X,\nu Y)$ produces one such obvious isomorphism. We obtain all the other by composing with automorphisms of $G_1$ or $G_2$. Proposition \ref{proposition6.1} is crucial by giving an explicit covering $\mathcal{C}$ of $\Z^2$ by lattices. For the moment we did not use the information on the structure of the automorphism groups of the forms $F_1$ and $F_2$. Recall that we have
$$
{\rm Aut}(G_i, \Q) \simeq_{{\rm GL}(2, \Z)} {\rm Aut}(F_i, \Q) \quad (i \in \{1, 2\}) \quad \text{ and } \quad {\rm Aut}(F_1, \Q) \simeq_{{\rm GL}(2, \Q)} {\rm Aut}(F_2, \Q).
$$
The end of the proof depends on the complexity of $\mathcal C$. Indeed, its cardinality (number of lattices belonging to $\mathcal C$) is at most $\vert {\rm Aut}(F_1, \Q)\vert /2$ if $-{\rm id}$ belongs to ${\rm Aut}(F_1, \Q)$, and $\vert {\rm Aut}(F_1, \Q)\vert $ otherwise (see Lemma \ref{gamma-gamma}). By Lemma \ref{possibleAut} this cardinality can take the values $1$, $2$, $3$, $4$ and $6$. In this paper, we are concerned with the cardinalities $1$, $2$ and $3$, corresponding to the condition $(C1)$. The condition $(C1)$ avoids substantial difficulties coming from the complexity of the covering $\mathcal C$, since there is no non--trivial covering $\mathcal C$ with cardinality $1$ or $2$. For cardinality $3$, there is only one possibility for $\mathcal C$ (see Lemmas \ref{2lattices} \& \ref{3lattices}).

When the cardinality of $\mathcal C$ is $4$, the situation is more complicated, since there are four possibilities for $\mathcal C$ with cardinality $4$ (proving this in itself is already a non-trivial task). It corresponds to the condition $(C2)$ and it will be treated in \cite{FKD4} by proving that there is no extraordinary form satisfying $(C2)$. This gives Theorem \ref{A}, which will be proved in \cite{FKD4}.

Finally, when the cardinality of $\mathcal C$ is at most $6$, it becomes infeasible to list all possible coverings without the help of a computer. Indeed, there are 9 possibilities for $\mathcal C$ to have a cardinality equal to $5$ and $49$ possibilities for cardinality $6$. The argument splits in many different cases and also requires further assistance from the computer. This is a very delicate part based on the explicit equations of the six lattices of $\mathcal C$ and this leads to Theorem \ref{B}, which will be proved in \cite{FKD3D6}.

\section{A third list of lemmas}
\subsection{About lattices}
In our work, a {\it lattice} means a subgroup of $\Z^2$ with rank two. Let $\Lambda_1, \dots, \Lambda_k$, be $k$ lattices. We say that the lattices $(\Lambda_i)_{1\leq i \leq k} $ are a {\it covering} of $\Z^2$ when their union equals $\Z^2$. A finite intersection of lattices is also a lattice. A lattice $\Lambda$ is a {\it proper lattice} if and only if $\Lambda \neq \Z^2$. The {\it index} of $\Lambda$ is the positive integer 
$$
[\Z^2 : \Lambda] = \vert \Z^2 / \Lambda \vert,
$$
and a lattice is proper if and only if its index is different from one. If $(\vec u, \vec v)$ is a $\Z$--basis of $\Lambda$, then we have 
\begin{equation}
\label{index=det}
[\Z^2 : \Lambda] = \vert \det (\vec u, \vec v) \vert.
\end{equation}

\begin{definition}
\label{associatedlattice} 
Let $\gamma\in {\rm GL}(2, \Q)$. The lattice $L(\gamma)$ associated with $\gamma$ is by definition
$$
L(\gamma) = \left\{(u, v) \in \Z^2 : \gamma(u, v) \in \Z^2 \right\}.
$$ 
\end{definition}

Since the elements of ${\rm GL}(2, \Z)$ are bijections on $\Z^2$ and preserve the index of lattices, we have

\begin{lemma}
\label{conservationofindex}
For every $\gamma \in {\rm GL}(2, \Q)$ and for every $P, Q \in {\rm GL}(2, \Z)$, one has the equalities 
$$ 
L(P\gamma) = L(\gamma), \quad L(\gamma Q) = Q^{-1} (L(\gamma)) \ \ \textup{ and } \ \ \left[\Z^2 : L(P \gamma Q)\right] = \left[\Z^2 : L(\gamma)\right].
$$
\end{lemma}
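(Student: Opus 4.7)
The plan is to dispatch the three claims one by one, each using only the defining property of $L(\cdot)$ from Definition \ref{associatedlattice} together with the fact that every $M \in {\rm GL}(2,\Z)$ and its inverse have integer entries, so both $M$ and $M^{-1}$ preserve $\Z^2$ setwise and bijectively. The whole lemma is essentially a bookkeeping exercise: left multiplication by a matrix in ${\rm GL}(2,\Z)$ does not affect integrality of the output, whereas right multiplication by such a matrix is a change of $\Z$--basis on the input side.

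For the first equality I would argue by double inclusion on the defining condition. If $(u,v) \in L(\gamma)$ then $\gamma(u,v) \in \Z^2$, whence $P\gamma(u,v) \in \Z^2$ since $P$ has integer entries; conversely, if $P\gamma(u,v) \in \Z^2$, applying $P^{-1} \in {\rm GL}(2,\Z)$ to this vector gives $\gamma(u,v) \in \Z^2$. For the second equality, I would exploit that $Q$ restricts to a bijection of $\Z^2$ onto itself. Setting $(u',v') := Q(u,v)$, the pair $(u,v)$ ranges over $\Z^2$ exactly when $(u',v')$ does, and the condition $\gamma Q(u,v) = \gamma(u',v') \in \Z^2$ is precisely $(u',v') \in L(\gamma)$. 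Hence $(u,v) \in L(\gamma Q)$ if and only if $Q(u,v) \in L(\gamma)$, which is the desired equality $L(\gamma Q) = Q^{-1}(L(\gamma))$.

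The third equality then combines the previous two: $L(P\gamma Q) = L(\gamma Q) = Q^{-1}(L(\gamma))$. Since $Q^{-1}\in {\rm GL}(2,\Z)$ is an automorphism of the abelian group $\Z^2$, it carries any sublattice to one of the same index, as one may verify directly from \eqref{index=det} by observing that a $\Z$--basis $(\vec u,\vec v)$ of $L(\gamma)$ is carried to the $\Z$--basis $(Q^{-1}\vec u, Q^{-1}\vec v)$ of $Q^{-1}(L(\gamma))$ with $|\det(Q^{-1}\vec u, Q^{-1}\vec v)| = |\det Q^{-1}|\cdot |\det(\vec u,\vec v)| = |\det(\vec u,\vec v)|$. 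The only genuine point of attention is keeping the left--right conventions straight in the second step; there is no real obstacle beyond that.
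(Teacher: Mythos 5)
Your proof is correct and follows exactly the reasoning the paper itself invokes (the paper states the lemma without a written proof, appealing to the fact that elements of ${\rm GL}(2,\Z)$ act bijectively on $\Z^2$ and preserve indices of lattices). Your write-up simply makes those observations explicit via double inclusion, the change of variable $(u',v') = Q(u,v)$, and the determinant formula \eqref{index=det}, with no gaps.
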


We now give several lemmas. We will omit the proofs.

\begin{lemma}
\label{lemma5.4} 
Let $\gamma \in {\rm GL}(2, \Q)$. Then
\begin{itemize}
\item the equality $L(\gamma) = \Z^2$ holds if and only if all entries of $\gamma$ are integers,
\item the equalities $L(\gamma) = L(\gamma^{-1}) = \Z^2$ hold if and only if $\gamma$ belongs to ${\rm GL}(2, \Z)$.
\end{itemize}
\end{lemma}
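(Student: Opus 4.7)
The plan is to unwind the definition of $L(\gamma)$ directly on the standard basis vectors $(1,0)$ and $(0,1)$ of $\Z^2$, and then bootstrap from the first bullet to the second.

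For the first bullet, the implication that integer entries force $L(\gamma) = \Z^2$ is immediate: if $\gamma \in M_2(\Z)$ then $\gamma$ maps $\Z^2$ into $\Z^2$, so every $(u,v) \in \Z^2$ lies in $L(\gamma)$. For the converse, I would simply test $L(\gamma) = \Z^2$ against $(1,0)$ and $(0,1)$: writing $\gamma = \begin{pmatrix} a & b \\ c & d \end{pmatrix}$, the condition $\gamma(1,0)^t = (a,c)^t \in \Z^2$ forces $a, c \in \Z$, and $\gamma(0,1)^t = (b,d)^t \in \Z^2$ forces $b, d \in \Z$.

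For the second bullet, the forward direction is essentially a corollary of the first applied to both $\gamma$ and $\gamma^{-1}$. If $\gamma \in {\rm GL}(2,\Z)$, then $\gamma^{-1} \in {\rm GL}(2, \Z)$ as well (since $\det\gamma \in \{\pm 1\}$ guarantees the adjugate formula produces an integral inverse), so both matrices have integer entries and the first bullet gives $L(\gamma) = L(\gamma^{-1}) = \Z^2$. For the reverse direction, the first bullet tells us both $\gamma$ and $\gamma^{-1}$ lie in $M_2(\Z)$; but then $\det\gamma$ and $\det\gamma^{-1}$ are both integers whose product is $1$, forcing $\det\gamma = \pm 1$, i.e.\ $\gamma \in {\rm GL}(2,\Z)$.

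There is no real obstacle here: each implication is a one-line consequence of evaluating $\gamma$ on standard basis vectors and of the elementary characterisation of ${\rm GL}(2,\Z)$ as the integer matrices of determinant $\pm 1$. The lemma is truly just the unpacking of Definition~\ref{associatedlattice}, which is presumably why the authors chose to omit the proof.
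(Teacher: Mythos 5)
Your proof is correct and is exactly the elementary unpacking of Definition \ref{associatedlattice} that the paper has in mind (the authors explicitly omit the proofs of these lemmas as routine). Evaluating on the standard basis vectors for the first bullet and using the determinant identity $\det\gamma \cdot \det\gamma^{-1} = 1$ for the second is precisely the intended argument, with no gaps.
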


\begin{lemma}
\label{gamma-gamma}
We have $L(- \gamma) = L(\gamma)$ for every $\gamma \in {\rm GL}(2, \Q)$.
\end{lemma}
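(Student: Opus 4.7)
The statement is essentially immediate from the preceding material, so the ``plan'' will be short. My approach is to reduce to Lemma \ref{conservationofindex} by observing that $-\gamma = P\gamma$ with $P = -{\rm id} \in {\rm GL}(2, \Z)$.

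More concretely, the first step is to note that the matrix $-{\rm id} = \begin{pmatrix} -1 & 0 \\ 0 & -1 \end{pmatrix}$ belongs to ${\rm GL}(2, \Z)$, so the first identity of Lemma \ref{conservationofindex} applies with this choice of $P$ and yields $L((-{\rm id})\gamma) = L(\gamma)$. Since matrix multiplication gives $(-{\rm id})\gamma = -\gamma$, we obtain $L(-\gamma) = L(\gamma)$, as required.

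If one prefers to argue directly from Definition \ref{associatedlattice} without invoking Lemma \ref{conservationofindex}, the argument is a one-liner: for any $(u, v) \in \Z^2$, the vector $\gamma(u, v)$ lies in $\Z^2$ if and only if $-\gamma(u, v)$ lies in $\Z^2$ (since $\Z^2$ is closed under negation), which is precisely the assertion that $(u, v) \in L(\gamma) \iff (u, v) \in L(-\gamma)$.

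There is no real obstacle here; the content of the lemma is just recording a trivial symmetry of the definition that will presumably be used repeatedly in the sequel (for example when pairing an automorphism $\sigma$ of order $3$ with the conjugate involution $-\sigma$, or when normalizing signs of matrices coming from Smith normal form in the strategy outlined in Section~\ref{strategy}). I would therefore write the proof as a single sentence applying Lemma \ref{conservationofindex}.
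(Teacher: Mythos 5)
Your proof is correct: both routes you give (applying Lemma \ref{conservationofindex} with $P=-{\rm id}\in{\rm GL}(2,\Z)$, or arguing directly from Definition \ref{associatedlattice} using that $\Z^2$ is stable under negation) are valid, and this is exactly the kind of immediate verification the paper has in mind, since it explicitly omits the proof of this lemma as trivial. Nothing further is needed.
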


\begin{lemma}
\label{existenceofC} 
For every lattice $\Lambda \subseteq \Z^2$ there exists an integer $C \geq 1$ such that $C \Z^2 \subseteq \Lambda$.
\end{lemma}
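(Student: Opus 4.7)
\medskip

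The plan is to exploit the fact that $\Lambda$ is a subgroup of $\Z^2$ with the \emph{same} rank as $\Z^2$, so the quotient $\Z^2/\Lambda$ is finite. First I would fix any $\Z$-basis $(\vec u, \vec v)$ of $\Lambda$, noting that such a basis exists because $\Lambda$ is a free abelian group of rank $2$. By \eqref{index=det}, the index $n := [\Z^2 : \Lambda] = |\det(\vec u, \vec v)|$ is a well-defined positive integer (non-zero precisely because $\vec u$ and $\vec v$ are $\Z$-linearly independent, i.e., $\Lambda$ has rank $2$).

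Next, I would invoke Lagrange's theorem applied to the finite abelian group $\Z^2/\Lambda$ of order $n$. For any $w \in \Z^2$, the coset $w + \Lambda$ has order dividing $n$ in $\Z^2/\Lambda$, so $n \cdot (w + \Lambda) = \Lambda$, which is to say $nw \in \Lambda$. Since this holds for every $w \in \Z^2$, we obtain $n \Z^2 \subseteq \Lambda$, and the choice $C = n$ settles the lemma.

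There is essentially no obstacle here; the only point worth checking is that $\Lambda$ genuinely has finite index in $\Z^2$, which is immediate from the hypothesis that $\Lambda$ has rank two inside $\Z^2$ (equivalently, $\det(\vec u, \vec v) \neq 0$). If one preferred to avoid quotient groups, an alternative route is to apply the Smith normal form to the matrix $(\vec u, \vec v)$: after multiplying on the left and right by elements of ${\rm GL}(2, \Z)$, the lattice $\Lambda$ becomes $d_1 \Z \oplus d_2 \Z$ with $d_1 \mid d_2$, and then $C = d_2$ (or simply $C = d_1 d_2 = n$) does the job. Either approach gives the same bound $C = [\Z^2 : \Lambda]$, which is in fact the smallest such integer.
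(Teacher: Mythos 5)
Your main argument is correct, but it takes a different route from the paper. The paper picks a basis $(\vec u, \vec v)$ of $\Z^2$ adapted to $\Lambda$, i.e.\ such that $(d_1\vec u, d_2\vec v)$ is a basis of $\Lambda$ for some positive integers $d_1, d_2$ (the elementary divisor / Smith normal form statement), and then simply takes $C = d_1 d_2$; this is essentially the alternative you sketch at the end. Your primary argument instead works in the finite quotient $\Z^2/\Lambda$: once one knows the index $n = [\Z^2:\Lambda]$ is finite (which you justify via \eqref{index=det}), Lagrange's theorem gives $n w \in \Lambda$ for every $w \in \Z^2$, so $C = n$ works. This is a perfectly valid and arguably more elementary argument, in that it replaces the adapted-basis theorem by the order-of-an-element argument in a finite abelian group; the paper's route has the mild advantage of exhibiting the lattice concretely and of matching the Smith normal form machinery it uses elsewhere (e.g.\ in Lemma \ref{lemma5.10}). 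One incidental remark of yours is wrong, though it does not affect the lemma: $C = [\Z^2:\Lambda]$ is \emph{not} in general the smallest admissible integer. The smallest $C$ is the exponent of $\Z^2/\Lambda$, namely the larger elementary divisor $d_2$; for instance $\Lambda = 2\Z^2$ has index $4$ but already $2\Z^2 \subseteq \Lambda$. Since the lemma only asks for existence of some $C \geq 1$, this slip is harmless.
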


\begin{proof} 
There exists a $\Z$--basis $(\vec u, \vec v)$ of $ \Z^2$ and positive integers $d_1$ and $d_2$ such that $(d_1 \vec u, d_2 \vec v)$ is a $\Z$--basis of $\Lambda$. Choose $C = d_1d_2$ for instance. 
\end{proof}

\subsection{\texorpdfstring{Covering $\Z^2$ with lattices}{Covering the two-dimensional lattice}}
We are now concerned with coverings made with two or three lattices.

\begin{lemma}
\label{2lattices}
Let $(\Lambda_i)_{1\leq i \leq 2}$ be a covering. Then at least one of these lattices is not proper.
\end{lemma}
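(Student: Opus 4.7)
The plan is to argue by contradiction, exploiting the fact that $\Z^2$ is an abelian group and that a group is never the set-theoretic union of two proper subgroups.

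Suppose, toward a contradiction, that both $\Lambda_1$ and $\Lambda_2$ are proper, i.e.\ $\Lambda_1 \neq \Z^2$ and $\Lambda_2 \neq \Z^2$. Since $\Lambda_1 \cup \Lambda_2 = \Z^2$ by the covering assumption, there must exist $\vec u \in \Lambda_1 \setminus \Lambda_2$ and $\vec v \in \Lambda_2 \setminus \Lambda_1$ (otherwise one of the $\Lambda_i$ would contain the other, forcing that larger one to be all of $\Z^2$). Now consider $\vec u + \vec v \in \Z^2$. By the covering hypothesis, $\vec u + \vec v$ belongs to $\Lambda_1$ or to $\Lambda_2$. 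If $\vec u + \vec v \in \Lambda_1$, then $\vec v = (\vec u + \vec v) - \vec u \in \Lambda_1$, contradicting the choice of $\vec v$; symmetrically, if $\vec u + \vec v \in \Lambda_2$, then $\vec u \in \Lambda_2$, contradicting the choice of $\vec u$. This contradiction shows that at least one $\Lambda_i$ equals $\Z^2$.

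The argument is completely elementary and does not require anything beyond the group structure of $\Z^2$ together with Definition/notation of proper lattice introduced just above. There is essentially no obstacle: the only subtle point is to justify that the sets $\Lambda_1 \setminus \Lambda_2$ and $\Lambda_2 \setminus \Lambda_1$ are both nonempty under the assumption that neither lattice is all of $\Z^2$, which follows at once because $\Lambda_1 \subseteq \Lambda_2$ combined with the covering equality would force $\Lambda_2 = \Z^2$, contradicting properness.
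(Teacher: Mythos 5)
Your proof is correct: it is the classical fact that a group is never the union of two proper subgroups, and the justification that both difference sets $\Lambda_1\setminus\Lambda_2$ and $\Lambda_2\setminus\Lambda_1$ are nonempty is handled properly. However, it takes a genuinely different route from the paper. The paper argues by pigeonhole on the three explicit points $(1,0)$, $(0,1)$, $(1,1)$: one lattice must contain two of them, and any two of these points have determinant $\pm 1$, hence generate $\Z^2$, so that lattice is not proper. Your argument is more general and more purely group-theoretic (it works for an arbitrary group covered by two subgroups and never uses that the $\Lambda_i$ have rank two or finite index), which is a genuine economy here. The paper's point-counting method, on the other hand, is chosen because it scales: the very next result, Lemma \ref{3lattices}, classifies coverings of $\Z^2$ by three proper lattices using the same device with four points, and the union-of-two-subgroups argument does not extend to that setting (a group can be the union of three proper subgroups, as the mod-$2$ picture in Lemma \ref{3lattices} itself shows). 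So your proof is a valid and slightly slicker replacement for this particular lemma, but it does not set up the technique the paper reuses immediately afterwards.
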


\begin{proof}
Consider the three points $(1, 0)$, $(0,1)$ and $(1,1)$. Then at least one of the $\Lambda_i$, say $\Lambda_1$, contains at least two of these three points. Then for any choice, these two points generate $\Z^2$. Thus $\Lambda_1$ is not proper.
\end{proof}

We now study the covering of $\Z^2$ by three lattices.

\begin{lemma}
\label{3lattices} 
Let $\Lambda_0$, $\Lambda_1$ and $\Lambda_2 \subset \Z^2$ be three proper lattices such that $(\Lambda_i)_{0\leq i \leq 2}$ is a covering. Then, up to permutation, we have
\begin{align*}
\Lambda_0 &= \{(u, v) \in \Z^2 : u \equiv 0 \bmod 2\}, \\
\Lambda_1 &= \{(u, v) \in \Z^2 : v \equiv 0 \bmod 2\}, \\
\Lambda_2 &= \{(u, v) \in \Z^2 : u +v \equiv 0 \bmod 2\}.
\end{align*}
\end{lemma}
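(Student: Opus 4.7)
The plan is to probe the covering with a small list of explicit integer points and exploit the fact, already used in Lemma \ref{2lattices}, that any two of the points $(1,0)$, $(0,1)$, $(1,1)$ generate $\Z^2$, so no proper lattice may contain two of them.

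First I would apply pigeonhole to the three points $(1,0), (0,1), (1,1)$: each lies in some $\Lambda_i$ by the covering hypothesis, and if a single $\Lambda_i$ contained two of them it would equal $\Z^2$, contradicting properness. Hence each of the three lattices contains exactly one of these points, and after a permutation of the indices I may assume
\[
(1,0) \in \Lambda_0, \quad (0,1) \in \Lambda_1, \quad (1,1) \in \Lambda_2.
\]

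Next I would probe with $(1,-1)$. If it lay in $\Lambda_0$, subtracting $(1,0)$ would put $(0,-1)$ and hence $(0,1)$ into $\Lambda_0$, forcing $\Lambda_0 = \Z^2$; similarly $(1,-1) \in \Lambda_1$ would force $\Lambda_1 = \Z^2$. So $(1,-1) \in \Lambda_2$, and taking its sum and difference with $(1,1)$ places $(2,0)$ and $(0,2)$ into $\Lambda_2$. A symmetric analysis of $(2,1)$ — which is forbidden from $\Lambda_0$ (it would produce $(0,1)$) and from $\Lambda_2$ (it would produce $(1,0)$) — places $(2,1) \in \Lambda_1$, and in parallel $(1,2) \in \Lambda_0$. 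Taking differences with the known generators yields $(2,0) \in \Lambda_1$ and $(0,2) \in \Lambda_0$.

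At this stage $\Lambda_0 \supseteq \Z(1,0) + \Z(0,2)$, $\Lambda_1 \supseteq \Z(0,1) + \Z(2,0)$ and $\Lambda_2 \supseteq \Z(1,1) + \Z(2,0)$, which are precisely the three index-$2$ sublattices described in the statement. Since each $\Lambda_i$ is proper and the only strict enlargement of an index-$2$ sublattice inside $\Z^2$ is $\Z^2$ itself, each inclusion must be an equality, which finishes the proof. There is no serious obstacle here: the only real choice is of the probe points beyond the initial three. A superficially cleaner route via reduction modulo $2$ in $(\Z/2\Z)^2$ does not shortcut the argument, because a proper lattice may still surject onto $(\Z/2\Z)^2$, so one is ultimately forced back to essentially the same point-by-point analysis.
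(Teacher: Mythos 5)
Your proof is correct and follows essentially the same strategy as the paper's: probing with the small points $(1,0)$, $(0,1)$, $(1,1)$, $(1,-1)$, $(2,1)$, $(1,2)$, using that a proper lattice cannot contain two vectors of determinant $\pm 1$, and finishing by maximality of index-$2$ sublattices. The only cosmetic difference is that you apply pigeonhole to three points and then place $(1,-1)$ separately, whereas the paper pigeonholes all four points $(1,-1),(1,0),(1,1),(0,1)$ at once to identify the even-sum lattice first; the content is the same.
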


\begin{proof} 
The proof is similar to the above proof with the difference that we consider four points. Let $\Lambda_0, \Lambda_1, \Lambda_2$ be three proper sublattices satisfying 
\begin{equation*}
\Lambda_0 \cup \Lambda_1 \cup \Lambda_2 = \Z^2.
\end{equation*}
Define the following four points in $\Z^2$
$$
P_1 = (1, -1), \quad P_2 = (1, 0), \quad P_3 = (1, 1), \quad P_4 = (0, 1).
$$
By appealing to \eqref{index=det}, we check that for all distinct $i, j \in \{1, \dots, 4\}$
\begin{equation}
\label{735}
P_i\Z \oplus P_j\Z = \Z^2 \Longleftrightarrow \{i, j\} \neq \{1, 3\}.
\end{equation}
By the pigeonhole principle one of the $\Lambda_i$ must contain at least two $P_k$. Up to renumbering we may assume that $\Lambda_2$ contains at least two $P_k$. But since $\Lambda_2$ is a proper sublattice, this forces 
$$
\Lambda_2 = P_1\Z \oplus P_3\Z = \{(u, v) \in \Z^2 : u + v \equiv 0 \bmod 2\}.
$$
By \eqref{735} we also get that $(0, 1) \in \Lambda_0$ and $(1, 0) \in \Lambda_1$, after a potential renumbering.

Now look at the points $(2, 1)$ and $(1, 2)$. The point $(2, 1)$ is not in $\Lambda_2$, so it must be that $\Lambda_0$ or $\Lambda_1$ contains $(2, 1)$. But since $\Lambda_1$ is a proper sublattice containing $(1, 0)$, we see that $(2, 1) \not \in \Lambda_1$. Therefore we conclude that $(2, 1) \in \Lambda_0$ and similarly $(1, 2) \in \Lambda_1$. This gives the lemma.
\end{proof}
 
The description of the coverings of $\Z^2$ by four, five or six lattices is much more intricate. It will be given in \cite{FKD4} and \cite{FKD3D6}. See also the end of \S \ref{strategy} for more details.

\subsection{About matrices}
The first lemma gives a {\it canonical } way to express an element of ${\rm GL}(2, \Q)$ in terms of integers.

\begin{lemma}
\label{canonicalmatrix} 
Every matrix $M \in {\rm GL}(2, \Q)$ can be uniquely written as
$$
M = \frac{N}{D} \begin{pmatrix} m_1 & m_2 \\ m_3 & m_4 \end{pmatrix},
$$
where the integers $m_1$, $m_2$, $m_3$, $m_4$, $N$ and $D$ satisfy the conditions
$$
N, D \geq 1, \quad \gcd(N, D) = 1, \quad \gcd(m_1, m_2, m_3, m_4) = 1.
$$
In particular when $\det M = \pm 1$, one has $N = 1$.
\end{lemma}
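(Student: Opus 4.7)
The plan is to separately handle existence and uniqueness, and then deduce the $\det M = \pm 1$ case from the uniqueness.

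For existence, I would start from an arbitrary $M \in {\rm GL}(2, \Q)$ and clear denominators: let $D'$ be the least common multiple of the denominators of the four entries of $M$, so that $D' M$ has integer entries. Let $g \geq 1$ be the greatest common divisor (in absolute value) of these four integers. Then $(D'/g) M$ has integer entries whose $\gcd$ equals $1$. Writing $g/D' = N/D$ in lowest terms with $N, D \geq 1$ (possible because $g, D' \geq 1$), one obtains a representation $M = (N/D)(m_{ij})$ with the four required conditions.

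For uniqueness, I would suppose two such representations
$$
\frac{N}{D} \begin{pmatrix} m_1 & m_2 \\ m_3 & m_4 \end{pmatrix} = \frac{N'}{D'} \begin{pmatrix} m'_1 & m'_2 \\ m'_3 & m'_4 \end{pmatrix}
$$
are equal. Cross multiplying, $N D' m_i = N' D m'_i$ for $i = 1, 2, 3, 4$. Taking the $\gcd$ of the four integers on each side (which is positive since the primitive vectors $(m_1, m_2, m_3, m_4)$ and $(m'_1, m'_2, m'_3, m'_4)$ have $\gcd$ equal to $1$), this forces $N D' = N' D$. Combined with $\gcd(N, D) = \gcd(N', D') = 1$ and the positivity $N, D, N', D' \geq 1$, the standard coprimality argument gives $N = N'$ and $D = D'$, hence $m_i = m'_i$ for each $i$.

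Finally, if $\det M = \pm 1$, then
$$
\det M = \frac{N^2}{D^2} (m_1 m_4 - m_2 m_3) = \pm 1,
$$
so $N^2 (m_1 m_4 - m_2 m_3) = \pm D^2$. In particular $N^2 \mid D^2$, which gives $N \mid D$; combined with $\gcd(N, D) = 1$ this yields $N = 1$. The main (minor) subtlety will just be to keep track of the positivity conventions $N, D \geq 1$ in the existence step so that the uniqueness argument applies directly; no deeper obstacle is anticipated, as the whole statement is essentially the uniqueness of a primitive representation of a rational scalar multiple of an integer matrix.
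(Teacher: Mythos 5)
Your proof is correct and complete: the existence step (clear denominators, divide out the content, put the scalar in lowest terms), the uniqueness step via $ND'm_i = N'Dm'_i$ and taking gcds, and the deduction $N=1$ from $N^2(m_1m_4-m_2m_3)=\pm D^2$ with $\gcd(N,D)=1$ are all sound. The paper itself offers no argument (its proof reads ``This is straightforward.''), and your write-up is precisely the routine verification it has in mind.
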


\begin{proof}
This is straightforward.
\end{proof}

We use the canonical form to compute the index of the lattice from Definition \ref{associatedlattice}:

\begin{lemma}
\label{lemma5.10}
Let $f \in {\rm GL}(2, \Q)$, whose associated matrix $M_f$ has canonical form given by
$$
M_f := \frac{N}{D}
\begin{pmatrix} 
m_1& m_2 \\ m_3 & m_4
\end{pmatrix}.
$$
Then the lattice $L(f)$ satisfies the equality
$$
[\Z^2 : L (f)] = \frac {D^2}{\gcd(D, m_1m_4 - m_2m_3)}.
$$
Finally, $[\Z^2 : L(f)]$ is an integral multiple of $\vert \det f \vert^{-1}$.
\end{lemma}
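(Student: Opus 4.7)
My plan is to translate the defining condition of $L(f)$ into a pair of congruences modulo $D$, and then compute the resulting index via the Smith normal form of the integer matrix $M := \begin{pmatrix} m_1 & m_2 \\ m_3 & m_4 \end{pmatrix}$.

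First I would apply the canonical form to a column vector $(u, v)^T \in \Z^2$, obtaining
$$M_f \begin{pmatrix} u \\ v \end{pmatrix} = \frac{N}{D} \, M \begin{pmatrix} u \\ v \end{pmatrix},$$
so $(u, v) \in L(f)$ if and only if $D$ divides each entry of $N \cdot M(u, v)^T$. Since $\gcd(N, D) = 1$, this is equivalent to $M(u, v)^T \equiv 0 \pmod{D}$. In particular $D\Z^2 \subseteq L(f)$, and the tower law yields
$$[\Z^2 : L(f)] = \frac{[\Z^2 : D\Z^2]}{[L(f) : D\Z^2]} = \frac{D^2}{|\ker \overline{M}|},$$
where $\overline{M} \colon (\Z/D\Z)^2 \to (\Z/D\Z)^2$ is the reduction of $M$ modulo $D$.

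The heart of the argument is then to compute $|\ker \overline{M}|$. The hypothesis $\gcd(m_1, m_2, m_3, m_4) = 1$ forces the first elementary divisor of $M$ over $\Z$ to equal $1$, so the Smith normal form of $M$ is $\mathrm{diag}(1, |\delta|)$ with $\delta := m_1 m_4 - m_2 m_3$. Choosing $U, V \in \mathrm{GL}(2, \Z)$ with $UMV = \mathrm{diag}(1, |\delta|)$, the reductions $\overline{U}, \overline{V}$ remain invertible modulo $D$, so $\ker \overline{M}$ has the same cardinality as $\ker \overline{\mathrm{diag}(1, |\delta|)}$. A direct count shows the latter equals $\gcd(D, |\delta|) = \gcd(D, \delta)$, yielding the stated formula
$$[\Z^2 : L(f)] = \frac{D^2}{\gcd(D, m_1 m_4 - m_2 m_3)}.$$

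For the final assertion, $\det M_f = (N/D)^2 \delta$ gives $|\det f|^{-1} = D^2/(N^2 |\delta|)$, and
$$\frac{[\Z^2 : L(f)]}{|\det f|^{-1}} = \frac{N^2 |\delta|}{\gcd(D, \delta)},$$
which is an integer since $\gcd(D, \delta)$ divides $\delta$. The computation is essentially routine; the only slightly delicate point is the reduction of the kernel count to Smith normal form, where the primitivity hypothesis $\gcd(m_1, m_2, m_3, m_4) = 1$ is precisely what guarantees the first invariant factor equals $1$ and allows the answer to depend only on $D$ and $\delta$.
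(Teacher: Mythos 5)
Your proof is correct and follows essentially the same route as the paper: both arguments hinge on the Smith normal form of the integer matrix being $\mathrm{diag}(1, \vert m_1m_4 - m_2m_3\vert)$ (which is where the primitivity $\gcd(m_1, m_2, m_3, m_4) = 1$ enters), together with the coprimality $\gcd(N, D) = 1$. The only difference is one of packaging: the paper transports the lattice itself to the diagonal case using the ${\rm GL}(2, \Z)$--invariance of the index (Lemma \ref{conservationofindex}) and reads off the index directly, whereas you reduce modulo $D$ and count the kernel via the tower law — the two computations are interchangeable.
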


\begin{proof} 
Use the Smith normal form (see \cite[Theorem 2.4.12]{Coh} for instance) to write the matrix $M_f$ as
$$
M_f = \frac ND \cdot U \cdot \begin{pmatrix} 1 & 0 \\ 0 & \pm (m_1m_4-m_2m_3)\end{pmatrix} \cdot V,
$$
where $U$ and $V$ belong to ${\rm GL}(2, \Z)$. Then by Lemma \ref{conservationofindex} we have
$$
[\Z^2 : L(f)] = [\Z^2 : L],
$$ 
where $L$ is the lattice
$$
L:= \{(x, y) \in \Z^2 : \bigl((N/D) x, (N/D)(m_1m_4-m_2m_3)y\bigr) \in \Z^2\}.
$$
Since $\gcd(N, D) = 1$, the index of $L$ equals $D \cdot D/\gcd(D, m_1m_4 - m_2m_3)$. For the last part, notice that our formula implies that $[\Z^2 : L(f)]$ is an integral multiple of $D^2/(m_1m_4 - m_2m_3)$, which is a multiple of $D^2/(N^2 (m_1m_4-m_2m_3)) = 1/ \det f$.
\end{proof}
 
\subsection{About values of polynomials and application to binary forms}
The following lemma has appeared in various places in the literature. Bhargava attributes this result to P\' olya \cite{Pol}.

\begin{lemma}
\label{Bhar} 
Let $f (X)\in \Z [X]$ be a primitive polynomial of degree $k \geq 0$. Then
$$
\gcd(\{f(a) : a \in \Z\}) \mid k!.
$$
\end{lemma}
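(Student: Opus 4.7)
The plan is to pass from the monomial basis to the binomial basis $\left\{\binom{X}{j}\right\}_{j \geq 0}$, because the latter is well adapted both to finite differences (hence to values of $f$) and to integrality after multiplication by $k!$.

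First I would expand $f$ in the binomial basis. Using the identity $X^i = \sum_{j=0}^i S(i,j)\, j!\binom{X}{j}$ (with $S(i,j)$ Stirling numbers of the second kind), one obtains a unique decomposition
\[
f(X) = \sum_{j=0}^{k} c_j \binom{X}{j},
\]
with integer coefficients $c_j \in \Z$. Moreover the $c_j$ admit the finite-difference expression
\[
c_j = (\Delta^j f)(0) = \sum_{i=0}^{j} (-1)^{j-i} \binom{j}{i} f(i),
\]
so each $c_j$ is an integer linear combination of values of $f$ at integer points. Denoting by $d := \gcd(\{f(a) : a \in \Z\})$ the gcd appearing in the statement, this shows $d \mid c_j$ for every $j \in \{0, 1, \ldots, k\}$.

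Next I would multiply through by $k!$. For $0 \leq j \leq k$, the polynomial $k!\binom{X}{j} = \frac{k!}{j!}\cdot X(X-1)\cdots (X-j+1)$ has integer coefficients, so
\[
k!\, f(X) = \sum_{j=0}^{k} c_j \cdot k!\binom{X}{j}
\]
is a $\Z$-linear combination, with coefficients $c_j$, of polynomials in $\Z[X]$. Writing $f(X) = \sum_{i=0}^k a_i X^i$ and reading off coefficients, this yields $d \mid k!\, a_i$ for every $i \in \{0, 1, \ldots, k\}$.

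Finally, the primitivity hypothesis gives $\gcd(a_0, a_1, \ldots, a_k) = 1$, hence
\[
d \,\bigm|\, \gcd_{0 \leq i \leq k}(k!\, a_i) = k!\cdot\gcd_{0 \leq i \leq k}(a_i) = k!,
\]
which is the desired conclusion. There is no real obstacle: the only subtle point is to justify that the $c_j$ are integers and that $k!\binom{X}{j} \in \Z[X]$ for $j \leq k$, both of which are elementary.
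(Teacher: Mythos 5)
Your proof is correct. Note, though, that the paper does not actually prove this lemma: it disposes of the case $k=0$ as trivial and otherwise cites Bhargava's Theorem 2, which is established there through the machinery of $p$-orderings and generalized factorials. Your argument is instead a self-contained, elementary one in the classical P\'olya spirit: writing $f(X)=\sum_{j=0}^k c_j\binom{X}{j}$ with $c_j=(\Delta^j f)(0)\in\Z$ shows that $d:=\gcd(\{f(a):a\in\Z\})$ divides every $c_j$ (each being a $\Z$-combination of values of $f$), and then multiplying by $k!$ and using $k!\binom{X}{j}\in\Z[X]$ for $j\le k$ gives $d\mid k!\,a_i$ for every coefficient $a_i$ of $f$, whence $d\mid k!$ by primitivity. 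All the steps you flag as needing justification are indeed elementary and correctly handled (integrality of the $c_j$ via the finite-difference formula, integrality of $k!\binom{X}{j}$, and the edge case $k=0$ is covered automatically). What your route buys is a proof that could be inlined in the paper with no external reference; what the citation to Bhargava buys is a framework that generalizes the statement far beyond $\Z$ (arbitrary subsets of Dedekind rings), which is unnecessary for the application here.
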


\begin{proof} 
This statement is trivial when $k=0$. For $k \geq 1$, see \cite [Theorem 2]{Bhargava} for a proof. 
\end{proof}

We apply this result to situations concerning binary forms.

\begin{lemma}
\label{Lemma5.6} 
Let $d \geq 1$ be an integer and let $H(X, Y)$ be a binary form with integer coefficients and with degree $d$. Suppose that there exists positive integers $D$, $N$ and $\nu$ such that $\gcd(D, N) =1$ and such that
\begin{equation}
\label{713}
D^d H(\Z^2) = \left[N^d H \circ \begin{pmatrix} 1 & 0 \\ 0 &\nu \end{pmatrix}\right] (\Z^2).
\end{equation}
Then we have $N=1$ and $D \mid \nu$.
\end{lemma}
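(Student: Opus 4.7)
The plan is to first reduce to the case where $H$ is primitive, then extract pointwise divisibility statements from the set equality, and finally convert these into bounds on $N$ and $D$ via Lemma~\ref{Bhar} combined with Legendre's formula. Writing $H = e H'$ with $e$ the gcd of the coefficients of $H$ and $H'$ primitive, the hypothesis becomes $eD^d H'(\Z^2) = eN^d H'(\Z \times \nu\Z)$, and dividing through by $e$ reduces to the primitive case without altering $D$, $N$, or $\nu$.

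To prove $N = 1$, I would first extract pointwise divisibility: any element $D^d H(x_0, y_0)$ of the left side equals some $N^d H(x_1, \nu y_1)$, so $N^d \mid D^d H(x_0, y_0)$; since $\gcd(D^d, N^d) = 1$, we obtain $N^d \mid H(x_0, y_0)$ for every $(x_0, y_0) \in \Z^2$. Specialising to $y_0 = 1$ gives $N^d \mid \gcd_x H(x, 1)$. The polynomial $H(X, 1)$ inherits primitivity from $H$ and has degree at most $d$, so Lemma~\ref{Bhar} yields $\gcd_x H(x, 1) \mid d!$, hence $N^d \mid d!$. For any prime $p$, Legendre's formula gives the strict bound $v_p(d!) < d/(p - 1) \leq d$, so no $p^d$ divides $d!$; thus $N$ has no prime factor and $N = 1$.

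With $N = 1$, the symmetric version of the same argument yields $D^d \mid H(x, \nu y)$ for every $(x, y) \in \Z^2$. Write $H(X, \nu Y) = g\, H^*(X, Y)$, where $g := \gcd_{0 \leq i \leq d}(a_i \nu^{d-i})$ is the content of $H(X, \nu Y)$ and $H^*$ is primitive. Applying Lemma~\ref{Bhar} to $H^*(X, 1)$ gives $\gcd_{(x, y)} H^*(x, y) \mid d!$, and combining this with $D^d \mid g \cdot \gcd_{(x,y)} H^*(x, y)$ yields $D^d \mid g \cdot d!$. Passing to $p$-adic valuations and choosing $i_0$ with $v_p(a_{i_0}) = 0$ (available by primitivity of $H$), we have $v_p(g) \leq (d - i_0) v_p(\nu) \leq d\, v_p(\nu)$, hence
\[
d\bigl(v_p(D) - v_p(\nu)\bigr) \;\leq\; v_p(d!) \;<\; \frac{d}{p-1} \;\leq\; d.
\]
If $v_p(D) > v_p(\nu)$ the left side is at least $d$, a contradiction; so $v_p(D) \leq v_p(\nu)$ for every prime $p$, giving $D \mid \nu$.

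The conceptual heart of the argument is the passage from the set-level identity between $H(\Z^2)$ and $H(\Z \times \nu\Z)$ to information about individual coefficients of $H$, which is achieved by the coprimality $\gcd(D, N) = 1$. The quantitative engine is the Legendre bound $v_p(d!) < d/(p - 1) \leq d$, which is just sharp enough to exclude $p^d \mid d!$ and thereby force both $N = 1$ and $D \mid \nu$ once Pólya's lemma (Lemma~\ref{Bhar}) has reduced the problem to a divisibility between integers.
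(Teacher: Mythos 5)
Your proof is correct and follows essentially the same route as the paper: reduce to primitive $H$, use $\gcd(D,N)=1$ to turn the set equality into pointwise divisibility, and conclude with P\'olya's lemma (Lemma~\ref{Bhar}) together with the bound $v_p(d!)<d$. The only cosmetic difference is in the step $D \mid \nu$, where the paper specializes to the points $(\nu,t)$ to get $D^d \mid \nu^d H(1,t)$ directly, whereas you bound the content of $H(X,\nu Y)$; both reduce to the same $p$-adic valuation estimate.
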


\begin{proof} 
By dividing $H$ by the gcd of its coefficients, we may assume that $H$ is primitive. Since $D$ and $N$ are coprime, we deduce from the hypothesis that 
$$
H(m, n) \equiv 0 \bmod N^d
$$
for all integers $m$ and $n$. Assume for the sake of contradiction that $N >1$ and let $p$ be a prime dividing $N$. Consider the polynomial $P(T) := H(T, 1)$, which is a primitive polynomial in $\Z[T]$ with degree $d'$ satisfying $0 \leq d' \leq d$. By definition of $p$ we have 
$$
P(t) = H(t, 1) \equiv 0 \bmod p^d
$$ 
for every integer $t$. Applying Lemma \ref{Bhar}, we deduce that $p^d \mid d'! \mid d!$. Such a divisibility property never happens since, by denoting the $p$--adic valuation by $v_p$, we have the inequality $v_p(d!) < d$, for every prime $p$ and every integer $d \geq 1$. The hypothesis $N > 1$ leads to a contradiction. Therefore we have $N = 1$. 
 
To prove that $D \mid \nu$, we start from \eqref{713} with $N = 1$. For an integer $t$, consider the values
$$
\left[H \circ \begin{pmatrix} 1 & 0 \\ 0 & \nu \end{pmatrix}\right](\nu, t) = H(\nu, \nu t) = \nu^d H(1, t).
$$
We deduce from \eqref{713}, that for every integer $t$, one has
$$
D^d \mid \nu^d H(1, t),
$$
that we write as 
$$
\left(\frac{D}{\gcd(\nu, D)}\right)^d \mid H(1, t).
$$
Again appealing to Lemma \ref{Bhar}, we deduce that $D/\gcd(\nu, D) = 1$. Hence the result.
\end{proof}
 
\section{\texorpdfstring{From $(F_1, F_2)$ to $(G_1, G_2)$}{A reduction step}}
\label{FromF1F2toG1G2}
\subsection{\texorpdfstring{The endomorphism $\rho$}{The endomorphism rho}}
\label{constructionofrho} 
Assume the hypotheses of Theorem \ref{source}: we start from two forms $F_1$ and $F_2$ satisfying \eqref{hypoforF1F2}. Furthermore, we suppose that $F_1$ satisfies $(C1)$. By Lemma \ref{existenceofrho}, there exists $\rho \in {\rm GL}(2, \Q)$ satisfying \eqref{F1rho= F2}. By \eqref{conjugationformula} the form $F_2$ also satisfies $(C1)$. This $\rho$ is not necessarily unique. To choose a particular $\rho$, we consider the finite set 
$$
\Gamma := {\rm Isom}(F_1\rightarrow F_2, \Q) \cup {\rm Isom}(F_2 \rightarrow F_1, \Q), 
$$
we recall the Definition \ref{associatedlattice} and we pick some $\rho \in \Gamma$ such that 
\begin{equation}
\label{minimalindex}
[\Z^2 : L (\rho^{-1})] \text{ is minimal among } \{ [\Z^2 : L (\tau)] : \tau \in \Gamma\}.
\end{equation} 
Swapping the roles of $F_1$ and $F_2$ if necessary, we may still assume that 
$$
F_1\circ \rho = F_2,
$$
where $\rho \in {\rm GL}(2, \Q)$ now satisfies \eqref{minimalindex}.
 Using Lemma \ref{canonicalmatrix}, we write $\rho$ in its canonical form
$$
\rho = \frac ND \cdot A,
$$
where $A = \begin{pmatrix} a_1 & a_2 \\ a_3 & a_4 \end{pmatrix}$. From homogeneity it follows that
\begin{equation}\label{NdF1A=}
N^dF_1 \circ A = D^d F_2.
\end{equation}
The coefficients of the matrix $A$ are coprime integers and the determinant of $A$ is different from zero. Writing $A$ in Smith normal form gives the decomposition
$$
A= P \begin{pmatrix} 1 & 0 \\ 0 & \nu\end{pmatrix} Q,
$$
where $P$, $Q \in {\rm GL}(2, \Z)$ and $\vert \nu \vert = \vert \det A \vert = \vert a_1 a_4-a_2 a_3 \vert$. Furthermore, we can suppose that 
$$
\nu \geq 1,
$$
by eventually multiplying, on the left, the matrix $Q$ by the matrix $\begin{pmatrix} 1 &0 \\ 0& -1\end{pmatrix}$. By assumption we have
$$
F_1(\Z^2) = F_2(\Z^2).
$$
Combining with \eqref{NdF1A=} we obtain the equalities 
\begin{align*}
D^d F_1(\Z^2) = D^d F_2(\Z^2) & =\left[N^d F_1 \circ A \right](\Z^2) = \left[ N^d F_1 \circ P \circ \begin{pmatrix} 1 & 0 \\ 0 & \nu \end{pmatrix} \circ Q \right] (\Z^2) \\
& = \left[ N^d F_1 \circ P \circ \begin{pmatrix} 1 & 0\\ 0 & \nu\end{pmatrix}\right](\Z^2),
\end{align*}
since $Q$ permutes $\Z^2$. Since $P^{-1} $ also permutes $\Z^2$, we deduce the equality of images
$$
D^d G (\Z^2) = \left[ N^d G \circ \begin{pmatrix} 1 & 0 \\ 0 & \nu\end{pmatrix} \right] (\Z^2),
$$
with $G = F_1 \circ P$. We apply Lemma \ref{Lemma5.6} to obtain that 
$$
N= 1 \text{ and } D \mid \nu.
$$ 
This simplifies \eqref{NdF1A=} into
$$
D^d F_2 = F_1 \circ P \circ \begin{pmatrix} 1 & 0 \\ 0 & \nu\end{pmatrix} \circ Q.
$$
Let
$$
G_1 := F_1 \circ P \text{ and } G_2:= F_2 \circ Q^{-1},
$$
so we have 
\begin{equation}
\label{music}
F_i \sim_{{\rm GL}(2, \Z)} G_i.
\end{equation}
Furthermore, these binary forms satisfy the following
\begin{equation}
\label{G1G2Dnu}
\begin{cases}
G_2(DX, DY) = G_1(X, \nu Y) \\
G_1 \sim_{\rm val} G_2 \\
G_1 \not\sim_{{\rm GL}(2, \Z)} G_2,
\end{cases}
\end{equation}
where $D$ and $\nu$ are such that 
\begin{equation}
\label{conditionforDandnu}
D, \nu \geq 1, D \nu > 1 \text{ and } D \mid \nu.
\end{equation}
The forms $G_1$ and $G_2$ are those which appear in Theorem \ref{source}. We now have to prove that $3 \mid \vert {\rm Aut}(F_1, \Q) \vert$, which equals $\vert {\rm Aut}(F_2, \Q)\vert = \vert {\rm Aut}(G_1, \Q)\vert = \vert {\rm Aut}(G_2, \Q)\vert$, and that $(D, \nu) \in \{(1, 2), (2, 2)\}$. 

\subsection{\texorpdfstring{Further remarks on $D$ and $\nu$}{Further remarks on D and nu}} 
Of particular importance is the matrix
\begin{equation}
\label{DD/nu}
\gamma:= \begin{pmatrix} D & 0 \\ 0 & D/\nu\end{pmatrix}.
\end{equation}
With the notations of \S \ref{constructionofrho}, we have the relations
\begin{equation*}
\gamma = Q\, \rho^{-1} P,
\end{equation*} 
and 
\begin{equation}
\label{G1= G2ogamma}
G_1 = G_2 \circ \gamma.
\end{equation}
Thus the element $\gamma$ belongs to ${\rm Isom} (G_2\rightarrow G_1 , \Q)$. By Lemmas \ref{959} \& \ref{conservationofindex} and by the definition \eqref{minimalindex} of $\rho$, we infer that 
\begin{equation}
\label{minimalindexforgamma}
L(\gamma) \text{ has minimal index among } L(\tau) \text{ for } \tau \in {\rm Isom}(G_1 \rightarrow G_2, \Q) \cup {\rm Isom}(G_2 \rightarrow G_1, \Q).
\end{equation}
In particular, we have the inequality
\begin{equation}
\label{index<index}
\left[ \Z^2 : L (\gamma) \right] \leq \left[ \Z^2 : L(\gamma^{-1})\right].
\end{equation}
The lattices $L(\gamma)$ and $L(\gamma^{-1})$ are respectively the sets of $(x_1, x_2)\in \Z^2$ such that
$$
\begin{cases} Dx_1 \in \Z\\
D\nu^{-1} x_2 \in \Z
\end{cases}
\text{ and }
\begin{cases}
D^{-1} x_1 \in \Z\\
\nu D^{-1} x_2 \in \Z
\end{cases}.
$$
The corresponding indices are
$$
 \left[ \Z^2 : L (\gamma) \right] = \frac \nu {{\rm gcd} (D, \nu)}
\text{ and }
\left[ \Z^2 : L (\gamma^{-1}) \right] = D\cdot \frac D {{\rm gcd} (D, \nu)}.
$$
By \eqref{index<index} we obtain the inequality $1\leq \nu \leq D^2.$ By the condition \eqref{conditionforDandnu}, we are reduced to search for $(D, \nu)$ satisfying \eqref{DD/nu}, \eqref{G1= G2ogamma} and such that 
\begin{equation}
\label{conditionsonDnu}
D, \, \nu\geq 1,\, D\nu >1, \, D\mid \nu \text{ and } 1\leq \nu \leq D^2.
\end{equation}
However, in the sequel of the proof, we will sometimes rather appeal to the property \eqref{minimalindexforgamma}, which is stronger than 
the inequality $1\leq \nu \leq D^2$ (see \S \ref{Section11.1} \& \S \ref{Section11.2}). 
 
\section{Interlude: equations with forms and points on varieties}
\subsection{Equations with binary forms}
We recall Lemma \ref{EverBomSch}. From this lemma, we deduce

\begin{lemma}
\label{879} 
Let $d \geq 3$ and $F \in {\rm Bin}(d, \Z)$. Let $n\geq 1$, $n_1$ and $n_2$ be fixed integers. Then, for every $\varepsilon > 0$ and for $Z$ tending to infinity, one has the asymptotic lower bound
$$
\vert \{m \in \Z: 1 \leq \vert m \vert \leq Z, m = F(x, y) \textup{ for some } x \equiv n_1 \bmod n, y \equiv n_2 \bmod n\} \vert \gg Z^{2/d - \varepsilon}.
$$
\end{lemma}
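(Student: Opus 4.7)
The plan is to adapt the proof of Lemma \ref{resultStewartXiaobis} by imposing the congruence conditions from the outset. Writing $F(X,Y) = \sum_{i=0}^{d} a_i X^i Y^{d-i}$ and setting $\eta := (\sum_i |a_i|)^{-1}$, any pair $(x,y) \in \Z^2$ with $\max(|x|, |y|) \leq (\eta Z)^{1/d}$ automatically satisfies $|F(x,y)| \leq Z$. First, I would count the integer pairs in this box which in addition satisfy $x \equiv n_1 \bmod n$ and $y \equiv n_2 \bmod n$; since $n$, $n_1$, $n_2$ are fixed, a standard lattice-point count gives, for $Z$ large enough, a lower bound of order $Z^{2/d}$ on the number of such pairs.

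Next, I would discard the pairs with $F(x, y) = 0$. Since $F$ has nonzero discriminant, its integral zero-set is contained in the union of the at most $d$ lines through the origin coming from the rational linear factors of $F$, and intersected with the above box this produces only $O(Z^{1/d})$ points. Removing them still leaves $\gg Z^{2/d}$ pairs $(x, y)$ satisfying simultaneously $1 \leq |F(x, y)| \leq Z$ and the two congruence conditions.

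Finally, to pass from pairs to distinct values I would invoke Lemma \ref{EverBomSch}: each integer $m$ with $1 \leq |m| \leq Z$ admits at most $c(F, \varepsilon) |m|^{\varepsilon} \leq c(F, \varepsilon) Z^{\varepsilon}$ preimages in $\Z^2$ (a fortiori in our constrained subset). Dividing the lower bound on the number of pairs by this uniform multiplicity bound gives at least $\gg Z^{2/d - \varepsilon}$ distinct values in the target set, as required. No step is a genuine obstacle; the only point to check is that restricting to a fixed residue class modulo $n$ only shrinks the pair count by a harmless factor of $n^{-2}$, which does not affect the exponent $2/d - \varepsilon$.
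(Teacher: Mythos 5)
Your proposal is correct and follows essentially the same route as the paper: count the $\gg Z^{2/d}$ pairs in the box satisfying the congruences, discard the $O(Z^{1/d})$ zeros of $F$, and divide by the $O_\varepsilon(Z^\varepsilon)$ multiplicity bound from Lemma \ref{EverBomSch}. No issues.
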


\begin{proof} 
We observe that there exists a constant $B_F$, depending on $F$ only, such that one has the implication
$$
\vert x \vert,\, \vert y \vert \leq B_F Z^{1/d} \Rightarrow \vert F(x, y) \vert \leq Z.
$$
The number of such pairs of integers $(x, y)$ satisfying, in addition, the imposed congruence conditions modulo $n$ is $\gg Z^{2/d}$. We then appeal to Lemma \ref{EverBomSch} to bound the number of integer solutions $(x, y)$ to the equation $F(x, y)= m$, where $m$ is fixed and satisfies $1\leq \vert m \vert \leq Z.$ Furthermore, the number of integral solutions to $F(x, y)=0$ in the box $\vert x\vert, \, \vert y\vert \leq B_F Z^{1/d}$ is negligible, since it is $\ll Z^{1/d}$. Combining these facts we complete the proof of Lemma \ref{879}.
\end{proof}

\subsection{Unique representation}
\begin{definition} 
Let $F \in {\rm Bin}(d, \Z)$ with $d \geq 3$. We say that an integer $h \ne 0$ is essentially represented by $F$ when the following two conditions are simultaneously satisfied 
\begin{itemize}
\item there exists $(x, y) \in \Z^2$ such that $F(x, y) =h$,
\item for all $(x, y, s, t)\in \Z^4$ such that $F(x, y) = F(s, t) =h$, there exists $\sigma \in {\rm Aut}(F, \Q)$ such that $(x, y) = \sigma (s, t)$.
\end{itemize}
\end{definition}

We now claim that almost integers represented by a form $F$ are essentially represented. Recall the notation \eqref{defNF} and introduce 
its variant
$$
N_{\rm uniq}(F, X) := \vert \{ h : \vert h \vert \leq X,\, h \text{ is essentially represented by } F \} \vert,
$$
to state

\begin{lemma} 
\label{notunique}
Let $d \geq 3$ and $F \in {\rm Bin}(d, \Q)$. There exists $\alpha < 2/d$ such that, for every $X \geq 1$ one has
$$
N(F,X)-N_{\rm uniq}(F,X) = O (X^\alpha).
$$
\end{lemma}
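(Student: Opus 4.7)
The approach is to translate the question into counting integer points on the affine variety
$$
\mathbb W := \{(X_1, Y_1, X_2, Y_2) \in \mathbb A^4 : F(X_1, Y_1) = F(X_2, Y_2)\}
$$
lying outside the trivial ``graph'' components $V_\sigma := \{(v, \sigma v) : v \in \mathbb A^2\}$ for $\sigma \in {\rm Aut}(F, \Q)$, and then to bound this count by appealing to Lemma \ref{intersectionofimages}. Clearing denominators, we may assume $F \in {\rm Bin}(d, \Z)$.

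For a non-essentially-represented $h$, fix two representations $v_1, v_2 \in \Z^2$ with $F(v_1) = F(v_2) = h$ and $v_2 \notin {\rm Aut}(F, \Q) \cdot v_1$. Then $(v_1, v_2)$ is an integer point on $\mathbb W$ avoiding every $V_\sigma$ for $\sigma \in {\rm Aut}(F, \Q)$. Integer points on $V_\sigma$ for $\sigma \in {\rm Aut}(F, \overline{\Q}) \setminus {\rm Aut}(F, \Q)$ require $\sigma v_1 \in \Z^2$ for a matrix $\sigma$ with at least one non-rational entry; such a condition forces $v_1$ into a sublattice of $\Z^2$ of rank at most one, and the resulting values $h = F(v_1)$ are $O(X^{1/d})$ in number.

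The bulk of the proof controls the remaining integer points on the non-trivial irreducible components of $\mathbb W$. Over $\overline\Q$, each such component $C$ has dimension two and is not of the form $V_\sigma$, so the projections $\pi_1, \pi_2 : C \to \mathbb A^2$ onto the first and second pairs of coordinates are both generically finite (otherwise $C$ would coincide with a graph). Combining this with Lemma \ref{EverBomSch} to bound over-counting, one extracts from $C$ a pair of binary forms $G_1, G_2 \in {\rm Bin}(d, \Q)$ that are, by genericity, not ${\rm GL}(2, \Q)$-equivalent, and whose image intersection captures the $h$-values arising from $C$; Lemma \ref{intersectionofimages} then bounds this intersection by $O(X^{\kappa_d})$ with $\kappa_d < 2/d$. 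A homogeneity step, partitioning representations according to $\gcd(x, y)$ and using $F(gv) = g^d F(v)$, reduces the general case to the primitive one and sums a convergent geometric series in $g$ provided one chooses $\alpha > 1/d$, which is harmless since $d \geq 3$.

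The principal obstacle lies in the geometric analysis verifying that every non-trivial component of $\mathbb W$ yields a pair of non-${\rm GL}(2, \Q)$-equivalent forms; this depends on the classification of ${\rm Aut}(F, \overline{\Q})$ recorded in Lemma \ref{possibleAut} and on the finiteness of the automorphism group of a binary form of degree $\geq 3$. Once this is in hand, setting $\alpha := \max(1/d, \kappa_d) < 2/d$ completes the proof.
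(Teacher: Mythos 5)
Your reduction steps at the margins are fine (the graphs of non-rational automorphisms forcing $(x,y)$ into a rank-one sublattice, hence $O(X^{1/d})$ values, and the $\gcd$/homogeneity reduction to primitive points), but the core of your argument rests on a false geometric picture. The variety $\mathbb W$ defined by $F(X_1,Y_1)=F(X_2,Y_2)$ is a degree-$d$ hypersurface in $\mathbb A^4$, hence of dimension three (projectively, a smooth surface in $\mathbb P^3$, which is moreover irreducible since a smooth hypersurface in $\mathbb P^3$ is connected). The graphs $V_\sigma$, $\sigma \in {\rm Aut}(F,\overline{\Q})$, are two-dimensional subvarieties (projectively, lines on the surface), not irreducible components; their complement in $\mathbb W$ is dense and three-dimensional. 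Consequently there are no ``non-trivial irreducible components $C$ of dimension two'' with generically finite projections $\pi_1,\pi_2$, and the step where you ``extract a pair of binary forms $G_1, G_2$ that are not ${\rm GL}(2,\Q)$-equivalent'' has no meaning here: the only form in play is $F$ itself, and Lemma \ref{intersectionofimages} is by hypothesis inapplicable to the pair $(F,F)$. The values $h$ arising from integer points of $\mathbb W$ off the graphs are not contained in the common value set of two inequivalent forms, so nothing in your argument actually bounds them.

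What is genuinely needed at this point is a uniform count of integer (or rational) points of bounded height on the surface $\mathbb W$ that lie \emph{off its lines} -- a Heath-Brown/Salberger type bound as in Lemma \ref{Salberger} -- combined with a device (such as \cite[Prop.~2.6]{FW}) to control representations $F(x,y)=m$ with $\max(|x|,|y|)$ much larger than $|m|^{1/d}$. This is precisely the mechanism of Propositions \ref{984} and \ref{984*} in the paper for a pair of forms, and the paper's own proof of Lemma \ref{notunique} is a one-line reduction to \cite[Lemma 2.4]{SX}, where Stewart and Xiao carry out exactly this analysis for the pair $(F,F)$. As written, your proposal replaces this essential input by an appeal to Lemma \ref{intersectionofimages} that cannot apply, so the proof has a genuine gap.
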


\begin{proof} 
We have
\begin{multline*}
N(F, X) - N_{\rm uniq}(F, X) \leq \\ 1 + \vert \{ (x, y) \in \Z^2 : 0 < \vert F( x, y) \vert \leq X \text{ and } F(x, y) \text { is not essentially represented}\} \vert.
\end{multline*}
Now use \cite[Lemma 2.4]{SX} to complete the proof. 
\end{proof}

\subsection{Lines on the surface associated to two binary forms}
\begin{definition}
Let $d \geq 3$ and let $F$ and $G$ two forms of ${\rm Bin}(d, \C)$. The surface associated to the pair $(F, G)$ is the surface $\mathbb W(F \cap G)\subset \mathbb P^3 (\C)$ defined by the equation
$$
\mathbb W (F \cap G) : F(X, Y) - G(Z, T) = 0,
$$
where $(X:Y:Z:T)$ are the usual coordinates of a point of $\mathbb P^3 (\C)$.
\end{definition}

It is easy to check that $\mathbb W (F\cap G)$ is a non-singular surface. In order to describe the lines on $\mathbb W (F \cap G)$ we consider the sets of zeroes of $F$ and $G$. By definition, the {\it set of zeroes} of a binary form $F$ is 
$$
\mathcal{Z}(F) := \{(x_1: x_2) \in \mathbb P^1 (\C): F(x_1, x_2) = 0\}.
$$
From the proof of \cite[Prop. 4.1]{BoSa}, we extract the following description of the lines on the surface $\mathbb W( F\cap G)$ (for the particular case where $F= G$, see \cite[Lemma 5.1, p.27]{CaHaMa}). To shorten notations, we write
$$ 
\mathcal{R}_\C := {\rm Isom}(G \rightarrow F, \C),
$$
and 
$$ 
\mathcal{R}_\Q := {\rm Isom}(G \rightarrow F, \Q).
$$
We state the following.

\begin{prop} 
\label{Bois-Sarti} 
Let $F$ and $G$ be two binary forms of ${\rm Bin}(d, \C)$. A line $\mathbb{D}$ of $\mathbb P^3 (\C)$ belongs to the surface $\mathbb{W}(F \cap G)$ if and only if it satisfies one of the following properties
\begin{enumerate}
\item \label{974} There exists $(x_1 : x_2) \in \mathcal{Z}(F)$ and $(x_3 : x_4) \in \mathcal{Z}(G)$ such that the line $\mathbb D$ goes through the point $(x_1 : x_2 : 0 : 0) $ and $(0 : 0 : x_3 : x_4)$.
\item \label{976} There exists $\rho \in \mathcal{R}_\C$ such that $\mathbb{D}$ has the following parametric equation
$$ 
\mathbb D_\rho : (u, v) \in \C^2 \mapsto (u, v, \rho(u, v)).
$$
\end{enumerate} 
\end{prop}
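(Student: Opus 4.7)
The plan is to verify both directions, leaving the \emph{if} direction as a routine substitution and devoting the bulk of the work to a rank analysis for the \emph{only if} direction. For the \emph{if} part, a line of type \eqref{974} is parametrized by $[\lambda:\mu]\mapsto(\lambda x_1:\lambda x_2:\mu x_3:\mu x_4)$, and the homogeneity of $F$ and $G$ combined with $F(x_1,x_2)=0=G(x_3,x_4)$ makes the defining equation vanish identically. A line of type \eqref{976} is parametrized by $(u,v)\mapsto(u:v:\rho(u,v))$, and $\rho\in\mathcal R_\C$ means $G\circ\rho=F$, so $F(u,v)-G(\rho(u,v))=0$ holds identically.

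For the \emph{only if} part, let $\mathbb D\subset\mathbb W(F\cap G)$ and parametrize it as the image of $[\lambda:\mu]\mapsto[\lambda P+\mu Q]$ for linearly independent $P,Q\in\C^4$. Split the coordinates into the $(X,Y)$-block and the $(Z,T)$-block to obtain two linear maps $A,B\colon\C^2\to\C^2$, so that the combined map $(A,B)\colon\C^2\to\C^4$ is injective. The inclusion $\mathbb D\subset\mathbb W(F\cap G)$ translates to the polynomial identity
$$
F(A(\lambda,\mu))\;=\;G(B(\lambda,\mu))\quad\text{in}\quad\C[\lambda,\mu].
$$
The proof then proceeds by case analysis on $(\rk A,\rk B)$. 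If $\rk A=\rk B=2$, set $\rho:=B\circ A^{-1}\in{\rm GL}(2,\C)$; after the change of parameters $(u,v):=A(\lambda,\mu)$ the identity becomes $F(u,v)=G(\rho(u,v))$, whence $\rho\in\mathcal R_\C$ and the line has the parametric form of case \eqref{976}.

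The cases where one of the ranks is strictly less than $2$ require the nonzero discriminant hypothesis. If, say, $\rk A=2$ and $\rk B\leq 1$, then $G(B(\lambda,\mu))$ is either identically zero or a scalar multiple of a $d$-th power of a single linear form, whereas $F(A(\lambda,\mu))$ has nonzero discriminant as a binary form of degree $d$ in $(\lambda,\mu)$ (because $A$ is invertible and $F$ has nonzero discriminant); this contradicts the identity, as does the symmetric configuration $\rk A\leq 1$, $\rk B=2$. The remaining possibility is that $\rk A\leq 1$ and $\rk B\leq 1$; the injectivity of $(A,B)$ then forces both ranks to equal $1$, and we may write $A(\lambda,\mu)=\ell_A(\lambda,\mu)\cdot(a,b)$ and $B(\lambda,\mu)=\ell_B(\lambda,\mu)\cdot(c,d)$ with $(a,b),(c,d)\neq(0,0)$ and $\ell_A,\ell_B$ linearly independent linear forms (otherwise their common zero would produce a point where both $A$ and $B$ vanish, violating injectivity of $(A,B)$). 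Changing parameters to $u:=\ell_A(\lambda,\mu)$, $v:=\ell_B(\lambda,\mu)$, the identity reads $u^d F(a,b)=v^d G(c,d)$, forcing $F(a,b)=0=G(c,d)$; the line then passes through $(a:b:0:0)$ (at $v=0$) and $(0:0:c:d)$ (at $u=0$), which is exactly case \eqref{974}. The main obstacle is the mixed-rank case, where the nonzero-discriminant hypothesis must be invoked to exclude a spurious family of lines; the remaining step is the observation that in the rank-one-each subcase, the two linear forms $\ell_A$ and $\ell_B$ cannot share a zero, so they yield honest coordinates for a legitimate line.
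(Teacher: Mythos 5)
Your argument is correct and complete, but it is worth noting that the paper does not prove this proposition at all: it is extracted directly from the literature, namely from the proof of Proposition 4.1 of Boissi\`ere--Sarti \cite{BoSa} (with Caporaso--Harris--Mazur cited for the case $F=G$). What you have written is therefore a self-contained substitute for that citation, and it is essentially the same style of argument as in the cited source: parametrize the line by an injective linear map $\C^2\to\C^4$, split into the $(X,Y)$- and $(Z,T)$-blocks $A,B$, and run a case analysis on the ranks. Your treatment of each case is sound: in the rank $(2,2)$ case the change of variables $u=A(\lambda,\mu)$ correctly identifies $\rho=B\circ A^{-1}$ as an element of ${\rm Isom}(G\to F,\C)$; in the mixed-rank case you correctly invoke the nonzero-discriminant hypothesis (this is genuinely needed --- for forms with repeated linear factors the surface is far from smooth and carries extra lines, so the statement would fail); and in the rank $(1,1)$ case the injectivity of $(A,B)$ does force $\ell_A,\ell_B$ to be non-proportional, whence $\ell_A^d F(a,b)=\ell_B^d G(c,d)$ forces both values to vanish and the line joins $(a:b:0:0)$ to $(0:0:c:d)$, which is exactly type \eqref{974}. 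The only thing your write-up buys beyond the paper is independence from the external reference; conversely, the paper's citation also yields the smoothness of $\mathbb W(F\cap G)$ and line counts that your argument does not address (and does not need to).
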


Observe that if the point $(z_1: z_2 : z_3 : z_4)$ belongs to a line of the type \ref{974}, we have $F(z_1, z_2) = G(z_3, z_4) =0$.

The following proposition studies common values of two forms coming from points which are not on these $\mathbb D_\rho$. 

\begin{prop}
\label{984} 
Let $d \geq 3$ and let $F$ and $G$ be two binary forms belonging to ${\rm Bin}(d, \Q)$. Let $\mathbb D_\rho$ ($\rho \in \mathcal{R}_\C$) be a line on the surface $\mathbb W (F \cap G)$, as defined in part \ref{976} of Proposition \ref{Bois-Sarti}. Then there exists a constant $\alpha < 2/d$ such that for all $B \geq 1$, one has 
\begin{multline} 
\label{1000}
\vert \{m \in \Z : \vert m \vert \leq B, \textup{ there exists } (x, y, z, t) \in \Z^4 \textup{ such that } \\
(x, y, z, t ) \notin \cup_{\rho \in \mathcal{R}_\C} \mathbb D_\rho, m= F(x, y) = G(z, t)\} \vert = O(B^\alpha).
\end{multline}
\end{prop}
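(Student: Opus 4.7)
My plan is to split the proof into two cases according to whether $F$ and $G$ are ${\rm GL}(2,\Q)$-equivalent. When they are not, Lemma~\ref{intersectionofimages} immediately bounds the total number of common values $|m|\le B$ by $O(B^{\kappa_d})$ with $\kappa_d<2/d$, which a fortiori bounds \eqref{1000}. So from now on I assume $F\sim_{{\rm GL}(2,\Q)}G$ and fix some $\rho_0\in\mathcal{R}_\Q$, so that $G\circ\rho_0=F$. By Lemma~\ref{959} we then have $\mathcal{R}_\C=\rho_0\circ{\rm Aut}(F,\C)$, so an integer quadruple $(x,y,z,t)$ with $F(x,y)=G(z,t)=m$ lies on some $\mathbb{D}_\rho$ if and only if $\rho_0^{-1}(z,t)=\sigma(x,y)$ for some $\sigma\in{\rm Aut}(F,\C)$.

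The reduction step is to clear denominators. I choose an integer $C\ge 1$ such that the matrix $C\rho_0^{-1}$ has integer entries, and given such a quadruple $(x,y,z,t)$ I set
$$
(x',y'):=(Cx,Cy)\in\Z^2,\qquad (z'',t''):=C\rho_0^{-1}(z,t)\in\Z^2.
$$
Since $G=F\circ\rho_0^{-1}$ and $F$ is homogeneous of degree $d$, both $F(x',y')$ and $F(z'',t'')$ equal $m':=C^{d}m$. Moreover, the linearity of any $\sigma\in{\rm Aut}(F,\C)$ makes the condition $\rho_0^{-1}(z,t)=\sigma(x,y)$ equivalent to $(z'',t'')=\sigma(x',y')$. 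Consequently, a quadruple $(x,y,z,t)$ excluded from every $\mathbb{D}_\rho$ ($\rho \in \mathcal{R}_\C$) corresponds exactly to two representations $(x',y')$ and $(z'',t'')$ of the integer $m'$ by $F$ lying in distinct ${\rm Aut}(F,\C)$-orbits, and in particular in distinct ${\rm Aut}(F,\Q)$-orbits.

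Hence any integer $m$ contributing to the left-hand side of \eqref{1000} produces an integer $m'=C^dm$ that fails to be essentially represented by $F$ (in the sense preceding Lemma~\ref{notunique}) and satisfies $|m'|\le C^{d}B$. Applying Lemma~\ref{notunique} to $F$ with parameter $X=C^{d}B$ yields the required bound $O((C^{d}B)^{\alpha})=O_{F,G}(B^{\alpha})$ for some $\alpha<2/d$. The main technical point is the dictionary between quadruples off the excluded lines on $\mathbb{W}(F\cap G)$ and non-essentially-represented integers for the single form $F$, which rests on Lemma~\ref{959} together with the homogeneity of automorphisms; once this dictionary is in place, the proposition is a direct consequence of the already established Lemma~\ref{notunique}.
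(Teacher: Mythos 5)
Your proof is correct, and it takes a genuinely different route from the one in the paper. The paper argues directly on the surface $\mathbb W(F\cap G)$: it separates the common representations according to whether all four coordinates are smaller than a threshold $B_1=B^{1/d}\Delta$, bounds the small solutions by Salberger's theorem (Lemma \ref{Salberger}) applied to points of bounded height lying off the lines of the surface, bounds the large solutions by the Liouville-type estimate \cite[Prop. 2.6]{FW}, and then optimizes $\Delta$. You instead dispatch the case where $F$ and $G$ are not ${\rm GL}(2, \Q)$--equivalent by Lemma \ref{intersectionofimages} (exactly as the remark following the proposition; strictly speaking one first scales $F$ and $G$ to integral forms, which is harmless), and in the equivalent case you transport the representation by $G$ through a fixed $\rho_0\in\mathcal R_\Q$ and clear denominators, so that a quadruple lying off every $\mathbb D_\rho$, $\rho\in\mathcal R_\C$, yields two integral representations of $C^d m$ by $F$ in distinct ${\rm Aut}(F, \Q)$--orbits; Lemma \ref{notunique}, i.e. \cite[Lemma 2.4]{SX}, then gives the bound. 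The dictionary rests correctly on Lemma \ref{959}, and the single value $m=0$ causes no trouble since $0$ is represented but, by definition, never essentially represented, so it is already absorbed in $N(F,X)-N_{\rm uniq}(F,X)$. Your route is shorter and requires no new point counting --- the geometry is entirely outsourced to Lemma \ref{intersectionofimages} and Lemma \ref{notunique}, both of which the paper uses elsewhere as black boxes --- and, run with $\mathcal R_\Q$ in place of $\mathcal R_\C$, the same dictionary proves the stronger Proposition \ref{984*} directly, bypassing the paper's separate discussion of the lines with irrational coefficients. What the paper's argument buys in exchange is uniformity (no case distinction on ${\rm GL}(2, \Q)$--equivalence) and a proof that is self-contained at the level of the surface $\mathbb W(F\cap G)$, with a correspondingly lighter reliance on \cite{SX}.
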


Of course, this proposition is trivial when $F$ and $G$ are not ${\rm GL}(2, \Q)$--equivalent by Lemma~\ref{intersectionofimages}.

\subsubsection{Proof of Proposition \ref{984}} 
An important ingredient of the proof is the following result of Salberger that we state without specifying the value of $\xi_d$.

\begin{lemma}[{{\cite[Theorem 0.1]{Sal2}}}]
\label{Salberger}
Let $\mathbb X \subset \mathbb P^n$ be a geometrically integral projective surface over $\Q$ with degree $d \geq 3$. For $B \geq 1$, let
${\rm M}(\mathbb X, B)$ be the cardinality of the set of points of $\mathbb X$, with height at most $B$, which do not belong to the union of lines of $\mathbb X$. Then for some $\xi_d < 2$, only depending on $d$, one has the bound
$$
{\rm M}(\mathbb X, B) = O(B^{\xi_d}).
$$
\end{lemma}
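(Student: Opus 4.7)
The plan is to implement Salberger's global determinant method (a refinement of Heath-Brown's $p$-adic determinant method). Since points of $\mathbb{P}^n(\Q)$ correspond to primitive integer tuples up to sign, I would identify the set to be counted with primitive $P=(x_0,\ldots,x_n)\in\Z^{n+1}$ lying on $\mathbb{X}$ with $\max_i |x_i|\leq B$ and not on any line of $\mathbb{X}$. The strategy is to cover this set by auxiliary hypersurface sections of $\mathbb{X}$ of controlled degree, then count points on the resulting curves, and finally optimize parameters.

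The first main step is the local determinant estimate. Fix an integer $E\geq 1$ (to be optimized later), let $R=\binom{E+n}{n}$, and pick a basis $\phi_1,\ldots,\phi_R$ of monomials of degree $\leq E$ in $x_0,\ldots,x_n$. For each prime $p$ of good reduction and each smooth point $\bar Q\in\mathbb{X}_p(\mathbb{F}_p)$, I collect the primitive integer points of $\mathbb{X}$ reducing to $\bar Q$. Since $\mathbb{X}$ has local dimension $2$ at $\bar Q$, a $p$-adic Taylor expansion in two local parameters shows that, for any $R$ such points $P_1,\ldots,P_R$, the determinant of the matrix $(\phi_i(P_j))_{i,j}$ is divisible by $p^{e(E,n)}$ with $e(E,n)\sim E^3/6$ via the usual Newton-polygon bound in dimension two. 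Combined with the trivial archimedean estimate $|\det|\ll B^{RE}$, this forces either vanishing of the determinant (so the $R$ points lie on an auxiliary hypersurface $V(G)$ of degree $\leq E$ with $G$ not vanishing on $\mathbb{X}$) or a direct bound on how many points can sit in the residue class.

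Salberger's global innovation is to aggregate these $p$-adic divisibilities across all primes $p$ up to a judiciously chosen threshold $T=T(B,d)$ into a single polynomial identity, rather than paying a separate cost prime by prime. This produces a covering of the off-line points by a controlled family of curves $C=\mathbb{X}\cap V(G)$ of degree $\leq dE$ on $\mathbb{X}$. On each such curve, the Heath-Brown--Broberg bound supplies $O_{\delta,\varepsilon}(B^{2/\delta+\varepsilon})$ primitive integer points of height $\leq B$ outside linear components of $C$; crucially, any line on $C$ is a fortiori a line on $\mathbb{X}$ (because $C\subset\mathbb{X}$), so such lines are already excluded by the hypothesis of the statement. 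Summing over residue classes, over primes in the range $p\leq T$, and over the resulting family of curves yields a total of the shape $O(B^{\alpha(E)+\varepsilon})$.

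The final step is the optimization: balancing the number and degree of auxiliary curves (growing with $E$) against the sharpness of the curve bound (improving as $\delta=dE$ grows) leads to the choice $E\asymp\sqrt{d}$ and an exponent of shape $\alpha\sim 2/\sqrt{d}$, strictly less than $2$ precisely because $d\geq 3$. The main obstacle will be the global bookkeeping: making the $p$-adic divisibilities at many primes collapse into one auxiliary form of controlled degree, instead of losing a factor at each prime separately, is the core technical achievement of Salberger's method and requires a delicate choice of $T$ together with a weighting of primes. A secondary difficulty is uniformity of the Heath-Brown--Broberg curve bound across the bounded but potentially many irreducible components of $C$; this is handled by standard decomposition arguments once the degree bound $\deg C\leq dE$ is in place, together with the observation above that line components of $C$ are lines of $\mathbb{X}$ and hence already removed.
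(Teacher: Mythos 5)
This lemma is not proved in the paper at all: it is imported verbatim as \cite[Theorem 0.1]{Sal2}, with only a historical remark tracing the exponent $\xi_d<2$ back to Heath-Brown \cite{HB} and its improvements in \cite{Sal1} and \cite{Sal2}. So the relevant comparison is not with an argument in the paper but with Salberger's own proof, of which your proposal is a high-level outline of the global determinant method. As an outline it identifies the right ingredients, but as a proof it has genuine gaps, two of which I would flag concretely.

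First, you explicitly defer the decisive step: you write that aggregating the $p$-adic divisibilities over all primes $p\leq T$ into a single auxiliary form of controlled degree ``is the core technical achievement of Salberger's method and requires a delicate choice of $T$ together with a weighting of primes.'' That step is precisely what separates Salberger's result from the earlier prime-by-prime versions, and a proof that names its own central missing ingredient without supplying it is not a proof; everything downstream (the number of auxiliary curves, hence the final exponent and the conclusion $\xi_d<2$ for all $d\geq 3$ rather than only for large $d$) depends on it. Second, there is a concrete technical error in the local determinant step: you take for $\phi_1,\dots,\phi_R$ \emph{all} monomials of degree $\leq E$, with $R=\binom{E+n}{n}$. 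Once $E$ reaches the degree of a defining equation of $\mathbb X$, these monomials are linearly dependent as functions on $\mathbb X$, so the determinant $\det(\phi_i(P_j))$ vanishes identically and the resulting auxiliary form $G$ vanishes on $\mathbb X$ --- exactly the degenerate case you need to exclude. One must instead work with a basis of the degree-$E$ graded piece of the homogeneous coordinate ring of $\mathbb X$, whose dimension is governed by the Hilbert polynomial ($R\sim dE^2/2$ for a surface of degree $d$), and the divisibility exponent $e$ must then be recomputed for that $R$ (it is of order $R^{3/2}$, i.e.\ $d^{3/2}E^3$ up to constants, not $E^3/6$ independently of $d$); the dependence of $e/R$ on $d$ is what ultimately produces an exponent decaying in $d$. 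These are repairable along standard lines, but as written the argument does not close.
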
 

The first result of that type, with an exponent $\xi_d <2$, is due to Heath-Brown \cite{HB}. The value of $\xi_d$ was improved and the hypotheses were enlarged in \cite{Sal1} and \cite{Sal2}. The beginning of the proof of Proposition \ref{984} can be compared with the proof of \cite[Theorem 1.1, \S 2]{FW}. Let $\Delta >1$ be a parameter to be fixed later and let

\begin{equation}
\label{defB1}
B_1 = B^{1/d} \Delta.
\end{equation}

Let $\mathcal{P}(B)$ be the set of integers $m$ that appear at the left--hand side of \eqref{1000}. We decompose $\mathcal{P}(B)$ into two disjoint subsets
\begin{equation}
\label{1010} 
\mathcal{P}(B) = \mathcal P^<(B) \cup \mathcal P^{\geq}(B),
\end{equation}
where 
\begin{itemize}
\item $\mathcal P^<(B)$ contains all the $m\in \mathcal P(B)$ such that all the solutions to $m = F(x, y) = G(z, t)$ with $(x, y, z, t) \not\in \cup_{\rho \in \mathcal{R}_\C} \mathbb D_\rho$ satisfy $\max(\vert x\vert, \vert y \vert, \vert z \vert, \vert t \vert) < B_1$,
\item $\mathcal P^{\geq}(B)$ contains all the $m\in \mathcal P(B)$ such that the equations $m = F(x, y) = G(z, t)$ with $(x, y, z, t) \not\in \cup_{\rho \in \mathcal{R}_\C} \mathbb D_\rho$ admit at least one solution with $\max(\vert x\vert, \vert y \vert, \vert z \vert, \vert t\vert) \geq B_1$. 
\end{itemize}
After checking that the surface $\mathbb W (F \cap G)$ satisfies the hypotheses of Lemma \ref{Salberger} (see \cite [p.262]{FW0}), we deduce the bound
\begin{equation}
\label{1020}
\vert \mathcal P^<(B)\vert = O(B_1 ^{\xi_d}).
\end{equation}
To deal with the cardinality of $\mathcal P^{\geq }(B)$, we write the inequality
\begin{multline}
\label{1024}
\vert \mathcal P^{\geq }(B) \vert \leq \vert \{(x, y) \in \Z^2: 0 < \vert F(x, y) \vert \leq B, \max(\vert x\vert, \vert y \vert) \geq B_1\} \vert \\
+ \vert \{(z, t) \in \Z^2: 0 < \vert G(z, t) \vert \leq B , \max(\vert z \vert, \vert t \vert) \geq B_1\} \vert.
\end{multline}
The following lemma, which is based on a refinement of Liouville's inequality, asserts that almost all solutions $(x, y) \in \Z^2$ to the inequality $0 < \vert F(x, y)\vert \leq A$ (where $F$ belongs to ${\rm Bin}(d, \Z)$) are essentially located in a square slightly larger than the square $\{ (x, y) : \vert x \vert, \ \vert y \vert \leq A^{1/d}\}$. To be more precise, we have

\begin{lemma}[{{\cite[Prop. 2.6]{FW}}}]
Let $d \geq 3$ and let $F \in {\rm Bin}(d, \Z)$. There exist two computable constants $c_1(F)$ and $c_2(F)$, such that for every $\Delta > c_1(F)$ and for every $A \geq 1$, the following holds true
$$ 
\vert \{(x, y) \in \Z^2 : 0 < \vert F(x, y) \vert \leq A, \vert y \vert \geq A^{1/d} \Delta\} \vert 
\leq c_2(F) \left(A^{2/d} \Delta^{2 - d} +A ^{1/(d - 1)}\right).
$$
\end{lemma}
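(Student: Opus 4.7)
The plan is to factor $F$ over $\mathbb{C}$ and exploit rational approximations to the roots of $F(T,1) = 0$, combined with Liouville-type lower bounds.

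First I would write $F(X,Y) = a_d \prod_{i=1}^{d}(X - \alpha_i Y)$, where the roots $\alpha_i \in \mathbb{C}$ are distinct by nonvanishing of $\Disc(F)$. Given an integer point $(x, y)$ with $0 < |F(x, y)| \leq A$ and $|y| \geq A^{1/d}\Delta$, I would argue that for $\Delta$ larger than some constant $c_1(F)$, the ratio $x/y$ lies within $\delta_F := \tfrac{1}{2}\min_{i \neq j}|\alpha_i - \alpha_j|$ of exactly one root $\alpha_i$; otherwise one obtains $|F(x,y)| \gg_F |y|^d > A\Delta^d$, contradicting $|F(x,y)| \leq A$. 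This uniqueness forces $|x - \alpha_j y| \gg_F |y|$ for all $j \neq i$, so that the constraint $|F(x,y)| \leq A$ reduces to
$$
|x - \alpha_i y| \leq \frac{c_F A}{|y|^{d-1}}
$$
for a constant $c_F$ depending only on $F$.

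Next, for each fixed $y$ with $|y| \geq A^{1/d}\Delta$ and each root $\alpha_i$, the number of integers $x$ satisfying the above inequality is at most $1 + 2c_F A/|y|^{d-1}$. Summing the second term over all such $y$ and over the at most $d$ choices of $i$ gives
$$
\ll_F A \sum_{|y| \geq A^{1/d}\Delta} |y|^{1-d} \ll_F A \cdot (A^{1/d}\Delta)^{2-d} = A^{2/d}\Delta^{2-d},
$$
where the convergence crucially uses $d \geq 3$. This accounts for the first term in the claimed bound.

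The main obstacle is the ``$+1$'' contribution, for which I need an upper bound on the range of $|y|$. Here the refinement of Liouville's inequality enters: for each real algebraic $\alpha_i$ of degree $e_i \leq d$ over $\mathbb{Q}$, Liouville yields $|x - \alpha_i y| \gg_F |y|^{1-e_i}$, while for non-real $\alpha_i$ the elementary bound $|x - \alpha_i y| \gg_F |y|$ holds. Combining with the upper bound $c_F A/|y|^{d-1}$ gives $|y|^{d-e_i} \ll_F A$ when $e_i < d$; in the extremal case $e_i = d$ the interval $|x - \alpha_i y| \leq c_F A/|y|^{d-1}$ has length less than one once $|y| \geq (2c_F A)^{1/(d-1)}$, so it contains at most one integer, and a gap-principle argument between successive admissible $y$ then controls the total count. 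Careful case analysis of the possible $e_i$ yields a ``$+1$'' contribution of size $\ll_F A^{1/(d-1)}$, completing the proof.
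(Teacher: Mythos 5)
The paper itself gives no proof of this lemma: it is quoted verbatim from \cite[Prop.~2.6]{FW}, whose argument the authors describe as resting on a refinement of Liouville's inequality. The first half of your sketch is sound and matches the standard opening of such proofs: for $\Delta$ beyond a computable $c_1(F)$ the ratio $x/y$ is close to a unique root $\alpha_i$, all other factors are $\gg_F |y|$, hence $|x-\alpha_i y|\le c_F A/|y|^{d-1}$, and summing these interval lengths over $|y|\ge A^{1/d}\Delta$ (with non-real roots eliminated for large $\Delta$) gives the term $A^{2/d}\Delta^{2-d}$.

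The genuine gap is in the ``$+1$'' contribution, i.e.\ in bounding the number of $y$ with $|y|\ge A^{1/d}\Delta$ whose interval around $\alpha_i y$ actually contains an integer. For a real root of intermediate degree $2\le e_i\le d-1$, plain Liouville only gives $|y|^{d-e_i}\ll_F A$, i.e.\ $|y|\ll A^{1/(d-e_i)}$, and since $1/(d-e_i)>1/(d-1)$ this range may contain far more than $A^{1/(d-1)}$ values of $y$; nothing in your sketch shows that only $O(A^{1/(d-1)})$ of them admit an admissible $x$ (already for $d=3$ and a quadratic root, e.g.\ $F=(X^2-2Y^2)(X-3Y)$, your inequalities allow $|y|\ll A$, and cutting the count to $A^{1/2}$ requires a continued--fraction or gap argument, not Liouville alone). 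For a root of degree $e_i=d$ the situation is worse: Liouville gives \emph{no} bound on $|y|$ (the bounds $c(F)|y|^{1-d}\le |x-\alpha_i y|\le c_F A|y|^{1-d}$ are compatible for every $y$), and the gap principle you invoke, $y'\ge y^{d-1}/(2c_F A)$ for successive admissible $y<y'$ with distinct fractions, is vacuous throughout the critical range $A^{1/(d-1)}\ll |y|\ll A^{1/(d-2)}$ — it forces growth only once $|y|^{d-2}\gg A$ — so it does not prevent up to roughly $A^{1/(d-2)}\gg A^{1/(d-1)}$ admissible $y$ there; beyond that range it controls the count only in terms of the largest solution, whose height is not effectively bounded by Liouville, so computability of $c_2(F)$ needs an extra input (the refined Liouville inequality used in \cite{FW}, a Thue--Siegel two--approximation argument, or Mahler's original counting). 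You would also have to treat non-primitive pairs sharing the same fraction $x/y$ separately, since then $xy'-x'y=0$ and the gap principle says nothing; and the factorization $F=a_d\prod(X-\alpha_iY)$ silently assumes $F(1,0)\neq 0$, a case that cannot be removed by a $\mathrm{GL}(2,\Z)$ change of variables because the constraint $|y|\ge A^{1/d}\Delta$ is not invariant (this last point is minor). As it stands, the central estimate of the lemma — the $A^{1/(d-1)}$ term — is asserted rather than proved.
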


By the inequality \eqref{1024} and by this lemma we deduce 
\begin{equation}
\label{1037}
\vert \mathcal P^{\geq }(B ) \vert = O \left( B^{2/d} \Delta^{2-d} +B^{1/(d-1)}\right). 
\end{equation}
It remains to gather \eqref{defB1}, \eqref{1010}, \eqref{1020} and \eqref{1037}, to choose $\Delta$ of the shape $\Delta = B^{\delta_d}$, where $\delta_d > 0$ is sufficiently small, and to recall that $d \geq 3$ to complete the proof of Proposition \ref{984}. 

\subsubsection{Extension of Proposition \ref{984}} 
We are now interested with the images of points which do not lie on some line with rational coefficients. 

\begin{prop}
\label{984*}
Let $d \geq 3$ and let $F$ and $G$ be two binary forms belonging to ${\rm Bin}(d, \Q)$. Let $\mathbb D_\rho$ ($\rho \in \mathcal{R}_\Q$) be a line on the surface $\mathbb W (F \cap G)$, as defined in part \ref{976} of Proposition \ref{Bois-Sarti}. Then there exists a constant $\alpha <2/d$ such that
\begin{multline*} 
\vert \{m \in \Z : \vert m \vert \leq B, \textup{ there exists } (x, y, z, t) \in \Z^4 \textup{ such that } \\
 (x, y, z, t ) \notin \cup_{\rho \in \mathcal{R}_\Q} \mathbb D_\rho, m= F(x, y) = G(z, t)\} \vert =O(B^\alpha).
\end{multline*}
\end{prop}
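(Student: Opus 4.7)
The plan is to deduce Proposition \ref{984*} from Proposition \ref{984} by controlling the additional integer points that are allowed in \ref{984*} but forbidden in \ref{984}, namely those lying on $\mathbb D_\rho$ for some $\rho \in \mathcal{R}_\C \setminus \mathcal{R}_\Q$. If $\mathcal{R}_\Q = \emptyset$ then $F$ and $G$ are not ${\rm GL}(2, \Q)$--equivalent, and Lemma \ref{intersectionofimages} finishes the proof; so we may assume $\mathcal{R}_\Q \neq \emptyset$.

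Fix an $m$ counted in Proposition \ref{984*} and pick a witnessing $(x, y, z, t) \in \Z^4$ avoiding every $\mathbb D_\rho$ with $\rho \in \mathcal{R}_\Q$. If this point in addition avoids $\mathbb D_\rho$ for every $\rho \in \mathcal{R}_\C$, then $m$ is counted by Proposition \ref{984} and hence lies in a set of cardinality $O(B^\alpha)$. Otherwise $(x, y, z, t) \in \mathbb D_\rho$ for some $\rho \in \mathcal{R}_\C \setminus \mathcal{R}_\Q$. By Lemma \ref{959}, $\mathcal{R}_\C$ is a coset of the finite group ${\rm Aut}(G, \C)$, hence finite; so it will suffice to estimate the contribution of each such $\rho$ separately. (Integer points on lines of type \ref{974} of Proposition \ref{Bois-Sarti} satisfy $F(x, y) = 0$ and therefore contribute only $m = 0$; they are subsumed in Proposition \ref{984}.)

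The crucial step is the following rank bound, elementary but pivotal: for any $\rho \in {\rm GL}(2, \C) \setminus {\rm GL}(2, \Q)$, the sub-$\Z$-module
$$
\Lambda(\rho) := \{(u, v) \in \Z^2 : \rho(u, v) \in \Z^2\}
$$
has rank at most $1$. Indeed, two $\Z$-linearly independent vectors $(x_1, y_1), (x_2, y_2) \in \Lambda(\rho)$ would yield a matrix $M = \left(\begin{smallmatrix} x_1 & x_2 \\ y_1 & y_2 \end{smallmatrix}\right) \in {\rm GL}(2, \Q)$ for which $\rho M$ has integer entries, so $\rho = (\rho M) M^{-1}$ would have rational entries, contradicting $\rho \notin {\rm GL}(2, \Q)$. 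Consequently the integer points on $\mathbb D_\rho$ form a set of shape $\{k \mathbf v : k \in \Z\}$ for some $\mathbf v = (v_1, v_2, v_3, v_4) \in \Z^4$, and the associated values of $m$ are $k^d F(v_1, v_2)$: the bound $|m| \leq B$ leaves $O(B^{1/d})$ values of $m$ when $F(v_1, v_2) \neq 0$, and only $m = 0$ otherwise.

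Combining the bound $O(B^\alpha)$ from Proposition \ref{984} with the $O(B^{1/d})$ contribution of the finitely many $\rho \in \mathcal{R}_\C \setminus \mathcal{R}_\Q$, we obtain a total of $O(B^{\max(\alpha, 1/d)})$, and $\max(\alpha, 1/d) < 2/d$ since $d \geq 3$. The main technical obstacle is the rank bound on $\Lambda(\rho)$; once that is in hand, Proposition \ref{984*} follows by routine case analysis on top of the results already proved.
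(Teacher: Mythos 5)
Your argument is correct and follows essentially the same route as the paper: both deduce the statement from Proposition \ref{984} by showing that, for each of the finitely many $\rho \in \mathcal{R}_\C \setminus \mathcal{R}_\Q$, the integer points on $\mathbb D_\rho$ are confined to a single rational direction, so that the corresponding values $m$ have the form $k^d \cdot \mathrm{const}$ and number $O(B^{1/d})$. Your rank-one lemma for $\Lambda(\rho)$ is a cleaner, more uniform substitute for the paper's case analysis of the $\Q$-linear relations among the entries of $\rho$, but it plays exactly the same role.
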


\begin{proof} 
We identify the isomorphim $\rho$ with a matrix of ${\rm GL}(2, \C)$. We will deduce Proposition \ref{984*} from Proposition \ref{984} as soon as we produce for each $\rho = \begin{pmatrix} a_1 & a_2 \\ a_3 & a_4\end{pmatrix}$ belonging to $\mathcal{R}_\C$, but not to $\mathcal{R}_\Q$, an admissible bound. By hypothesis, there exists at least one entry of the matrix $\rho$ which is not rational, say $a_1$ or $a_2$.
\begin{enumerate}
\item Suppose $\dim_\Q \{\Q + \Q a_1 + \Q a_2\} = 2$. This means that one of the $a_i$ (say $a_2$) can be written as
$$
a_2 = \alpha + \beta a_1,
$$
where $\alpha $ and $\beta$ are rational numbers. The line given by $\rho$ is $(x, y, a_1 x + a_2 y, a_3 x + a_4 y)$. Then the condition $a_1 x + a_2 y \in \Z$ implies $x + \beta y = 0$. The corresponding value $m$ is determined by
$$
m = F(x, y) = F(-\beta y, y) = y^d F(-\beta, 1),
$$
and the number of these $m$ with $\vert m \vert \leq B$ is in $O(B^{1/d}) = O(B^\alpha)$.
\item Suppose $\dim_\Q \{\Q + \Q a_1 + \Q a_2\} = 3$. The condition $a_1 x+ a_2 y\in \Z$ directly implies $x = y = 0$, which leads to the value $m = 0$.
\end{enumerate}
Having covered all cases, the proof is complete.
\end{proof}

\section{\texorpdfstring{From $(G_1, G_2)$ to covering by lattices}{Towards coverings of lattices}} 
We recall Definition \ref{associatedlattice} and Lemma \ref{959}. We consider a more general situation than \eqref{G1G2Dnu} and prove the following crucial proposition, where we exhibit explicit coverings by lattices.

\begin{prop}
\label{proposition6.1}
Let $d \geq 3$ and let $G_1$ and $G_2$ belong to ${\rm Bin}(d, \Z)$ such that $G_1(\Z^2) = G_2(\Z^2)$. Let $\gamma \in {\rm Isom}(G_2 \rightarrow G_1, \Q)$. We then have the equalities
\begin{equation}
\label{claim}
\Z^2 = \bigcup_{\sigma_1 \in {\rm Aut}(G_1, \Q)} L (\gamma \sigma_1) = \bigcup_{\sigma_2 \in {\rm Aut}(G_2, \Q)} L (\sigma_2 \gamma ) , 
\end{equation}
and 
\begin{equation}
\label{claim*}
\Z^2 = \bigcup_{\sigma_1 \in {\rm Aut}(G_1, \Q)} L ( \sigma_1\gamma^{-1}) = \bigcup_{\sigma_2 \in {\rm Aut}(G_2, \Q)} L (\gamma^{-1} \sigma_2 ) .
\end{equation}
\end{prop}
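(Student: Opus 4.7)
The plan is to prove the first equality in \eqref{claim} by a counting argument, and then to derive all four unions in \eqref{claim} and \eqref{claim*} essentially for free.

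Reduction to a single claim: by Lemma \ref{959} applied to $\gamma \in {\rm Isom}(G_2 \to G_1, \Q)$ one has
\[
\{\gamma \sigma_1 : \sigma_1 \in {\rm Aut}(G_1, \Q)\} \ = \ \{\sigma_2 \gamma : \sigma_2 \in {\rm Aut}(G_2, \Q)\} \ = \ {\rm Isom}(G_2 \to G_1, \Q),
\]
so the two unions in \eqref{claim} are literally the same set of lattices. The pair \eqref{claim*} is obtained by running the same argument with $\gamma^{-1} \in {\rm Isom}(G_1 \to G_2, \Q)$ playing the role of $\gamma$. Thus it suffices to prove that $\Z^2$ equals the union of $L(\rho)$ over $\rho \in {\rm Isom}(G_2 \to G_1, \Q)$.

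Suppose for contradiction that some $(u_0, v_0) \in \Z^2$ lies outside every such $L(\rho)$. Since ${\rm Aut}(G_1, \Q)$ is finite by Lemma \ref{possibleAut}, there are only finitely many lattices $L(\rho)$ in play, and Lemma \ref{existenceofC} combined with an LCM produces an integer $C \geq 1$ such that membership in each individual $L(\rho)$ is determined by the residue class modulo $C$. Every $(u,v) \equiv (u_0, v_0) \pmod{C}$ is therefore also outside each $L(\rho)$; I call such a point \emph{bad}. For a bad $(u,v)$ with $m := G_1(u,v) \neq 0$, the hypothesis $G_1(\Z^2) = G_2(\Z^2)$ supplies $(x,y) \in \Z^2$ with $G_2(x,y) = m$, giving an integer point $(u,v,x,y)$ on the surface ${\mathbb W}(G_1 \cap G_2)$. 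Should this point lie on some line ${\mathbb D}_\rho$ of item \eqref{976} of Proposition \ref{Bois-Sarti} with $\rho$ rational, then $(x,y) = \rho(u,v)$ would give $\rho(u,v) \in \Z^2$, i.e.\ $(u,v) \in L(\rho)$, contradicting badness. Hence the point $(u,v,x,y)$ avoids every ${\mathbb D}_\rho$ with $\rho \in {\mathcal R}_\Q$, and so $m$ contributes to the counting set of Proposition \ref{984*}.

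The contradiction is now quantitative. Lemma \ref{879} applied to $G_1$ with the congruence class of $(u_0, v_0)$ modulo $C$ produces, for any $\varepsilon > 0$ and $B$ large, at least $\gg B^{2/d - \varepsilon}$ distinct nonzero values $m$ with $|m| \leq B$, all arising from bad $(u,v)$ and hence all belonging to the set counted by Proposition \ref{984*}. That proposition bounds the same set by $O(B^\alpha)$ for some fixed $\alpha < 2/d$; choosing $\varepsilon$ so that $2/d - \varepsilon > \alpha$ and $B$ sufficiently large yields a contradiction, so no bad $(u_0, v_0)$ exists. The main obstacle is really packaged inside Proposition \ref{984*}; the step requiring the most care in the write-up is the algebraic identification of integer points of ${\mathbb W}(G_1 \cap G_2)$ lying on a rational line ${\mathbb D}_\rho$ with membership in $L(\rho)$, together with the bookkeeping via Lemma \ref{959} needed to recover all four parametrizations in \eqref{claim} and \eqref{claim*}.
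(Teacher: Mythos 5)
Your proof is correct, and it closes the argument by a slightly different (and in fact leaner) mechanism than the paper. The overall frame is the same: reduce all four unions to one via Lemma \ref{959} (the paper does this too, using $G_2 = G_1\circ\gamma^{-1}$), assume a missed point, use Lemma \ref{existenceofC} to promote it to a full congruence class of ``bad'' points, get $\gg B^{2/d-\varepsilon}$ values from Lemma \ref{879}, and contradict an $O(B^{\alpha})$ bound with $\alpha<2/d$. The difference is in how the surface $\mathbb W(G_1\cap G_2)$ is exploited. You observe that for a bad $(u,v)$ and \emph{any} representation $G_2(x,y)=G_1(u,v)\neq 0$, the quadruple $(u,v,x,y)$ cannot lie on a rational line $\mathbb D_\rho$ (that would force $(u,v)\in L(\rho)$), so every such value lands directly in the set bounded by Proposition \ref{984*}; this makes the notion of essentially represented integers unnecessary. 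The paper instead notes that, by Proposition \ref{984*}, most of these values must admit a quadruple lying \emph{on} some rational line, and then invokes Lemma \ref{notunique} (essential representation, via Stewart--Xiao) to transport that on-line representation back to the bad point through an automorphism $\tau_1\in{\rm Aut}(G_1,\Q)$, reaching the same contradiction. Both routes rest on the same external inputs (Lemma \ref{879}, Proposition \ref{Bois-Sarti}, Proposition \ref{984*}/Salberger); yours trades the essential-representation step for the elementary remark that badness is exactly the negation of lying on a rational line, which shortens the proof, while the paper's version makes explicit the dichotomy between on-line and off-line representations that is reused conceptually in the sequels.
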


\begin{proof}
We will concentrate on the proof of the first equality of \eqref{claim} that we write as 
\begin{equation}
\label{firstequation}
\Z^2 = \mathcal L^\cup,
\end{equation}
with
$$
\mathcal L^\cup := \bigcup_{\sigma_1 \in {\rm Aut}(G_1, \Q)} L(\gamma \sigma_1).
$$
The other equalities presented in \eqref{claim} and \eqref{claim*} are consequences of Lemma \ref{959} and of the equality $G_2 = G_1 \circ \gamma^{-1}$.

For the sake of contradiction, suppose that the claim \eqref{firstequation} is false. Let $(c_1, c_2) \in \Z^2$, which does not belong to $\mathcal L^\cup$. The intersection of lattices $\mathcal L^\cap := \bigcap_{\sigma_1 \in {\rm Aut}(G_1, \Q)} L(\gamma \sigma_1)$ is also a lattice. Then, by Lemma \ref{existenceofC}, there exists $C > 1$ such that $C\Z^2 \subseteq \mathcal L^\cap$. Consider the following subset of $\Z^2$
\begin{equation*}
\mathcal E := \{(u, v) \in \Z^2 : u \equiv c_1 \bmod C, v \equiv c_2 \bmod C\}.
\end{equation*}
By construction $\mathcal E$ satisfies
\begin{equation}
\label{emptyintersection}
\mathcal E \cap \mathcal L^\cup = \emptyset.
\end{equation}
This construction of $\mathcal E$, which is a translate of a lattice, shows that many points (a positive proportion, in a natural sense) do not belong to $ \mathcal L^\cup$. The image of $\mathcal E$ by $G_1$ is also a rather dense subset, since by Lemma \ref{879}, we have the asymptotic lower bound
$$
\vert \{m \in \Z : \vert m \vert \leq Z, \, m = G_1(u, v) \text{ for some } (u, v) \in \mathcal E\} \vert \gg_\varepsilon Z^{2/d-\varepsilon}.
$$
Combining with the hypothesis $G_1(\Z^2) = G_2(\Z^2)$, we obtain 
\begin{align}
\label{Salb}
\vert \{m \in \Z : \vert m \vert \leq Z, m = G_1(u, v) = G_2(s, t) \text{ for some } (u, v, s, t) \in \mathcal E \times \Z^2\} \vert \gg_{\varepsilon} Z^{2/d - \varepsilon}.
\end{align}
To shorten notations, we set $\mathcal{R}_\Q := {\rm Isom}(G_2 \rightarrow G_1, \Q)$. We decompose into two (not necessarily disjoint) subsets
\begin{align*}
&\{m : \vert m \vert \leq Z, m = G_1(a, b) \text{ for some } (a, b) \in \Z^2\} \\
&= \{m : \vert m \vert \leq Z, m = G_1(a, b) = G_2(c, d) \text{ for some } (a, b, c, d) \in \Z^4\} \\
&= \{m : \vert m \vert \leq Z, m = G_1(a, b) = G_2(c, d) \text{ for some } (a, b, c, d) \in \left(\cup_{\rho \in \mathcal{R}_\Q} \mathbb D_\rho\right) \cap \Z^4\} \\
&\bigcup \{m : \vert m \vert \leq Z, m = G_1(a, b) = G_2(c, d) \text{ for some } (a, b, c, d) \not\in \left(\cup_{\rho \in \mathcal{R}_\Q} \mathbb D_\rho\right), (a, b, c, d) \in \Z^4\} \\
&:= \mathcal C_1 \bigcup \mathcal C_2, 
\end{align*}
by definition. Proposition \ref{984*} directly gives for some $\alpha < 2/d$
$$
\vert \mathcal C_2 \vert \ll Z ^\alpha.
$$
But this upper bound is less than the lower bound produced by \eqref{Salb}, thus we deduce the following asymptotic lower bound
\begin{multline*} 
\vert \{m : \vert m \vert \leq Z, \text{ there exists } (a, b, c, d) \in ( \cup_{\rho \in \mathcal{R}_\Q} \mathbb D_\rho) \cap \Z^4 \text { and } (u, v, s, t) \in \mathcal E \times \Z^2 \\
\text{ such that } m = G_1(a, b) = G_2(c, d) = G_1(u, v) = G_2(s, t)\} \vert \gg_\varepsilon Z^{2/d - \varepsilon}.
\end{multline*}
We appeal to Lemma \ref{notunique} (applied to the form $F = G_1$) to incorporate the constraint for $m$ to be essentially represented and we obtain the asymptotic lower bound 
\begin{multline*} 
\vert \{m : \vert m \vert \leq Z, m \text{ is essentially represented by } G_1 \\
\text{ and there exists } (a, b, c, d) \in ( \cup_{\rho \in \mathcal{R}_\Q} \mathbb D_\rho) \cap \Z^4 \text { and } (u, v,s, t) \in \mathcal E \times \Z^2 \\
\text{ such that } m = G_1(a, b) = G_2(c, d) = G_1(u, v) = G_2(s, t)\} \vert
\gg_\varepsilon Z^{2/d-\varepsilon}.
\end{multline*} 
For $m \neq 0$, let us consider the equation $m = G_1(a, b) = G_1(u, v)$. The above lower bound implies the existence of some $(u, v) \in \mathcal E$, some $\rho \in \mathcal{R}_\Q$, some $(a, b, c, d) \in \mathbb D_\rho \cap \Z^4$ and some $\tau_1 \in {\rm Aut}(G_1, \Q)$ such that
\begin{equation}
\label{tau(u, v)}
\tau_1(u, v) = (a, b).
\end{equation}
Since $(a, b, c, d) $ belongs to $\mathbb D_\rho \cap \Z^4$, we have $(a, b) = \rho^{-1}(c, d).$ Returning to \eqref{tau(u, v)}, we deduce the equality
$$
\rho \tau_1(u, v) = (c, d) \in \Z^2.
$$
This shows that $(u, v)$ belongs to $L (\rho \tau_1)$, where $\tau_1$ belongs to ${\rm Aut}(G_1, \Q)$. This implies that $(u, v)$ belongs to $\mathcal L^{\cup}$, which contradicts the hypothesis $(u, v) \in \mathcal E$, as written in \eqref{emptyintersection}. Thus the proof of \eqref{firstequation} is complete.
\end{proof}

We arrive at the last stage of the proof of Theorem \ref{source}, but our discussion requires to split the condition $(C1)$ in two distinct subcases according to the divisibility by $3$ or not of the group of automorphisms.

\section{\texorpdfstring{Small automorphism groups $\bf C_1, \bf C_2, \bf C_4, \bf D_1, \bf D_2$}{Small automorphism groups}} 
\label{smallAut}
We prove

\begin{theorem} 
Let $d \geq 3$. Then ${\rm Bin}(d, \Z)$ contains no extraordinary form $F$ such that
\begin{equation}
\label{5groups}
{\rm Aut}(F, \Q) \simeq_{{\rm GL}(2, \Q)} \bf C_1, \, \bf C_2, \, \bf C_4, \, \bf D_1,\, \bf D_2.
\end{equation}
\end{theorem}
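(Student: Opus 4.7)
I would prove this by contradiction, combining the reduction of Section~\ref{FromF1F2toG1G2} with the covering statement of Proposition~\ref{proposition6.1}. Suppose some $F \in {\rm Bin}(d, \Z)$ is extraordinary with ${\rm Aut}(F, \Q) \simeq_{{\rm GL}(2, \Q)} H$ for an $H \in \{\mathbf{C}_1, \mathbf{C}_2, \mathbf{C}_4, \mathbf{D}_1, \mathbf{D}_2\}$. Choosing $F_2 \in {\rm Bin}(d, \Q)$ witnessing extraordinariness and setting $F_1 := F$, the construction of Section~\ref{FromF1F2toG1G2} produces forms $G_1, G_2$ together with a diagonal matrix
\[
\gamma = \begin{pmatrix} D & 0 \\ 0 & D/\nu \end{pmatrix} \in {\rm Isom}(G_2 \rightarrow G_1, \Q),
\]
whose entries satisfy \eqref{conditionsonDnu}, namely $D \mid \nu$, $\nu \leq D^2$ and $D\nu > 1$, and which enjoys the minimality property \eqref{minimalindexforgamma}. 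Moreover ${\rm Aut}(G_1, \Q)$ and ${\rm Aut}(G_2, \Q)$ are both ${\rm GL}(2, \Q)$--conjugate to $H$.

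Proposition~\ref{proposition6.1} provides two coverings of $\Z^2$ by the families $\{L(\gamma \sigma_1)\}_{\sigma_1 \in {\rm Aut}(G_1, \Q)}$ and $\{L(\gamma^{-1} \sigma_2)\}_{\sigma_2 \in {\rm Aut}(G_2, \Q)}$. Pairing $\sigma$ with $-\sigma$ via Lemma~\ref{gamma-gamma} whenever $-{\rm id}$ belongs to the automorphism group, a case-by-case check on the five candidate groups (orders $1, 2, 4, 2, 4$, with $-{\rm id}$ present in all except $\mathbf{C}_1$ and $\mathbf{D}_1$) shows that each family consists of at most two distinct lattices. Lemma~\ref{2lattices}, together with the trivial one-lattice case, then forces at least one lattice in the first covering to equal $\Z^2$: there is $\sigma_0 \in {\rm Aut}(G_1, \Q)$ such that $\gamma \sigma_0$ has integer entries. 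Since $\gamma \sigma_0$ lies in ${\rm Isom}(G_2 \rightarrow G_1, \Q)$, the minimality \eqref{minimalindexforgamma} upgrades this to $[\Z^2 : L(\gamma)] \leq [\Z^2 : L(\gamma \sigma_0)] = 1$, hence $\gamma$ itself has integer entries. This forces $\nu \mid D$, and combined with $D \mid \nu$ we conclude $D = \nu$, so that $\det \gamma = D$.

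The same argument applied to the covering from \eqref{claim*} yields $\sigma_2 \in {\rm Aut}(G_2, \Q)$ such that $\gamma^{-1} \sigma_2$ has integer entries. Because $\sigma_2$ has finite order in ${\rm GL}(2, \Q)$, its determinant belongs to $\{\pm 1\}$, so the positive integer $|\det(\gamma^{-1} \sigma_2)| = 1/\det \gamma = 1/D$ must equal $1$. This forces $D = 1$ and hence $\nu = 1$, in contradiction with $D\nu > 1$. The step I expect to demand the most care is verifying, for each of the five groups $H$, that both coverings contain at most two distinct lattices; this is the only point where the shape of $H$ genuinely enters, and it is precisely what fails once $H = \mathbf{D}_4$ or $H \in \{\mathbf{D}_3, \mathbf{D}_6\}$, as noted in \S\ref{strategy}. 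Once this reduction is established, the minimality of $\gamma$ (used twice) and the short determinant computation close the argument.
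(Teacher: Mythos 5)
Your proposal is correct and follows essentially the same route as the paper: the reduction to $(G_1, G_2, \gamma)$, the coverings from Proposition \ref{proposition6.1}, the pairing $\sigma \leftrightarrow -\sigma$ via Lemma \ref{gamma-gamma} to get at most two lattices, and Lemma \ref{2lattices} to force an integral isomorphism in each direction. The only (harmless) difference is the closing bookkeeping: the paper forces $\vert \det(\gamma\sigma_1)\vert = \vert\det(\sigma_1'\gamma^{-1})\vert = 1$ directly from the area identity \eqref{invarianceofarea}, whereas you invoke the minimality \eqref{minimalindexforgamma} and $\det \sigma_2 = \pm 1$ to reach $D = \nu = 1$, contradicting $D\nu > 1$ — both arguments are valid and of the same nature.
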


\begin{proof} 
For the sake of contradiction, suppose that such an extraordinary form $F$ exists. To follow notations of \S\ref{FromF1F2toG1G2}, we put $F_1 = F$ and we suppose that there exists $F_2$ satisfying \eqref{hypoforF1F2}. Then we construct two forms $G_1$ and $G_2$, two integers $D$ and $\nu$ satisfying \eqref{music} and \eqref{G1G2Dnu}. Now we apply Proposition \ref{proposition6.1} which leads to the two coverings 
$$
\Z^2 =\bigcup_{\sigma_1 \in {\rm Aut}(G_1, \Q)} L(\gamma \sigma_1) \quad \text{ and } \quad \Z^2 = \bigcup_{\sigma_1 \in {\rm Aut}(G_1, \Q)} L (\sigma_1 \gamma^{-1}), 
$$ 
where $\gamma$ is defined in \eqref{DD/nu}. By construction, we know that ${\rm Aut}(G_1, \Q)$ satisfies a conjugacy property similar to \eqref{5groups} and by Lemma \ref{gamma-gamma}, the above covering leads to a covering of $\Z^2$ by at most two lattices (indeed $-{\rm id}$ belongs to ${\bf C}_2$, ${\bf C}_4$, ${\bf D}_2$ and to the ${\rm GL}(2, \Q)$--conjugates of these groups). By Lemma \ref{2lattices}, we deduce that one of these lattices is trivial, so there exist $\sigma_1, \sigma'_1 \in {\rm Aut}(G_1, \Q)$, such that $L(\gamma \sigma_1) = L(\sigma'_1 \gamma^{-1}) = \Z^2$. We now apply Lemma \ref{lemma5.4} to deduce that $\gamma \sigma_1$ and $ \sigma'_1 \gamma^{-1}$ have integer coefficients, in particular their determinants are integers different from zero.

From the equalities $G_1 = G_2 \circ \gamma \sigma_1$ and $G_2 = G_1 \circ \sigma'_1 \gamma^{-1}$ applied to \eqref{invarianceofarea}, we deduce that the areas of the fundamental domains satisfy $A_{G_1} = A_{G_2}$ and $\vert \det(\gamma \sigma_1) \vert = \vert \det(\sigma'_1 \gamma^{-1}) \vert = 1$. We finally obtain that $\gamma \sigma_1$ and $\sigma'_1 \gamma^{-1}$ belong to ${\rm GL}(2, \Z)$ and $G_1$ and $G_2$ are ${\rm GL}(2, \Z)$--equivalent, which contradicts the hypothesis.
\end{proof}

\section{\texorpdfstring{The case of the groups ${\bf C}_3$ and ${\bf C}_6$}{The case of C3 and C6}} 
We now work in the complementary case of \eqref{5groups} in $(C1)$. In other words, we will prove Theorem \ref{source} in the situation where ${\rm Aut}(F_1, \Q) \simeq_{\rm{GL}(2, \Q)} {\bf C}_3$ or ${\bf C}_6$. But $-{\rm id}$ belongs to ${\bf C}_6$. So by Lemma \ref{gamma-gamma}, we have only to consider the ${\bf C}_3$ case when studying the coverings given by Proposition \ref{proposition6.1}. Recall that $\gamma$ defined in \eqref{DD/nu} is such that $L (\gamma)$ has minimal index among the lattices appearing in the right--hand sides of \eqref{claim} and \eqref{claim*}. This is a consequence of \eqref{minimalindexforgamma}. Recall also the relations \eqref{music}, \eqref{G1G2Dnu}, \eqref{G1= G2ogamma} and \eqref{conditionsonDnu}. Our purpose is to prove

\begin{equation}
\label{Asuccess}
D = 2, \nu = 2 \text{ and } {\rm Aut}(G_2, \Q) \subset {\rm GL}(2, \Z),
\end{equation}
under the above hypotheses. We recognize the second case of Theorem \ref{source}. We separate our proof according to the value of the index $[\Z^2 : L(\gamma)]$.

\subsection{\texorpdfstring{Case 1: $[\Z^2 : L (\gamma)] =1$}{Case 1}}
\label{Section11.1}
Write $\tau$ for a non-trivial element of ${\rm Aut}(G_2, \Q)$. So we have
\begin{equation}
\label{idtautau2}
{\rm Aut}(G_2, \Q) = \{{\rm id}, \tau, \tau^2\}.
\end{equation}
We deduce that $L(\gamma^{-1})$, $L( \gamma^{-1} \tau) $ and $L( \gamma^{-1} \tau^2)$ are proper sublattices of $\Z^2$, since otherwise $G_1$ and $G_2$ would be ${\rm GL}(2, \Z)$--equivalent, by the argument made at the end of \S \ref{smallAut}. Define the lattices $\Lambda_0$, $\Lambda_1$ and $\Lambda_2$ as in Lemma \ref{3lattices}. Since $L(\gamma^{-1})$, $L( \gamma^{-1} \tau) $ and $L( \gamma^{-1} \tau^2)$ are three proper lattices of $\Z^2$ and since \eqref{claim*} is a covering by three lattices, we deduce, from Lemma \ref{3lattices} the equality between two sets of three lattices
\begin{equation}
\label{3lattices=3lattices}
\{L(\gamma^{-1}), L (\gamma^{-1} \tau), L (\gamma^{-1} \tau^2)\} = \{\Lambda_0, \Lambda_1, \Lambda_2\}.
\end{equation}
By the Definition \ref{associatedlattice} and formula \eqref{DD/nu}, we have the equality
$$
L(\gamma^{-1}) = \{(x, y) : x\equiv 0 \bmod D, \, \nu y \equiv 0 \bmod D \},
$$
and by \eqref{3lattices=3lattices}, the only possibility is $L(\gamma^{-1}) = \Lambda_0$, so $D =2$ and $2 \mid \nu$. Since $L(\gamma) = \Z^2$, we get the unique possibility $(D, \nu) = (2, 2)$, and \eqref{3lattices=3lattices} becomes
\begin{equation}
\label{2lattices=2lattices}
\{L(\gamma^{-1} \tau), L(\gamma^{-1} \tau^2)\} = \{\Lambda_1, \Lambda_2\}.
\end{equation}
Returning to \eqref{G1G2Dnu}, we have
$$
G_1(X, 2Y) = G_2(2X, 2Y),
$$
that we write as 
$$
G_1 = G_2 \circ \gamma,
$$
with 
$$
\gamma = \begin{pmatrix} 2 & 0 \\ 0 & 1\end{pmatrix}.
$$
Write $\tau$, appearing in \eqref{idtautau2}, as
\begin{equation}
\label{taucanonical}
\tau = \frac 1a \begin{pmatrix} a_1& a_2 \\ a_3 & a_4\end{pmatrix},
\end{equation}
in the canonical form (see Lemma \ref{canonicalmatrix}), since $\det \tau = 1 = (a_1 a_4- a_2 a_3)/a^2$. We compute explicitely the two matrices
$$
\gamma^{-1} \tau = \frac{1}{a} \begin{pmatrix} a_1/2 & a_2/2 \\ a_3 & a_4 \end{pmatrix}
\text{ and } 
\gamma^{-1} \tau^2 = \frac{1}{a} \begin{pmatrix} a_4/2 & -a_2/2 \\ -a_3 & a_1\end{pmatrix},
$$
that we write in canonical form as
\begin{equation}
\label{twocanonicalforms}
\gamma^{-1} \tau =\frac 1{2a/s} \begin{pmatrix} a_1/s & a_2/s \\ 2a_3/s &2 a_4 /s\end{pmatrix}
\text{ and } 
\gamma^{-1} \tau^2 =\frac 1{2a/s'} \begin{pmatrix} a_4/s' & -a_2/s' \\ -2a_3/s' & 2a_1/s'\end{pmatrix},
\end{equation}
where $s := {\rm gcd}(a_1, a_2, 2a_3, 2a_4)$ and $s' := {\rm gcd}(2a_1, a_2, 2a_3, a_4)$. Since we also have the equality ${\rm gcd}(a_1, a_2, a_3, a_4) = 1$, we deduce that the integers $s$ and $s'$ satisfy the property
\begin{equation}
\label{ss=12}
s, s' \in \{1, 2\}.
\end{equation}
Thanks to the canonical forms \eqref{twocanonicalforms}, we deduce from Lemma \ref{lemma5.10} the equalities
$$
\vert \det L(\gamma^{-1} \tau) \vert = 2 a/s \quad \text{ and } \quad \vert \det L(\gamma^{-1} \tau^2) \vert = 2a/s'.
$$
However, on the right--hand side of \eqref{2lattices=2lattices}, the two lattices have a determinant equal to $2$. So we have the equalities
$$
2a = 2s = 2s'.
$$
By the possible values of $s$ and $s'$ given in \eqref{ss=12}, we are led to split our discussion into two cases.
\vskip .3cm
\noindent $\bullet$ If $s = s' = a = 2$. By the definition of $s$ and $s'$ and by the equality $\gcd(a_1, a_2, a_3, a_4) =1$ we see that this case happens when
$$
\begin{cases}
a_1 \equiv a_2\equiv a_4 \equiv 0 \bmod 2, \\
a_3 \equiv 1 \bmod 2.
\end{cases}
$$
With these conditions, we see that the matrices $\gamma^{-1} \tau$ and $\gamma^{-1} \tau^2$ (see \eqref{twocanonicalforms}) have the shape
$$
\gamma^{-1} \tau = 
\begin{pmatrix}
* & * \\
m + \frac{1}{2} & n
\end{pmatrix}
\text{ and }
\gamma^{-1} \tau^2 = 
\begin{pmatrix}
* & * \\
-m - \frac{1}{2} & n'
\end{pmatrix},
$$
where $m$, $n$ and $n'$ are integers and where $*$ are unspecified numbers. We check that the point $(1, 1)$ does not belong to $L(\gamma^{-1} \tau) \cup L(\gamma^{-1} \tau^2)$ but it belongs to $\Lambda_1 \cup \Lambda_2$. This contradicts \eqref{2lattices=2lattices}. So this case never happens.
\vskip .3cm
\noindent $\bullet$ If $s = s' = a = 1$. Returning to \eqref{taucanonical}, we see that $\tau$ belongs to ${\rm GL}(2, \Z)$ and by \eqref{idtautau2}, ${\rm Aut}(G_2, \Q)$ is a subgroup of $ {\rm GL}(2, \Z)$. This proves \eqref{Asuccess} in that case.

\subsection{\texorpdfstring{Case 2: $[\Z^2 : L (\gamma)] \geq 2$}{Case 2}} 
\label{Section11.2}
Recall that $\gamma$ is defined in \eqref{DD/nu} and that ${\rm Aut}(G_2, \Q)$ is defined by \eqref{idtautau2}. Similarly, we write
$$
{\rm Aut}(G_1, \Q) = \{{\rm id}, \sigma, \sigma^2\}.
$$
So we have 
$$
{\rm Isom}(G_2\rightarrow G_1 , \Q) = \{\gamma, \gamma \sigma, \gamma\sigma^2\} \quad \text{ and } \quad {\rm Isom}(G_1\rightarrow G_2, \Q) = \{\gamma^{-1}, \gamma^{-1} \tau, \gamma^{-1} \tau^2\}.
$$
All the lattices $L(\gamma)$, $L(\gamma \sigma)$, $L(\gamma\sigma^2)$, $L(\gamma^{-1})$, $L(\gamma^{-1} \tau)$ and $L(\gamma^{-1} \tau^2)$ are proper sublattices, since by construction all their index are $\geq [\Z^2 : L(\gamma)] \geq 2$ (see \eqref{minimalindexforgamma}). A combination of Proposition \ref{proposition6.1} and Lemma \ref{3lattices} gives the two equalities between sets of three lattices
$$ 
\{L(\gamma), L(\gamma \sigma), L(\gamma\sigma^2)\} = \{\Lambda_0, \Lambda_1, \Lambda_2\}, 
$$
and 
\begin{equation}
\label{1386} 
\{L(\gamma^{-1}), L(\gamma^{-1} \tau), L(\gamma^{-1}\tau^2)\} = \{\Lambda_0, \Lambda_1, \Lambda_2\}.
\end{equation}
These equalities imply that
$$
L(\gamma), L(\gamma^{-1}) \in \{\Lambda_0, \Lambda_1, \Lambda_2\}.
$$
The definition \eqref{DD/nu} of $\gamma$ implies that $L(\gamma)$ and $L(\gamma^{-1})$ are distinct from $\Lambda_2$, so we have
$$
L(\gamma), L(\gamma^{-1}) \in \{\Lambda_0, \Lambda_1\}.
$$
We again use the explicit definition of $L (\gamma)$ and $L (\gamma^{-1})$ and the conditions \eqref{conditionsonDnu} to deduce that we necessarily have
$$
(D, \nu) = (2, 4).
$$
We now show that this configuration is impossible as follows. With the above value of $(D, \nu)$ we have
$$
\gamma = \begin{pmatrix} 2 & 0 \\ 0 & 1/2 \end{pmatrix}.
$$
Write $\tau$ in the canonical form
$$
\tau = \frac{1}{a} \begin{pmatrix} a_1 & a_2 \\ a_3 & a_4 \end{pmatrix} \text{ with } a \geq 1, a^2 = \pm (a_1a_4 -a_2a_3) \text{ and }
{\rm gcd}(a_1,a_2,a_3,a_4) =1, 
$$
and we compute
\begin{equation}
\label{explicit1}
\gamma^{-1} \tau = \frac{1}{a} \begin{pmatrix} a_1/2 & a_2/2 \\ 2a_3 & 2a_4 \end{pmatrix} \text{ and }\gamma^{-1} \tau^2 = \gamma^{-1} \tau^{-1}=
\frac{1}{a} \begin{pmatrix} a_4/2 & -a_2/2 \\ -2a_3& 2 a_1 \end{pmatrix}.
\end{equation}
We write these two matrices in their canonical form
$$
\gamma^{-1} \tau = \frac 1{2a/t} \begin{pmatrix} a_1/t & a_2/t \\ 4a_3/t & 4a_4/t\end{pmatrix} \text{ with } t = {\rm gcd}(a_1, a_2, 4a_3, 4a_4),
$$
and
$$
\gamma^{-1} \tau^2 = \gamma^{-1} \tau^{-1} = \frac{1}{2a/t'} 
\begin{pmatrix} 
a_4/t' & -a_2/t' \\ 
-4a_3/t' & 4a_1/t'
\end{pmatrix} 
\text{ with } t ' = {\rm gcd}(4a_1, a_2, 4a_3, a_4).
$$
We necessarily have $t, t' \in \{1, 2, 4\}$. Using \eqref{1386} and Lemma \ref{lemma5.10} to compute the index, we obtain the equalities
$$ 
\frac{(2a/t)^2}{{\rm gcd}(2a/t, 4a^2/t^2)} = \frac{(2a/t')^2}{{\rm gcd}(2a/t', 4a^2/t'^2)} = 2
$$ 
which reduce to 
$$
2a/t = 2a/t' = 2,
$$
which gives finally 
$$
a = t = t' \in \{1, 2, 4\}.
$$
We investigate three cases
\vskip .2cm
\noindent $\bullet$ {\it The case $a = 4$ is excluded}. Suppose that $a = 4$. From the equalities 
$$
4 = \gcd(a_1, a_2, 4 a_3, 4a_4) = \gcd(4a_1, a_2, 4a_3, a_4), 
$$
we deduce that $a_1$, $a_2$ and $a_4$ are even, and from the coprimality of the $a_i$, we know that $a_3$ is odd. We now exploit the explicit expressions of the lattices $L (\gamma^{-1})$, $L(\gamma^{-1} \tau) $ and $L(\gamma^{-1} \tau^2)$ (see \eqref{explicit1}). We check that the point $(1, 0)$ belongs to none of these three lattices since $2a_3/4$ does not belong to $\Z$. This contradicts \eqref{1386}.
\vskip .2cm
\noindent $\bullet$ {\it The case $a = 2$ is excluded}. Suppose that $a = 2$. As above, we know that $a_1$, $a_2$ and $a_4$ are even, but $a_3$ is odd. The point $(0,1)$ belongs to $L(\gamma^{-1})$ but it does not belong to $\Lambda_1 \cup \Lambda_2$. By the equality \eqref{1386}, we deduce that $(0, 1)$ does not belong to $L(\gamma^{-1} \tau) \cup L(\gamma^{-1} \tau^2)$. The explicit expression of $\gamma^{-1} \tau$ and $\gamma^{-1} \tau^2$ leads to the conclusion that we necessarily have $a_2\equiv 2 \bmod 4$. However, we have the equality
$$
\vert a_1a_4 - a_2a_3 \vert = a^2 =4.
$$
Therefore we arrive at a contradiction since we proved the congruences $a_1 \equiv a_4 \equiv 0 \bmod 2$, $a_3 \equiv 1 \bmod 2$ and $a_2 \equiv 2 \bmod 4$.
\vskip .2cm
\noindent $\bullet$ {\it The case $a = 1$ is excluded}. Suppose that $a = 1$. Then the matrix $\tau$ belongs to ${\rm GL}(2, \Z)$. We apply Lemma \ref{433} with the choice $F = G_2$ to obtain the equivalence of the two forms
\begin{equation}
\label{2214}
G_2(2X, Y) \sim_{{\rm GL}(2, \Z)} G_2(X, 2Y).
\end{equation}
The equality \eqref{G1G2Dnu}, with $D = 2$ and $\nu = 4$, gives the equality
$$
G_1(X, 4Y) = G_2(2X, 2Y),
$$ 
that we write as 
$$
G_1(X, 2Y) = G_2(2X, Y),
$$
which combined with \eqref{2214} leads to
$$
G_1(X, 2Y) \sim_{{\rm GL}(2, \Z)} G_2(X, 2Y),
$$
which leads to $G_1 \sim_{{\rm GL}(2, \Z)} G_2$, which is contrary to \eqref{G1G2Dnu}. 
\par 
Thus we investigated the three possible values of $a$ and we conclude that in Case 2, the equalities \eqref{G1G2Dnu} have no solution. The proof of \eqref{Asuccess} is complete, and hence also the proof of Theorem~\ref{source}.

\section{Coprime value sets}
\label{sCoprime}
Throughout this section we will assume the truth of Theorem \ref{source}, Theorem \ref{A} and Theorem \ref{B}. We recall some notation. We defined $W(F) := \{F(x, y) : x, y \in \Z, \gcd(x, y) = 1\}$ and we write $\sim_{\text{co}}$ for the resulting equivalence relation, i.e. $F \sim_{\text{co}} G$ if $W(F) = W(G)$. We also recall the inclusions (\ref{eCoprimeValue})
\[
[F]_{\GL} \subseteq [F]_{\text{co}} \subseteq [F]_{\text{val}}.
\]
In order to prove Theorem \ref{tCoprimeValue}, it suffices to show that

\begin{theorem}
Let $d \geq 3$ and let $F, G \in {\rm Bin}(d, \Q)$. Suppose that $F \sim_{\textup{co}} G$. Then we have $F \sim_{\GL} G$.
\end{theorem}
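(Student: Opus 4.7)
The plan is a proof by contradiction, leveraging the main theorems of the paper. Since the inclusion $[F]_{\text{co}} \subseteq [F]_{\text{val}}$ of \eqref{eCoprimeValue} is automatic from homogeneity, I may suppose that $F \sim_{\text{co}} G$ but $F \not\sim_{\GL} G$ and derive a contradiction. Invoking Theorem \ref{source} together with Theorems \ref{A} and \ref{B} (to cover the conditions $(C2)$ and $(C3)$), and possibly swapping the roles of $F$ and $G$ via the Remark following Theorem \ref{source}, one obtains forms $G_1 \sim_{\GL} F$ and $G_2 \sim_{\GL} G$ in ${\rm Bin}(d, \Z)$ satisfying $G_1(X, 2Y) = G_2(X, Y)$, $3 \mid |{\rm Aut}(G_1, \Q)|$, and the $3$-torsion $\{\sigma \in {\rm Aut}(G_1, \Q) : \sigma^3 = {\rm id}\}$ sitting inside $\GL$. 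Since $\GL$ permutes the primitive vectors of $\Z^2$, coprime value sets are $\GL$-invariant, so the target reduces to showing $W(G_1) \neq W(G_2)$.

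The second step is a structural decomposition of $W(G_2)$. Writing $W(G_2) = \{G_1(u, 2v) : \gcd(u, v) = 1\}$ and splitting on the parity of $u$, one lands in $W(G_1)$ when $u$ is odd (since $(u, 2v)$ is then coprime) and in $2^d W(G_1)$ when $u$ is even and $v$ odd (since $G_1(u, 2v) = 2^d G_1(u/2, v)$ with $(u/2, v)$ coprime). The parity restrictions are harmless: the integer order-$3$ element $R_0 \in {\rm Aut}(G_1, \Q) \cap \GL$ acts on $\mathbb{F}_2^2$ as a $3$-cycle on its three non-zero residues, so the $R_0$-orbit of any primitive vector contains coprime representatives of every non-zero parity class. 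One thus obtains the clean identity $W(G_2) = W(G_1) \cup 2^d W(G_1)$, reducing the problem to showing that $W(G_1)$ is not closed under multiplication by $2^d$.

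To finish, I would pick coprime $(a, b)$ with $b$ odd and $|G_1(a, b)|$ large enough that $m := 2^d G_1(a, b) = G_1(2a, 2b)$ is essentially represented by $G_1$; such pairs are abundant by combining Lemmas \ref{879} and \ref{notunique}. Any coprime $(x, y)$ with $G_1(x, y) = m$ must take the form $(x, y) = 2 \sigma(a, b)$ for some $\sigma \in {\rm Aut}(G_1, \Q)$, which is impossible when $\sigma \in \GL$ since then $(x, y) \in 2\Z^2$. The main obstacle is the remaining case $\sigma \notin \GL$, which by Lemma \ref{possibleAut} can only arise when ${\rm Aut}(G_1, \Q)$ is ${\rm GL}(2, \Q)$-conjugate to $\mathbf{D}_3$ or $\mathbf{D}_6$. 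For this I would argue, using Lemma \ref{sigma<->C3} to normalize the order-$3$ element to the standard $R$, that $\sigma$ decomposes as $\sigma = k_0 (S z) k_0^{-1}$ with $k_0 \in \GL$ and $z = \bar h / h$ for some $h = p + q \omega \in \Q(\omega)^\times$ in the centralizer of $R$; the canonical denominator of $\sigma$ then divides the rational norm $p^2 - pq + q^2$, which is always odd for $\gcd(p, q) = 1$. Writing $\sigma = \frac{1}{N} M$ canonically via Lemma \ref{canonicalmatrix} with $N$ odd, the integrality of $2 \sigma(a, b)$ forces $N \mid M(a, b)$ (as $\gcd(N, 2) = 1$), hence $2 \sigma(a, b) \in 2 \Z^2$, again failing coprimality. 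Therefore $m \notin W(G_1)$, producing an element of $W(G_2) \setminus W(G_1)$ and contradicting $W(G_1) = W(G_2)$.
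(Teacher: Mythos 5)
Your proposal is correct, and while its opening reduction coincides with the paper's (both invoke Theorems \ref{source}, \ref{A} and \ref{B} to reduce, after swapping and $\GL$--equivalences, to $G_2(X,Y)=G_1(X,2Y)$ with the $3$--torsion of ${\rm Aut}(G_1,\Q)$ integral), the way you finish is genuinely different. The paper never analyses which value fails to be common: it bootstraps, using the integral order--$3$ automorphism only to adjust parities, an infinite sequence of primitive representations $F(x_N,y_N)=2^{(N-1)d}G(x_0,y_0)$ and then kills the situation with the Lewis--Mahler finiteness theorem for Thue--Mahler equations \cite{LM}. You instead prove the clean identity $W(G_2)=W(G_1)\cup 2^dW(G_1)$ (your mod--$2$ orbit argument for the reverse inclusions is right, since an integral order--$3$ matrix reduces to a $3$--cycle in ${\rm GL}(2,\mathbb F_2)$) and then exhibit one explicit element of $W(G_2)\setminus W(G_1)$, namely an essentially represented value $2^dG_1(a,b)$; this costs you two extra ingredients. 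First, the counting input: Lemma \ref{879} as stated carries congruence conditions but no coprimality, so you should either adapt its (identical) proof or argue directly with Lemma \ref{EverBomSch} that coprime pairs with $b$ odd in a box already produce $\gg Z^{2/d-\varepsilon}$ distinct values, which then beats the $O(Z^\alpha)$ of Lemma \ref{notunique}; this is a routine repair, not a gap. Second, the structural fact absent from the paper: that in the ${\bf D}_3$/${\bf D}_6$ cases every non--integral automorphism is a reflection $S\bar h/h$ whose reduced denominator divides the odd norm $p^2-pq+q^2$ — your Hilbert~90 computation does establish this, and it is exactly what makes $2\sigma(a,b)$ land in $2\Z^2$ and destroys primitivity. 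In exchange, your route avoids importing the Thue--Mahler theorem, reusing instead the Salberger-based essential-representation machinery already developed for Proposition \ref{proposition6.1}, and it yields more precise information (the decomposition of $W(G_2)$ and a concrete witness), whereas the paper's iteration is shorter and needs no control whatsoever on the non-integral automorphisms.
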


\begin{proof}
Suppose that $F \not \sim_{\GL} G$. Since we certainly have that $F \sim_{\text{val}} G$, it follows from Theorem \ref{source}, Theorem \ref{A} and Theorem \ref{B} that, upon interchanging $F$ and $G$ and replacing them by $\GL$--equivalent forms if necessary, we have
$$
G(X, Y) = F(X, 2Y).
$$
Furthermore, ${\rm Aut}(F)$ has an element of order $3$ contained in $\GL$. We now fix any tuple $(x_0, y_0)$ with $x_0$ even, $y_0$ odd, $\gcd(x_0, y_0) = 1$ and $G(x_0, y_0) \neq 0$. By our assumption $W(F) = W(G)$, we may find some $(x_1, y_1)$ with $\gcd(x_1, y_1) = 1$ such that
\[
F(x_1, y_1) = G(x_0, y_0).
\]
Since ${\rm Aut}(F)$ has an element of order $3$ contained in $\GL$, we may use the argument from Lemma \ref{433} to ensure that $y_1$ is odd (by changing, if necessary, the pair $(x_1, y_1)$ to $\sigma(x_1, y_1)$ for a suitable $\sigma \in {\rm Aut}(F) \cap \GL$ of order $3$). We then consider
\[
G(2x_1, y_1) = F(2x_1, 2y_1) = 2^d F(x_1, y_1) = 2^d G(x_0, y_0),
\]
which is primitively represented by $G$. But then it must also be primitively represented by $F$, so we find some $(x_2, y_2)$ with $\gcd(x_2, y_2) = 1$ and $y_2$ odd such that
\[
F(x_2, y_2) = G(2x_1, y_1).
\]
We look at
\[
G(2x_2, y_2) = F(2x_2, 2y_2) = 2^d F(x_2, y_2) = 2^d G(2x_1, y_1) = 2^{2d} G(x_0, y_0).
\]
Continuing in this way, we obtain a sequence of pairs $(x_N, y_N)_{N \geq 1}$ such that
\[
F(x_N, y_N) = 2^{(N - 1) d} G(x_0, y_0), \quad \gcd(x_N, y_N) = 1.
\]
This gives infinitely many solutions to the Thue--Mahler equation \cite{LM}, contradiction.
%
\end{proof}

\section{Value sets with multiplicities} 
\label{sMulti}
In this section we prove Theorem \ref{tMultiValue}, which is equivalent to the following

\begin{theorem}
\label{1867}
Let $d \geq 3$ and let $F, G \in {\rm Bin}(d, \Z)$. Suppose that $F \sim_{\textup{mu}} G$. Then we have $F \sim_{\GL} G$.
\end{theorem}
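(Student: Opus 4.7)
The plan is to argue by contradiction, reduce the pair $(F, G)$ to the explicit normal form provided by our structural results, and derive the contradiction from a one-line count of representations.

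Suppose $F \sim_{\rm mu} G$ but $F \not\sim_{\GL} G$. Since $\sim_{\rm mu}$ trivially implies $\sim_{\rm val}$, the form $F$ is extraordinary, so Corollary~\ref{illustration2} applies (extended under the conditions $(C2)$ and $(C3)$ by Theorems~\ref{A} and~\ref{B}). It produces a form $G_0 \in [F]_{\rm val}$ with $\{\sigma \in {\rm Aut}(G_0, \Q) : \sigma^3 = {\rm id}\} \subseteq {\rm GL}(2, \Z)$ and the decomposition
$$[F]_{\rm val} = [G_0]_{\GL} \cup [H_0]_{\GL}, \quad H_0(X, Y) := G_0(2X, Y),$$
into two distinct $\GL$-classes. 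Since $F$ and $G$ lie in $[F]_{\rm val}$ but are not $\GL$-equivalent, one belongs to $[G_0]_{\GL}$ and the other to $[H_0]_{\GL}$. After possibly swapping them, which preserves $\sim_{\rm mu}$, and replacing each by a $\GL$-equivalent representative, which preserves $R(\cdot\,; m)$ for every $m$ since ${\rm GL}(2, \Z)$ acts bijectively on $\Z^2$, I would reduce to the clean hypothesis
$$G(X, Y) = F(2X, Y).$$

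The decisive observation is then the identity, valid for every nonzero integer $m$,
$$R(G; m) = |\{(x, y) \in \Z^2 : F(2x, y) = m\}| = |\{(X, Y) \in 2\Z \times \Z : F(X, Y) = m\}|,$$
coming from the bijection $(x, y) \mapsto (2x, y)$ between $\Z^2$ and $2\Z \times \Z$. Thus $R(F; m) = R(G; m)$ forces every integer representation of $m$ by $F$ to have an even first coordinate, for every $m \neq 0$.

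This is easily contradicted: since $F \neq 0$, the single-variable polynomial $F(1, Y) \in \Z[Y]$ is not identically zero and hence has only finitely many integer roots. Picking any $y_0 \in \Z$ with $m_0 := F(1, y_0) \neq 0$, the pair $(1, y_0)$ represents $m_0$ with odd first coordinate, the desired contradiction. The only substantive step here is the reduction to the normal form $G(X, Y) = F(2X, Y)$, which rests entirely on Corollary~\ref{illustration2} and its analogues in Theorems~\ref{A} and~\ref{B}; once that rigid shape is in hand, the multiplicity refinement is disposed of by the trivial parity argument above, so no new technical obstacle arises in this section.
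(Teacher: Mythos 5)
Your proposal is correct and follows essentially the same route as the paper: reduce via the structural results (Theorem \ref{source}/Corollary \ref{illustration2} together with Theorems \ref{A} and \ref{B}) to the normal form $G(X,Y)=F(2X,Y)$ (the paper uses the transposed $G(X,Y)=F(X,2Y)$), and then note that $R(G;m)$ counts only those representations of $m$ by $F$ with an even coordinate, so equality $R(F;m)=R(G;m)$ is contradicted by any representation with that coordinate odd. The paper phrases this as an injection $(x,y)\mapsto(x,2y)$ missing a point $(x_0,y_0)$ with $y_0$ odd, while you phrase it as forced set equality plus the point $(1,y_0)$; these are the same argument.
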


\begin{proof} 
The beginning of the proof mimics what is contained in \S \ref{sCoprime}.
Suppose that $F \not \sim_{\GL} G$. Since we certainly have that $F \sim_{\text{val}} G$, it follows from Theorem \ref{source}, Theorem \ref{A} and Theorem \ref{B} that, upon interchanging $F$ and $G$ and replacing them by $\GL$--equivalent forms if necessary, we have
$$
G(X, Y) = F(X, 2Y).
$$
Therefore we have $R(F; m) = R (G; m)$ by hypothesis. 

Take integers $x_0$ and $y_0$ with $y_0$ odd and with $m := F(x_0, y_0) \neq 0$. We claim that $m$ is represented more frequently by $F$ than by $G$. Indeed, we have an injective map $\varphi$
\[ \varphi :
\{(x, y): G(x, y) = m\} \rightarrow \{(x, y) : F(x, y) = m\}
\]
given by sending $(x, y)$ to $(x, 2y)$. Since $(x_0, y_0)$ is not in the image of $\varphi$, we have proven the claim, and thus the theorem.
\end{proof}

The equivalence relation $\sim_{\rm mu}$ is very restrictive when compared with $\sim_{\rm val}$ and one may wonder if there exists a more direct proof of Theorem \ref{1867}.

\end{document}